\newtheorem{thm}{Theorem}[section]
\newtheorem{defn}[thm]{Definition}
\newtheorem{prop}[thm]{Proposition}
\newtheorem{cor}[thm]{Corollary}
\newtheorem{lem}[thm]{Lemma}
\newtheorem{rem}[thm]{Remark}
\def \CPP{CPP-}
\def \UPO{UPO-}
\numberwithin{equation}{subsection}
\begin{document}

\title{Combinatorial characterization of upward planarity}

\author[a,b]{Xuexing Lu}

\author[a,b]{Yu Ye}

\affil[a]{\small School of Mathematical Sciences, University of Science and Technology of China, Hefei, China}
\affil[b]{Wu Wen-Tsun Key Laboratory of Mathematics, Chinese Academy of Sciences, Hefei, China}

\renewcommand\Authands{ and }
\maketitle

\begin{abstract}

We give a combinatorial characterization of upward planar graphs in terms of upward planar orders, which are special linear extensions of edge posets.

\end{abstract}

\text{\emph{Keywords}: upward planar graph,  planar $st$ graph, upward planar order}

\text{\emph{Mathematics Subject Classification (2010)}: 	05C10, 06A99 }

\tableofcontents
\section{Introduction}
A planar drawing of a directed graph is \emph{upward} if all edges increase monotonically in the vertical direction (or other fixed direction).
A directed graph is called \emph{upward planar} if it admits an upward planar drawing; see Fig. $1$ for an example. Clearly, an upward planar graph is necessarily acyclic.  A directed graph together with an upward planar drawing is called an \emph{upward plane graph}. They are commonly used to represent hierarchical structures, such as PERT networks, Hasse diagrams, family trees, etc, and have been extensively studied in the fields of graph theory, graph drawing algorithms, and ordered set theory (see, e.g., \cite{[GT95]} for a review).

\begin{center}
\begin{tikzpicture}[scale=0.25]

\node (v2) at (0.5,0.5) {};
\node (v1) at (-3,-3) {};
\node (v3) at (3,-1.5) {};
\node (v4) at (7,2) {};
\node (v8) at (0.5,-2) {};
\node (v5) at (-0.5,-7.5) {};
\node (v6) at (3.5,-5.5) {};
\node (v7) at (6,-8.5) {};
\draw  (-3,-3) -- (0.5,0.5)[postaction={decorate, decoration={markings,mark=at position .5 with {\arrow[black]{stealth}}}}];
\draw  (3,-1.5) -- (0.5,0.5)[postaction={decorate, decoration={markings,mark=at position .5 with {\arrow[black]{stealth}}}}];
\draw  (3,-1.5) -- (7,2)[postaction={decorate, decoration={markings,mark=at position .5 with {\arrow[black]{stealth}}}}];
\draw  (-0.5,-7.5) -- (3.5,-5.5)[postaction={decorate, decoration={markings,mark=at position .5 with {\arrow[black]{stealth}}}}];
\draw   (6,-8.5) -- (3.5,-5.5)[postaction={decorate, decoration={markings,mark=at position .5 with {\arrow[black]{stealth}}}}];
\draw  (-0.5,-7.5) -- (-3,-3)[postaction={decorate, decoration={markings,mark=at position .5 with {\arrow[black]{stealth}}}}];
\draw  (-0.5,-7.5) -- (0.5,-2)[postaction={decorate, decoration={markings,mark=at position .5 with {\arrow[black]{stealth}}}}];
\draw  (3.5,-5.5)-- (0.5,-2)[postaction={decorate, decoration={markings,mark=at position .5 with {\arrow[black]{stealth}}}}];
\draw   (6,-8.5) -- (7,2)[postaction={decorate, decoration={markings,mark=at position .5 with {\arrow[black]{stealth}}}}];
\node (v9) at (2,-11) {};
\draw  (2,-11) --  (6,-8.5)[postaction={decorate, decoration={markings,mark=at position .5 with {\arrow[black]{stealth}}}}];
\draw  (2,-11) -- (-0.5,-7.5)[postaction={decorate, decoration={markings,mark=at position .5 with {\arrow[black]{stealth}}}}];
\draw  (3.5,-5.5) -- (7,2)[postaction={decorate, decoration={markings,mark=at position .5 with {\arrow[black]{stealth}}}}];
\draw[fill] (v1) circle [radius=0.2];
\draw[fill] (v2) circle [radius=0.2];
\draw[fill] (v3) circle [radius=0.2];
\draw[fill] (v4) circle [radius=0.2];
\draw[fill] (v5) circle [radius=0.2];
\draw[fill] (v6) circle [radius=0.2];
\draw[fill] (v7) circle [radius=0.2];
\draw[fill] (v8) circle [radius=0.2];
\draw[fill] (v9) circle [radius=0.2];

\end{tikzpicture}

Figure $1$. An upward planar graph.
\end{center}

A first simple characterization of upward planarity was given independently by Di Battista, Tamassia \cite{[BT88]} and Kelly \cite{[Ke87]}.  They characterized upward planar graphs as spanning subgraphs of \emph{planar $st$ graphs}, as shown in Fig. $2$, where a planar $st$ graph is a directed planar graph with exactly one source $s$, exactly one sink $t$ and a distinguished edge $e$ connecting $s$, $t$.
\begin{center}
\begin{tikzpicture}[scale=0.25]
\node (v2) at (0.5,0.5) {};
\node (v1) at (-3,-3) {};
\node (v3) at (3,-1.5) {};
\node (v4) at (7,2) {};
\node (v8) at (0.5,-2) {};
\node (v5) at (-0.5,-7.5) {};
\node (v6) at (3.5,-5.5) {};
\node (v7) at (6,-8.5) {};
\draw  (-3,-3) -- (0.5,0.5)[postaction={decorate, decoration={markings,mark=at position .5 with {\arrow[black]{stealth}}}}];
\draw  (3,-1.5) -- (0.5,0.5)[postaction={decorate, decoration={markings,mark=at position .5 with {\arrow[black]{stealth}}}}];
\draw  (3,-1.5) -- (7,2)[postaction={decorate, decoration={markings,mark=at position .5 with {\arrow[black]{stealth}}}}];
\draw  (-0.5,-7.5) -- (3.5,-5.5)[postaction={decorate, decoration={markings,mark=at position .5 with {\arrow[black]{stealth}}}}];
\draw   (6,-8.5) -- (3.5,-5.5)[postaction={decorate, decoration={markings,mark=at position .5 with {\arrow[black]{stealth}}}}];
\draw  (-0.5,-7.5) -- (-3,-3)[postaction={decorate, decoration={markings,mark=at position .5 with {\arrow[black]{stealth}}}}];
\draw  (-0.5,-7.5) -- (0.5,-2)[postaction={decorate, decoration={markings,mark=at position .5 with {\arrow[black]{stealth}}}}];
\draw  (3.5,-5.5)-- (0.5,-2)[postaction={decorate, decoration={markings,mark=at position .5 with {\arrow[black]{stealth}}}}];
\draw   (6,-8.5) -- (7,2)[postaction={decorate, decoration={markings,mark=at position .5 with {\arrow[black]{stealth}}}}];
\node (v9) at (2,-11) {};
\draw  (2,-11) --  (6,-8.5)[postaction={decorate, decoration={markings,mark=at position .5 with {\arrow[black]{stealth}}}}];
\draw  (2,-11) -- (-0.5,-7.5)[postaction={decorate, decoration={markings,mark=at position .5 with {\arrow[black]{stealth}}}}];
\draw  (3.5,-5.5) -- (7,2)[postaction={decorate, decoration={markings,mark=at position .5 with {\arrow[black]{stealth}}}}];
\draw[fill] (v1) circle [radius=0.2];
\draw[fill] (v2) circle [radius=0.2];
\draw[fill] (v3) circle [radius=0.2];
\draw[fill] (v4) circle [radius=0.2];
\draw[fill] (v5) circle [radius=0.2];
\draw[fill] (v6) circle [radius=0.2];
\draw[fill] (v7) circle [radius=0.2];
\draw[fill] (v8) circle [radius=0.2];
\draw[fill] (v9) circle [radius=0.2];
\draw [dashed] (0.5,-2) --  (0.5,0.5)[postaction={decorate, decoration={markings,mark=at position .5 with {\arrow[black]{stealth}}}}];
\draw [dashed] (3.5,-5.5) -- (3,-1.5)[postaction={decorate, decoration={markings,mark=at position .5 with {\arrow[black]{stealth}}}}];
\draw [dashed](0.5,0.5) -- (7,2)[postaction={decorate, decoration={markings,mark=at position .5 with {\arrow[black]{stealth}}}}];
\node at (7,3) {$t$};
\node at (2,-12) {$s$};
\draw [dashed]  plot[smooth, tension=.7] coordinates {(v9) (6,-10) (9,-6) (9,-3) (8,0) (v4)}[postaction={decorate, decoration={markings,mark=at position .6 with {\arrow[black]{stealth}}}}];
\node at (8.2,-5) {$e$};
\end{tikzpicture}

Figure $2$. A planar $st$ graph with the graph in Fig. $1$ as spanning subgraph.
\end{center}

Another fundamental characterization of upward planar graphs was given in \cite{[BB91],[BBLM94]} by means of \emph{bimodal planar drawings} \cite{[GT95]} and \emph{consistent assignments} of sources and sinks to faces. Aware of these elegant and important combinatorial characterizations, in this paper, we will study upward planarity in a different approach.

The notion of a \emph{processive plane graph} (called a \emph{PPG} for short; see Definition \ref{ppg}) was introduced in \cite{[HLY16]} as  a graphical tool for tensor calculus in semi-groupal categories. It turns out that a PPG is essentially equivalent to an upward plane $st$ graph, as shown in Fig. $3$.
\begin{center}
$$
\begin{matrix}
\begin{matrix}
\begin{tikzpicture}[scale=0.3]
\node (v2) at (-4,3) {};
\draw[fill] (-1.5,5.5) circle [radius=0.15];
\node (v1) at (-1.5,5.5) {};
\node (v7) at (-1.5,1) {};
\node (v9) at (1.5,5.5) {};
\node (v14) at (2,1.5) {};
\node (v3) at (-3,7.5) {};
\node (v4) at (-2,7.5) {};
\node (v5) at (-0.5,7.5) {};
\node (v6) at (-4.8,7.5) {};
\node (v11) at (-4.5,-1) {};
\node (v12) at (-2,-1) {};
\node (v13) at (0,-1) {};
\node (v15) at (2,-1) {};
\node (v8) at (1,7.5) {};
\node (v10) at (2.5,7.5) {};
\node  at (-2.5,3.5) {};
\node  at (-3,5.2) {};
\node  at (-1.2,3.3) {};
\node  at (0.5,3.25) {};
\node  at (2.2,3.7) {};
\node  at (-3,1.7) {};
\draw[fill] (-4,3) circle [radius=0.15];
\draw[fill] (v1) circle [radius=0.15];
\draw[fill] (v7) circle [radius=0.15];
\draw[fill] (v9) circle [radius=0.15];
\draw[fill] (v14) circle [radius=0.15];
\draw[fill] (v1) circle [radius=0.15];
\draw[fill] (v2) circle [radius=0.15];
\draw[fill] (v3) circle [radius=0.15];
\draw[fill] (v4) circle [radius=0.15];
\draw[fill] (v5) circle [radius=0.15];
\draw[fill] (v6) circle [radius=0.15];
\draw[fill] (v8) circle [radius=0.15];
\draw[fill] (v10) circle [radius=0.15];
\draw[fill] (v11) circle [radius=0.15];
\draw[fill] (v12) circle [radius=0.15];
\draw[fill] (v13) circle [radius=0.15];
\draw[fill] (v15) circle [radius=0.15];

\draw  plot[smooth, tension=1] coordinates {(v1) (-2.5,5)  (-3.5,4) (v2)}[postaction={decorate, decoration={markings,mark=at position .5 with {\arrow[black]{stealth}}}}];
\draw  plot[smooth, tension=1] coordinates {(v1) (-2,4.5)  (-3,3.5) (v2)}[postaction={decorate, decoration={markings,mark=at position .5 with {\arrow[black]{stealth}}}}];

\draw  (-3,7.5) -- (-1.5,5.5)[postaction={decorate, decoration={markings,mark=at position .5 with {\arrow[black]{stealth}}}}];
\draw  (-2,7.5) -- (-1.5,5.5)[postaction={decorate, decoration={markings,mark=at position .5 with {\arrow[black]{stealth}}}}];
\draw  (-0.5,7.5) -- (-1.5,5.5)[postaction={decorate, decoration={markings,mark=at position .5 with {\arrow[black]{stealth}}}}];

\draw  (-4.8,7.5)-- (-4,3)[postaction={decorate, decoration={markings,mark=at position .5 with {\arrow[black]{stealth}}}}];
\draw  (-1.5,5.5)  -- (-1.5,1)[postaction={decorate, decoration={markings,mark=at position .5 with {\arrow[black]{stealth}}}}];
\draw  (-4,3) -- (-1.5,1)[postaction={decorate, decoration={markings,mark=at position .5 with {\arrow[black]{stealth}}}}];

\draw (1,7.5)--(1.5,5.5)[postaction={decorate, decoration={markings,mark=at position .5 with {\arrow[black]{stealth}}}}];
\draw  (2.5,7.5) -- (1.5,5.5)[postaction={decorate, decoration={markings,mark=at position .5 with {\arrow[black]{stealth}}}}];
\draw  (1.5,5.5) -- (-1.5,1)[postaction={decorate, decoration={markings,mark=at position .5 with {\arrow[black]{stealth}}}}];
\draw  (-4,3) -- (-4.5,-1)[postaction={decorate, decoration={markings,mark=at position .5 with {\arrow[black]{stealth}}}}];
\draw  (-1.5,1) -- (-2,-1)[postaction={decorate, decoration={markings,mark=at position .65 with {\arrow[black]{stealth}}}}];
\draw  (0,-1) -- (-1.5,1)[postaction={decorate, decoration={markings,mark=at position .5 with {\arrowreversed[black]{stealth}}}}];
\draw  (1.5,5.5) -- (2,1.5)[postaction={decorate, decoration={markings,mark=at position .5 with {\arrow[black]{stealth}}}}];
\draw  (2,1.5) -- (2,-1)[postaction={decorate, decoration={markings,mark=at position .5 with {\arrow[black]{stealth}}}}];

\node (v17) at (6.5,7.5) {};
\node (v16) at (6.5,-1) {};
\draw[fill] (v16) circle [radius=0.15];
\draw[fill] (v17) circle [radius=0.15];
\draw  (6.5,-1) -- (6.5,7.5)[postaction={decorate, decoration={markings,mark=at position .5 with {\arrowreversed[black]{stealth}}}}];
\node (v18) at (4.5,7.5) {};
\node (v19) at (4.5,-1) {};
\node (v20) at (4.5,3.25) {};
\draw[fill] (v18) circle [radius=0.15];
\draw[fill] (v19) circle [radius=0.15];

\draw[fill] (v20) circle [radius=0.15];
\draw  (4.5,-1) -- (4.5,3.25)[postaction={decorate, decoration={markings,mark=at position .5 with {\arrowreversed[black]{stealth}}}}];
\draw  (4.5,3.24) -- (4.5,7.5)[postaction={decorate, decoration={markings,mark=at position .5 with {\arrowreversed[black]{stealth}}}}];

\draw [loosely dashed] (-6.5,7.5)--(-6.5,-1);
\draw [loosely dashed] (8,7.5)--(8,-1);
\draw [loosely dashed] (-6.5,7.5)--(8,7.5);
\draw [loosely dashed] (-6.5,-1)--(8,-1);

\end{tikzpicture}
\end{matrix}&\Longleftrightarrow&
\begin{matrix}
\begin{tikzpicture}[scale=0.3]
\node (v2) at (-4,3) {};
\draw[fill] (-1.5,5.5) circle [radius=0.15];
\node (v1) at (-1.5,5.5) {};
\node (v7) at (-1.5,1) {};
\node (v9) at (1.5,5.5) {};
\node (v14) at (2,1.5) {};
\node (v3) at (-3,7.5) {};
\node (v4) at (-2,7.5) {};
\node (v5) at (-0.5,7.5) {};
\node (v6) at (-4.8,7.5) {};
\node (v11) at (-4.5,-1) {};
\node (v12) at (-2,-1) {};
\node (v13) at (0,-1) {};
\node (v15) at (2,-1) {};
\node (v8) at (1,7.5) {};
\node (v10) at (2.5,7.5) {};
\node  at (-2.5,3.5) {};
\node  at (-3,5.2) {};
\node  at (-1.2,3.3) {};
\node  at (0.5,3.25) {};
\node  at (2.2,3.7) {};
\node  at (-3,1.7) {};
\draw[fill] (-4,3) circle [radius=0.15];
\draw[fill] (v1) circle [radius=0.15];
\draw[fill] (v7) circle [radius=0.15];
\draw[fill] (v9) circle [radius=0.15];
\draw[fill] (v14) circle [radius=0.15];
\draw[fill] (v1) circle [radius=0.15];
\draw[fill] (v2) circle [radius=0.15];

\draw  plot[smooth, tension=1] coordinates {(v1) (-2.5,5)  (-3.5,4) (v2)}[postaction={decorate, decoration={markings,mark=at position .5 with {\arrow[black]{stealth}}}}];
\draw  plot[smooth, tension=1] coordinates {(v1) (-2,4.5)  (-3,3.5) (v2)}[postaction={decorate, decoration={markings,mark=at position .5 with {\arrow[black]{stealth}}}}];

\draw  (-3,7.5) -- (-1.5,5.5)[postaction={decorate, decoration={markings,mark=at position .5 with {\arrow[black]{stealth}}}}];
\draw  (-2,7.5) -- (-1.5,5.5)[postaction={decorate, decoration={markings,mark=at position .5 with {\arrow[black]{stealth}}}}];
\draw  (-0.5,7.5) -- (-1.5,5.5)[postaction={decorate, decoration={markings,mark=at position .5 with {\arrow[black]{stealth}}}}];

\draw  (-4.8,7.5)-- (-4,3)[postaction={decorate, decoration={markings,mark=at position .5 with {\arrow[black]{stealth}}}}];
\draw  (-1.5,5.5)  -- (-1.5,1)[postaction={decorate, decoration={markings,mark=at position .5 with {\arrow[black]{stealth}}}}];
\draw  (-4,3) -- (-1.5,1)[postaction={decorate, decoration={markings,mark=at position .5 with {\arrow[black]{stealth}}}}];

\draw (1,7.5)--(1.5,5.5)[postaction={decorate, decoration={markings,mark=at position .5 with {\arrow[black]{stealth}}}}];
\draw  (2.5,7.5) -- (1.5,5.5)[postaction={decorate, decoration={markings,mark=at position .5 with {\arrow[black]{stealth}}}}];
\draw  (1.5,5.5) -- (-1.5,1)[postaction={decorate, decoration={markings,mark=at position .5 with {\arrow[black]{stealth}}}}];
\draw  (-4,3) -- (-4.5,-1)[postaction={decorate, decoration={markings,mark=at position .5 with {\arrow[black]{stealth}}}}];
\draw  (-1.5,1) -- (-2,-1)[postaction={decorate, decoration={markings,mark=at position .65 with {\arrow[black]{stealth}}}}];
\draw  (0,-1) -- (-1.5,1)[postaction={decorate, decoration={markings,mark=at position .5 with {\arrowreversed[black]{stealth}}}}];
\draw  (1.5,5.5) -- (2,1.5)[postaction={decorate, decoration={markings,mark=at position .5 with {\arrow[black]{stealth}}}}];
\draw  (2,1.5) -- (2,-1)[postaction={decorate, decoration={markings,mark=at position .5 with {\arrow[black]{stealth}}}}];

\node (v17) at (6.5,7.5) {};
\node (v16) at (6.5,-1) {};

\draw  (6.5,-1) -- (6.5,7.5)[postaction={decorate, decoration={markings,mark=at position .5 with {\arrowreversed[black]{stealth}}}}];
\node (v18) at (4.5,7.5) {};
\node (v19) at (4.5,-1) {};
\node (v20) at (4.5,3.25) {};

\draw[fill] (v20) circle [radius=0.15];
\draw  (4.5,-1) -- (4.5,3.25)[postaction={decorate, decoration={markings,mark=at position .5 with {\arrowreversed[black]{stealth}}}}];
\draw  (4.5,3.24) -- (4.5,7.5)[postaction={decorate, decoration={markings,mark=at position .5 with {\arrowreversed[black]{stealth}}}}];

\node at (-6.5,7.5) {};
\node at (-6.5,-1) {};
\node at (8,-1) {};
\node at (8,7.5) {};

\draw [loosely dashed] (-6.5,7.5)--(-6.5,-1);
\draw [loosely dashed] (8,7.5)--(8,-1);
\draw [loosely dashed] (-6.5,7.5)--(8,7.5);
\draw [loosely dashed] (-6.5,-1)--(8,-1);

\node [above] (v23) at (0.5,10.5) {$s$};
\node [below] (v33) at (0.5,-3.5){$t$};
\draw  (0.5,10.5) -- (-3,7.5);
\draw  (0.5,10.5) --(-2,7.5) ;
\draw  (0.5,10.5) --(-0.5,7.5);
\draw  (0.5,10.5) -- (-4.8,7.5);
\draw  (0.5,10.5) -- (1,7.5);
\draw (0.5,10.5) -- (2.5,7.5);
\draw  (0.5,10.5) --(6.5,7.5) ;
\draw  (0.5,10.5) -- (4.5,7.5);

\draw  (-4.5,-1) -- (0.5,-3.5);
\draw (-2,-1) -- (0.5,-3.5);
\draw (0,-1) -- (0.5,-3.5);
\draw  (2,-1) -- (0.5,-3.5);
\draw (6.5,-1) -- (0.5,-3.5);
\draw  (4.5,-1) -- (0.5,-3.5);

\draw[fill] (0.5,-3.5) circle [radius=0.15];
\draw[fill] (0.5,10.5) circle [radius=0.15];
\draw  plot[smooth, tension=.7] coordinates { (0.5,10.5) (-6.5,8.5) (-9,5.5) (-9,1) (-6.5,-2) (0.5,-3.5)}[postaction={decorate, decoration={markings,mark=at position .5 with {\arrow[black]{stealth}}}}];
\node at (-11,3) {$e$};
\end{tikzpicture}
\end{matrix}
\end{matrix}
$$
Figure $3$. A PPG and its associated upward plane $st$ graph.
\end{center}

The notion of a \emph{progressive plane graph}, with that of a PPG as a special case, was introduced by  Joyal and Street in \cite{[JS91]} as  a graphical tool for tensor calculus in monoidal categories. Similar to the above characterization of Di Battista, Tamassia \cite{[BT88]} and Kelly \cite{[Ke87]}, any progressive plane graph can be extended (in a non-unique way) to a PPG, as shown in Fig. $4$. It is clear that a progressive plane graph is essentially  an upward plane graph (possibly with isolated vertices).
\begin{center}
$$
\begin{matrix}\begin{matrix}
\begin{tikzpicture}[scale=0.35]

\draw [dashed] (-3,1.5) rectangle (8.5,-6.5);
\node [above](v1) at (-1,1.5) {};
\node (v2) at (0,-1) {};
\node (v6) at (-1.5,-4.5) {};
\node [left] at (-1.5,-4.5) {};
\node (v3) at (1.5,0.5) {};
\node (v8) at (4,-2) {};
\node (v4) at (0.5,-3.5) {};
\node [below](v5) at (0.5,-6.5) {};
\node [above](v7) at (4,1.5) {};
\draw  (-1,1.5)  -- (0,-1)[postaction={decorate, decoration={markings,mark=at position .5 with {\arrow[black]{stealth}}}}];
\draw   (1.5,0.5) -- (0,-1)[postaction={decorate, decoration={markings,mark=at position .5 with {\arrow[black]{stealth}}}}];
\draw   (1.5,0.5)--  (0.5,-3.5)[postaction={decorate, decoration={markings,mark=at position .5 with {\arrow[black]{stealth}}}}];
\draw  (0.5,-3.5) -- (v5)[postaction={decorate, decoration={markings,mark=at position .5 with {\arrow[black]{stealth}}}}];
\draw  (0,-1) -- (-1.5,-4.5)[postaction={decorate, decoration={markings,mark=at position .5 with {\arrow[black]{stealth}}}}];
\draw  (0,-1) -- (0.5,-3.5)[postaction={decorate, decoration={markings,mark=at position .5 with {\arrow[black]{stealth}}}}];
\draw  (4,1.5)-- (4,-2)[postaction={decorate, decoration={markings,mark=at position .5 with {\arrow[black]{stealth}}}}];
\draw   (1.5,0.5)-- (4,-2)[postaction={decorate, decoration={markings,mark=at position .5 with {\arrow[black]{stealth}}}}];
\node (v9) at (3,-5.5) {};
\draw  (0.5,-3.5) -- (3,-5.5)[postaction={decorate, decoration={markings,mark=at position .5 with {\arrow[black]{stealth}}}}];
\draw  (4,-2) -- (3,-5.5)[postaction={decorate, decoration={markings,mark=at position .5 with {\arrow[black]{stealth}}}}];
\node (v10) at (2,-3.5) {};
\draw   (1.5,0.5) -- (2,-3.5)[postaction={decorate, decoration={markings,mark=at position .5 with {\arrow[black]{stealth}}}}];
\draw  (4,-2) -- (2,-3.5)[postaction={decorate, decoration={markings,mark=at position .5 with {\arrow[black]{stealth}}}}];

\draw [fill](v2) circle [radius=0.11];
\draw [fill](v3) circle [radius=0.11];
\draw [fill](v4) circle [radius=0.11];
\draw [fill](v6) circle [radius=0.11];
\draw [fill](v8) circle [radius=0.11];
\draw [fill](v9) circle [radius=0.11];
\draw [fill](v10) circle [radius=0.11];

\node at (-1,0.5) {};
\node at (4.5,0) {};
\node at (0,-5.5) {};
\node at (-1.5,-3) {};

\draw(6.5,1.5)--(6.5,-6.5)[postaction={decorate, decoration={markings,mark=at position .5 with {\arrow[black]{stealth}}}}];
\node [above]at(6.5,1.5) {};
\node [below]at (6.5,-6.5) {};
\node at (7,-3) {};

\draw [fill] (-1,1.5) circle [radius=0.11];
\draw [fill](0.5,-6.5) circle [radius=0.11];
\draw [fill](4,1.5) circle [radius=0.11];
\draw [fill](6.5,-6.5) circle [radius=0.11];
\draw [fill](6.5,1.5) circle [radius=0.11];

\draw [fill](5.5,-1) circle [radius=0.11];
\draw [fill](2.7,-1.7) circle [radius=0.11];

\draw [fill](0.35,0.45) circle [radius=0.11];
\end{tikzpicture}
\end{matrix}&&&&\begin{matrix}
  \begin{tikzpicture}[scale=0.35]

\draw [loosely dashed] (-3,1.5) rectangle (8.5,-6.5);
\node [above](v1) at (-1,1.5) {};
\node (v2) at (0,-1) {};
\node (v6) at (-1.5,-4.5) {};
\node [left] at (-1.5,-4.5) {};
\node (v3) at (1.5,0.5) {};
\node (v8) at (4,-2) {};
\node (v4) at (0.5,-3.5) {};
\node [below](v5) at (0.5,-6.5) {};
\node [above](v7) at (4,1.5) {};
\draw  (-1,1.5)  -- (0,-1)[postaction={decorate, decoration={markings,mark=at position .5 with {\arrow[black]{stealth}}}}];
\draw   (1.5,0.5) -- (0,-1)[postaction={decorate, decoration={markings,mark=at position .5 with {\arrow[black]{stealth}}}}];
\draw   (1.5,0.5)--  (0.5,-3.5)[postaction={decorate, decoration={markings,mark=at position .5 with {\arrow[black]{stealth}}}}];
\draw  (0.5,-3.5) -- (v5)[postaction={decorate, decoration={markings,mark=at position .5 with {\arrow[black]{stealth}}}}];
\draw  (0,-1) -- (-1.5,-4.5) node (v19) {}[postaction={decorate, decoration={markings,mark=at position .5 with {\arrow[black]{stealth}}}}];
\draw  (0,-1) node (v16) {} -- (0.5,-3.5)[postaction={decorate, decoration={markings,mark=at position .5 with {\arrow[black]{stealth}}}}];
\draw  (4,1.5)-- (4,-2)[postaction={decorate, decoration={markings,mark=at position .5 with {\arrow[black]{stealth}}}}];
\draw   (1.5,0.5)-- (4,-2)[postaction={decorate, decoration={markings,mark=at position .5 with {\arrow[black]{stealth}}}}];
\node (v9) at (3,-5.5) {};
\draw  (0.5,-3.5) -- (3,-5.5)[postaction={decorate, decoration={markings,mark=at position .5 with {\arrow[black]{stealth}}}}];
\draw  (4,-2) -- (3,-5.5) node (v22) {}[postaction={decorate, decoration={markings,mark=at position .5 with {\arrow[black]{stealth}}}}];
\node (v10) at (2,-3.5) {};
\draw   (1.5,0.5) node (v18) {} -- (2,-3.5)[postaction={decorate, decoration={markings,mark=at position .5 with {\arrow[black]{stealth}}}}];
\draw  (4,-2) -- (2,-3.5) node (v21) {}[postaction={decorate, decoration={markings,mark=at position .5 with {\arrow[black]{stealth}}}}];

\draw [fill](v2) circle [radius=0.11];
\draw [fill](v3) circle [radius=0.11];
\draw [fill](v4) circle [radius=0.11];
\draw [fill](v6) circle [radius=0.11];
\draw [fill](v8) circle [radius=0.11];
\draw [fill](v9) circle [radius=0.11];
\draw [fill](v10) circle [radius=0.11];

\node at (-1,0.5) {};
\node at (4.5,0) {};
\node at (0,-5.5) {};
\node at (-1.5,-3) {};

\draw(6.5,1.5)--(6.5,-6.5)[postaction={decorate, decoration={markings,mark=at position .5 with {\arrow[black]{stealth}}}}];
\node [above]at(6.5,1.5) {};
\node [below]at (6.5,-6.5) {};
\node at (7,-3) {};

\draw [fill] (-1,1.5) circle [radius=0.11];
\draw [fill](0.5,-6.5) circle [radius=0.11];
\draw [fill](4,1.5) circle [radius=0.11];
\draw [fill](6.5,-6.5) circle [radius=0.11];
\draw [fill](6.5,1.5) circle [radius=0.11];

\draw [fill](5.5,-1) circle [radius=0.11];
\draw [fill](2.7,-1.7) node (v24) {} circle [radius=0.11];

\draw [fill](0.35,0.45) node (v15) {} circle [radius=0.11];

\node (v11) at (5.5,1.5) {};
\draw [fill](5.5,1.5) circle [radius=0.11];
\node (v12) at (5.5,-1) {};

\node (v13) at (5.5,-6.5) {};
\draw [fill](5.5,-6.5) circle [radius=0.11];
\draw [dashed] (5.5,1.5) --(5.5,-1)[postaction={decorate, decoration={markings,mark=at position .5 with {\arrow[black]{stealth}}}}];
\draw  [dashed](5.5,-1) -- (5.5,-6.5)[postaction={decorate, decoration={markings,mark=at position .5 with {\arrow[black]{stealth}}}}];
\node (v14) at (0,1.5) {};
\draw [fill](v14) circle [radius=0.11];
\draw  [dashed](0,1.5)-- (0.35,0.45)[postaction={decorate, decoration={markings,mark=at position .55 with {\arrow[black]{stealth}}}}];
\draw  [dashed](0.35,0.45)-- (0,-1)[postaction={decorate, decoration={markings,mark=at position .65 with {\arrow[black]{stealth}}}}];
\node (v17) at (1.5,1.5) {};
\draw [fill](v17) circle [radius=0.11];

\draw[dashed]  (1.5,1.5)-- (1.5,0.5)[postaction={decorate, decoration={markings,mark=at position .65 with {\arrow[black]{stealth}}}}];
\node (v20) at (-1.5,-6.5) {};
\draw [fill](v20) circle [radius=0.11];
\draw [dashed] (-1.5,-4.5) -- (-1.5,-6.5)[postaction={decorate, decoration={markings,mark=at position .5 with {\arrow[black]{stealth}}}}];
\draw [dashed] (2,-3.5)-- (3,-5.5)[postaction={decorate, decoration={markings,mark=at position .5 with {\arrow[black]{stealth}}}}];
\node (v23) at (3,-6.5) {};
\draw [fill](v23) circle [radius=0.11];
\draw  [dashed](3,-5.5) -- (3,-6.5)[postaction={decorate, decoration={markings,mark=at position .65 with {\arrow[black]{stealth}}}}];
\draw [dashed] (2.7,-1.7) -- (2,-3.5)[postaction={decorate, decoration={markings,mark=at position .5 with {\arrow[black]{stealth}}}}];
\draw [dashed] (1.5,0.5) -- (2.7,-1.7)[postaction={decorate, decoration={markings,mark=at position .65 with {\arrow[black]{stealth}}}}];
\end{tikzpicture}
\end{matrix}
\end{matrix}
$$

Figure $4$. A progressive plane graph and one of its PPG-extentions.

\end{center}

One main result in \cite{[HLY16]} is that a PPG can be  characterized in term of the notion of a \emph{planar order}, which is a special linear extension of the edge poset (short for \emph{partially ordered set}) of the underlying directed graph (Theorem \ref{T1}).
Based on and to generalize this result, we will give a similar characterization of upward planar graphs. Precisely, we introduce the notion of an \emph{upward planar order} (Definition \ref{upo}) for an acyclic directed graph $G$, which is a generalization of that of a planar order, and show that $G$ is upward planar if and only if it has an upward planar order (Theorem \ref{T3}). We summarize the conceptual framework as follows.

\begin{center}
\begin{tikzpicture}[scale=1.38]
\draw  (-4.5,2) rectangle (5.5,-1);
\draw  (-4.5,1) rectangle (-4.5,1) node (v1) {};
\draw  (5.5,1) rectangle (5.5,1) node (v2) {};
\draw  (v1) edge (v2);
\node (v3) at (-4.5,0) {};
\node (v4) at (5.5,0) {};
\draw  (v3) edge (v4);
\node (v7) at (1.5,2) {};
\node (v8) at (1.5,-1) {};
\node (v5) at (-1.5,2) {};
\node (v6) at (-1.5,-1) {};
\draw  (v5) edge (v6);
\draw  (v7) edge (v8);
\node at (-3,1.5) {category theory};
\node at (0,1.5) {graph theory};
\node at (-3,0.5) {PPG};
\node at (0,0.5) {upward plane $st$ graph};
\node at (-3,-0.5) {progressive plane graph};
\node at (0,-0.5) {upward plane graph};
\node at (3.5,1.5) {combinatorial characterization};
\node at (3.5,-0.5) { upward planar order};
\node at (3.5,0.5) {planar order};
\end{tikzpicture}

Figure $5$.
\end{center}

To prove our main result, Theorem \ref{T3}, we  put in much effort to analysis the relationship between the notion of a planar order and that of an upward planar order (Theorem \ref{PP and AUP equivalence}), and especially introduce the notion of a \emph{canonical processive planar extension} (\emph{CPP-extension} for short, Definition \ref{cpp}), which is the crucial link between our combinatorial characterization of PPGs and that of upward plane graphs. Our strategy of justifying our characterization of upward planar graphs by means of CPP extensions and combinatorial characterization of processive plane graphs is a combinatorial formulation of  PPG-extensions of progressive plane graphs and in a sense, a refinement of the work of Di Battista, Tamassia \cite{[BT88]} and Kelly \cite{[Ke87]}.

One advantage of our approach to study upward planarity is that it admits a composition theory of upward planar orders just as that of planar orders in \cite{[HLY16]}, which provides a practical way to compute an associated upward planar order of an upward plane graph. The composition theory will be presented elsewhere.
Another advantage is that our characterization  sheds some light on the longstanding problem \cite{[Tr78], [Ri93]} of finding a topological theory of posets (parallel to \emph{topological graph theory}) which should generalize upward planarity to higher genus surfaces. Observe that any linear extension of the edge poset of a connected acyclic directed graph $G$ naturally induces a rotation system on $G$, or a cellular embedding of $G$ on a surface, whose genus is called the \emph{linear genus} of the linear extension. The linear genus of $G$ is defined as the minimal linear genus of linear extensions of the edge poset of $G$. Our characterization means that $G$ is upward planar implies that the linear genus of $G$ is zero. However, the converse is not true; see Fig. $6$.
\begin{center}
\begin{tikzpicture}[scale=1]
\node (v6) at (-4,0.5) {};
\node (v5) at (-1,2) {};
\node (v4) at (-1,-1) {};
\node (v2) at (-2,0.5) {};
\node (v1) at (-1,0.5) {};
\node (v3) at (0,0.5) {};
\draw[fill] (v1) circle [radius=0.07];
\draw[fill] (v2) circle [radius=0.07];
\draw[fill] (v3) circle [radius=0.07];
\draw[fill] (v4) circle [radius=0.07];
\draw[fill] (v5) circle [radius=0.07];
\draw[fill] (v6) circle [radius=0.07];
\draw (-1,0.5) --(-2,0.5)[postaction={decorate, decoration={markings,mark=at position .5 with {\arrowreversed[black]{stealth}}}}];
\draw  (-1,0.5) -- (0,0.5)[postaction={decorate, decoration={markings,mark=at position .5 with {\arrowreversed[black]{stealth}}}}];
\draw (0,0.5) -- (-1,-1) [postaction={decorate, decoration={markings,mark=at position .5 with {\arrowreversed[black]{stealth}}}}];
\draw  (-2,0.5) -- (-1,-1) [postaction={decorate, decoration={markings,mark=at position .5 with {\arrowreversed[black]{stealth}}}}];
\draw   (-1,2) -- (-4,0.5)[postaction={decorate, decoration={markings,mark=at position .5 with {\arrowreversed[black]{stealth}}}}];
\draw  (-1,-1)  -- (-4,0.5)[postaction={decorate, decoration={markings,mark=at position .5 with {\arrowreversed[black]{stealth}}}}];
\draw  (-2,0.5)--  (-1,2)[postaction={decorate, decoration={markings,mark=at position .5 with {\arrowreversed[black]{stealth}}}}];
\draw (0,0.5) -- (-1,2)[postaction={decorate, decoration={markings,mark=at position .5 with {\arrowreversed[black]{stealth}}}}];
\node at (-4.3,0.7) {$s$};
\node at (-1,0.8) {$t$};
\node [scale=0.8]at (-2.8,1.4) {$1$};
\node[scale=0.8] at (-2.8,-0.4) {$2$};
\node [scale=0.8]at (-1.3,1.1) {$3$};
\node[scale=0.8] at (-0.7,1.1) {$4$};
\node [scale=0.8]at (-1.3,-0.2) {$5$};
\node [scale=0.8]at (-0.7,-0.2) {$6$};
\node [scale=0.8]at (-1.5,0.3) {$7$};
\node [scale=0.8]at (-0.6,0.3) {$8$};
\end{tikzpicture}

Figure $6$. A directed graph which is not upward planar but with linear genus zero.
\end{center}

It would be interesting to find possible relations between the aforementioned theory of linear genus and the higher genus theory of upward planarity proposed in \cite{[HLY16]}, which is based on the graphical calculus for symmetric monoidal categories (Chapter $2$ of \cite{[JS91]}). Yet another interesting fact is that the axioms in the definition of an upward planar order can be restated as axioms for hypergraphs, or in other words, the notion of an upward planar order makes sense for hypergraphs. These directions are worth studying in the future.

The paper is organized as follows. In Section $2$, we review the combinatorial characterization of PPGs in \cite{[HLY16]}. In Section $3$, we introduce the notions of an upward planar order and a UPO-graph and study their basic properties. In Section $4$, we give several new characterizations of PPGs. In Section $5$, we introduce the notion of a CPP-extension for an acyclic directed graph $G$. We show that there is a natural bijection between upward planar orders on $G$ and  CPP-extensions of $G$. In Section $6$, we justify the notion of a UPO-graph by showing that a UPO-graph has a unique upward planar drawing up to planar isotopy, and conversely, there is at least one upward planar order for any upward plane graph. We point out that our characterization of upward planarity can also be applied to characterize (non-directed) planar graphs.

\section{PPG and POP-graph}
In this section, we recall the notion of a PPG and its combinatorial characterization.
\begin{defn}\label{ppg}
A \emph{processive plane graph}, or PPG, is an acyclic directed graph drawn in a plane box such that $(1)$ all edges monotonically decrease in the vertical direction; $(2)$ all sources and sinks are of degree one; and $(3)$ all sources and sinks are drawn on the horizontal boundaries of the plane box.
\end{defn}
The left of Fig. $3$ shows an example. The following notion characterizes the underlying graph of a PPG.
\begin{defn}\label{pro}
A \emph{processive graph} is an acyclic directed graph with all sources and sinks being of degree one.
\end{defn}
Clearly, this notion is essentially equivalent to that of a \emph{PERT-graph} \cite{[TT86]} which is a directed graph with exactly one source $s$ and exactly one sink $t$ (the underlying graph in Fig. $6$ is an example), and also equivalent to that of an \emph{$st$ graph} which is a PERT-graph with a distinguished edge $e$ connecting $s$ and $t$ (the underlying directed graph in the right of Fig. $3$ is an example).

A vertex is called \emph{processive} if it is neither a source nor a sink.
An edge of a processive graph is called an \emph{input edge} if it starts from a source and \emph{output edge} if it ends with a sink. We denote the set of input edges of a processive graph $G$ by $I(G)$, and the set of output edges by $O(G)$.

A planar drawing of a processive graph $G$ is  \emph{boxed} if it is drawn in a plane box with all sources of $G$ on one of the horizontal boundaries of the plane box and all sinks of $G$  on the other one. From the left of Fig. $3$, it is easy to see that a PPG is a boxed and upward planar drawing of a processive graph. Two PPGs are equivalent if they are connected by a planar isotopy such that each intermediate planar drawing is boxed (but not necessarily upward).

For a vertex $v$ of a directed graph $G$, a \emph{polarization} \cite{[JS91]} of $v$ consists of two linear orders, one on the set $I_G(v)$ (or $I(v)$ for simplicity) of incoming edges of $v$ and the other on the set $O_G(v)$ (or $O(v)$ for simplicity) of outgoing edges of $v$ (possibly one of them is empty). A directed graph is called \emph{polarized} if each vertex is equipped with a polarization. In the way shown in Fig. $7$, PPGs and general upward plane graphs are polarized.

\begin{center}
\begin{tikzpicture}[scale=0.4]
\node (v2) at (0,0.5) {};
\draw[fill] (v2) circle [radius=0.13];
\node [above] (v1) at (-1.5,2.5) {};
\node [above](v3) at (-0.5,2.5) {};
\node [scale=0.8, below]at (0.5,2.5) {$\cdots$};
\node [above](v4) at (1.5,2.5) {};
\node [below](v5) at (-1.5,-1.5) {};
\node [below](v6) at (-0.5,-1.5) {};
\node[scale=.8, above](v7) at (0.5,-1.5) {$\cdots$};
\node [below](v8) at (1.5,-1.5) {};
\draw  (-1.5,2.5) -- (0,0.5)[postaction={decorate, decoration={markings,mark=at position .5 with {\arrow[black]{stealth}}}}];
\draw  (-0.5,2.5) -- (0,0.5)[postaction={decorate, decoration={markings,mark=at position .5 with {\arrow[black]{stealth}}}}];
\draw  (1.5,2.5) -- (0,0.5)[postaction={decorate, decoration={markings,mark=at position .5 with {\arrow[black]{stealth}}}}];
\draw  (0,0.5) -- (-1.5,-1.5)[postaction={decorate, decoration={markings,mark=at position .55 with {\arrow[black]{stealth}}}}];
\draw  (0,0.5) -- (-0.5,-1.5)[postaction={decorate, decoration={markings,mark=at position .55 with {\arrow[black]{stealth}}}}];
\draw  (0,0.5) -- (1.5,-1.5)[postaction={decorate, decoration={markings,mark=at position .55 with {\arrow[black]{stealth}}}}];
\node[scale=0.8] at (-1.6,1.9) {$1$};
\node [scale=0.8]at (-0.8,2.1) {$2$};
\node [scale=0.8]at (1.5,1.9) {$m$};
\node [scale=0.8]at (-1.5,-0.8) {$1$};
\node [scale=0.8]at (-0.7,-1) {$2$};
\node [scale=0.8]at (1.4,-0.9) {$n$};
\end{tikzpicture}

Figure $7$. Polarization of a vertex.
\end{center}

The following is a key notion in \cite{[HLY16]}.

\begin{defn}
A \emph{planar order} on a processive graph $G$ is a linear order $\prec$ on the edge set $E(G)$, such that

$(P_1)$ $e_1\rightarrow e_2$ implies that $e_1\prec e_2$;

$(P_2)$ if $e_1\prec e_2\prec e_3$ and $e_1\rightarrow e_3$, then either $e_1\rightarrow e_2$ or $e_2\rightarrow e_3$,

where $e_1\rightarrow e_2$ denotes that there is a directed path starting from $e_1$ and ending with  $e_2$.
\end{defn}
Figure $8$ shows a simple example motivating the definition. By the linearity of $\prec$, it is easy to see that $(P_2)$ is equivalent to $(\widetilde{P_2})$: if $e_1\prec e_2\prec e_3$ and $t(e_1)=s(e_3)$, then either $e_1\rightarrow e_2$ or $e_2\rightarrow e_3$, where $s(e)$, $t(e)$ denote the starting and ending vertex of edge $e$, respectively.

\begin{center}
\begin{tikzpicture}[scale=0.6]
\node (v2) at (-2,0.5) {};
\node (v5) at (0.5,0.5) {};
\draw[fill] (0.5,0.5) circle [radius=0.11];
\node (v12) at (3,0.5) {};
\draw[fill] (3,0.5) circle [radius=0.11];
\node [scale=0.7](v1) at (-2,2) {$1$};
\node (v3) at (-2,-1) {};
\node [scale=0.7] (v4) at (-0.5,2) {$2$};
\node [scale=0.7](v6) at (0.5,2) {$3$};
\node [scale=0.7](v7) at (1.5,2) {$4$};
\node [scale=0.7](v8) at (-0.5,-1) {$5$};
\node [scale=0.7](v9) at (0.5,-1) {$6$};
\node [scale=0.7] (v10) at (1.5,-1) {$7$};
\node [scale=0.7](v11) at (2.5,2) {$8$};
\node [scale=0.7](v13) at (3.5,2) {$9$};
\node [scale=0.7](v14) at (2.5,-1) {$10$};
\node [scale=0.7] (v15) at (3.5,-1) {$11$};
\draw  (v1) -- (v3)[postaction={decorate, decoration={markings,mark=at position .45 with {\arrow[black]{stealth}}}}];
\draw  (v4)  --  (0.5,0.5)[postaction={decorate, decoration={markings,mark=at position .45 with {\arrow[black]{stealth}}}}];
\draw  (v6)  --  (0.5,0.5)[postaction={decorate, decoration={markings,mark=at position .45 with {\arrow[black]{stealth}}}}];
\draw  (v7)  --  (0.5,0.5)[postaction={decorate, decoration={markings,mark=at position .45 with {\arrow[black]{stealth}}}}];
\draw  (0.5,0.5)  --  (v8)[postaction={decorate, decoration={markings,mark=at position .75 with {\arrow[black]{stealth}}}}];
\draw  (0.5,0.5)  --  (v9)[postaction={decorate, decoration={markings,mark=at position .75 with {\arrow[black]{stealth}}}}];
\draw  (0.5,0.5)  --  (v10)[postaction={decorate, decoration={markings,mark=at position .75 with {\arrow[black]{stealth}}}}];
\draw  (v11)  --  (3,0.5)[postaction={decorate, decoration={markings,mark=at position .45 with {\arrow[black]{stealth}}}}];
\draw  (v13)  --  (3,0.5)[postaction={decorate, decoration={markings,mark=at position .45 with {\arrow[black]{stealth}}}}];
\draw  (3,0.5)  --  (v14)[postaction={decorate, decoration={markings,mark=at position .75 with {\arrow[black]{stealth}}}}];
\draw  (3,0.5)  --  (v15)[postaction={decorate, decoration={markings,mark=at position .75 with {\arrow[black]{stealth}}}}];

\node [scale=0.7](v16) at (4.5,2) {$12$};
\node [scale=0.7](v17) at (5.5,2) {$13$};
\node [scale=0.7](v18) at (4.5,-1) {};
\node [scale=0.7] (v19) at (5.5,-1) {};
\draw  (v16)  --  (v18)[postaction={decorate, decoration={markings,mark=at position .45 with {\arrow[black]{stealth}}}}];
\draw  (v17)  --  (v19)[postaction={decorate, decoration={markings,mark=at position .45 with {\arrow[black]{stealth}}}}];
\end{tikzpicture}

Figure $8$. A planar order
\end{center}

\begin{defn}
A processive graph together with a planar order is called a \emph{planarly ordered processive graph} or \emph{POP-graph} for short.
\end{defn}
The following is a key result in \cite{[HLY16]}.
\begin{thm}\label{T1}
There is a bijection between  POP-graphs and  equivalence classes of PPGs.
\end{thm}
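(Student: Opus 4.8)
The plan is to construct two maps—one sending a progressive plane graph to a POP-graph, the other sending a POP-graph to an equivalence class of progressive plane graphs—and to check that they are mutually inverse.

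\textbf{From a progressive plane graph to a planar order.} Given a progressive plane graph $\Gamma$ realizing a progressive graph $G$ inside a plane box, I would define a relation $\prec$ on $E(G)$ as follows: since every edge decreases monotonically in the vertical direction, each edge $e$ meets every horizontal line at a point, so to compare $e_1$ and $e_2$ I pick a generic horizontal level $y$ that crosses both (or, if their $y$-ranges are disjoint, use the vertical order directly, which is forced by $(P_1)$) and declare $e_1 \prec e_2$ iff $e_1$ lies to the left of $e_2$ at level $y$. The first task is to show this is well defined, i.e. independent of the chosen level $y$: two edges sharing an interval of heights cannot cross (the embedding is planar) and cannot share an endpoint in their relative interior, so the left–right order is constant along the shared interval; a short argument chains this through intermediate edges to get transitivity, and hence a genuine linear order. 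Property $(P_1)$ holds because if $e_1 \to e_2$ then $t(e_1)$ is weakly above $s(e_2)$ in height, and a directed path forces the left–right order at the junction. Property $(\widetilde{P_2})$ (equivalently $(P_2)$) is the planarity input: if $t(e_1) = s(e_3) = v$ and $e_2$ is squeezed strictly between $e_1$ and $e_3$ at some level, then, tracing $e_2$ upward and downward, it must either terminate/emanate at a vertex enclosed in the region bounded by the portion of $e_1$, the portion of $e_3$, and the box boundary, or it must connect to $v$ itself—and following the progressive structure one shows $e_1 \to e_2$ or $e_2 \to e_3$. One also checks $\prec$ is constant on equivalence classes, since a boxed planar isotopy never changes any left–right order at a level.

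\textbf{From a POP-graph to a progressive plane graph.} Conversely, given a planar order $\prec$ on $G$, I would build an embedding by induction on $|E(G)|$, adding edges in the order prescribed by $\prec$. After placing the first $k$ edges one has a boxed upward drawing of the subgraph $G_k$ on those edges in which the left-to-right order of edges at any level agrees with $\prec$; property $(P_1)$ guarantees $G_k$ is ``downward closed enough'' that the $(k{+}1)$-st edge $e$ attaches either at a source (new input edge) or at a vertex $v=s(e)$ already present, and symmetrically at its head. The key point is that $(P_2)$ lets one insert $e$ in the unique gap determined by its $\prec$-neighbors without crossing any existing edge: the edges $\prec$-below $e$ and the edges $\prec$-above $e$ occupy disjoint left/right regions near the attachment points, and $(P_2)$ is exactly the condition ensuring no edge ``wraps around'' $e$. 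Carrying this through yields a progressive plane graph, well defined up to boxed isotopy, so it gives a map to equivalence classes.

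\textbf{Mutual inverseness.} Finally I would verify the two constructions are inverse. Starting from a POP-graph, reading off the left–right order from the drawing just built returns $\prec$ by construction of each inductive step. Starting from a progressive plane graph $\Gamma$, the inductive re-drawing from its associated planar order produces a progressive plane graph with the same underlying polarized graph and the same level-wise left–right orders as $\Gamma$; a standard fact about boxed upward embeddings (two such embeddings of the same progressive graph inducing the same order at every level are connected by a boxed isotopy) then gives that it is equivalent to $\Gamma$. The main obstacle I anticipate is the well-definedness in the first direction—proving rigorously that the level-wise left–right comparison is a single transitive relation and that property $(P_2)$ follows from planarity—since this is where the topology of the plane (Jordan curve-type arguments about the region cut out by two edges sharing a vertex) has to be converted into the purely combinatorial axiom; the reverse construction's no-crossing insertion step is really the same difficulty viewed from the other side.
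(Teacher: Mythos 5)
You should first be aware that the paper contains no proof of Theorem \ref{T1}: it is imported verbatim from \cite{[HLY16]}, so there is no internal argument to measure your proposal against. Judged on its own, your outline has the right overall shape (extract an order from a drawing, rebuild a drawing from an order, check the two are mutually inverse), but the forward construction has a genuine gap: the comparison rule you propose does not define a linear order. Comparing two edges left--right at a common horizontal level is fine when their height ranges overlap, but for edges with disjoint height ranges and no directed path between them the rule ``use the vertical order'' is simply wrong, and no chaining argument can repair it, because transitivity genuinely fails. Concretely, take three disjoint directed paths from the top boundary to the bottom boundary: a left path whose middle edge $e_2$ lies in a low horizontal strip, a right path whose middle edge $e_1$ lies in a high strip disjoint from the low one, and a single long edge $e_3$ drawn between the two paths for its entire length. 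Your rules give $e_3\prec e_1$ (at a level in the high strip $e_3$ is left of $e_1$) and $e_2\prec e_3$ (at a level in the low strip), but also $e_1\prec e_2$ by the vertical rule, producing a cycle; any planar order compatible with this embedding must instead have $e_2\prec e_1$. So for height-disjoint, reachability-unrelated edges the order is not determined by which edge is higher; it has to be propagated from the left--right data through the planar structure, and organizing that propagation into a well-defined linear order satisfying $(P_1)$ and $(P_2)$ is precisely the nontrivial content of the theorem in \cite{[HLY16]}.

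The reverse (insertion) direction and the mutual-inverse check are plausible as sketched, but they lean on the same unresolved point: the ``gap determined by the $\prec$-neighbours'' at an attachment vertex, and the claim that two boxed drawings inducing the same level-wise left--right orders are isotopic, only make sense once the forward correspondence has been set up correctly. As it stands, the proposal identifies the right difficulty (converting planarity into $(P_2)$ via Jordan-curve style arguments) but the specific definition it builds on fails before that difficulty is reached.
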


Figure $9$ shows the corresponding POP-graph of the PPG in the left of Fig. $3$.
\begin{center}
\begin{tikzpicture}[scale=0.4]
\node (v2) at (-4,3) {};
\draw[fill] (-1.5,5.5) circle [radius=0.11];
\node (v1) at (-1.5,5.5) {};
\node (v7) at (-1.5,1) {};
\node (v9) at (1.5,5.5) {};
\node (v14) at (2,1.5) {};
\node (v3) at (-3,7.5) {};
\node (v4) at (-2,7.5) {};
\node (v5) at (-0.5,7.5) {};
\node (v6) at (-4.8,7.5) {};
\node (v11) at (-4.5,-1) {};
\node (v12) at (-2,-1) {};
\node (v13) at (0,-1) {};
\node (v15) at (2,-1) {};
\node (v8) at (1,7.5) {};
\node (v10) at (2.5,7.5) {};

\node [scale=0.8]  at (-2.5,3.5) {$6$};
\node[scale=0.8]  at (-3,5.2) {$5$};
\node[scale=0.8]  at (-1.2,3.3) {$9$};
\node[scale=0.8]  at (0.5,3.25) {$12$};
\node[scale=0.8]  at (2.2,3.7) {$15$};
\node[scale=0.8]  at (-3,1.7) {$8$};

\node[scale=0.8] [above] at (-3,7.5) {};
\node[scale=0.8] [above]  at (-2,7.5) {};
\node[scale=0.8] [above] at (-0.5,7.5) {};
\node [scale=0.8][above]  at (-4.8,7.5) {};
\node [scale=0.8][below] at (-4.5,-1) {};
\node [scale=0.8][below]at (-2,-1) {};
\node [scale=0.8][below] at (0,-1) {};
\node [scale=0.8][below] at (2,-1) {};
\node [scale=0.8][above]  at (1,7.5) {};
\node [scale=0.8][above]  at (2.5,7.5) {};

\node  at (-2.5,3.5) {};
\node  at (-3,5.2) {};
\node  at (-1.2,3.3) {};
\node  at (0.5,3.25) {};
\node  at (2.2,3.7) {};
\node  at (-3,1.7) {};
\draw[fill] (-4,3) circle [radius=0.11];
\draw[fill] (v1) circle [radius=0.11];
\draw[fill] (v7) circle [radius=0.11];
\draw[fill] (v9) circle [radius=0.11];
\draw[fill] (v14) circle [radius=0.11];
\draw[fill] (v1) circle [radius=0.11];
\draw[fill] (v2) circle [radius=0.11];
\draw[fill] (v3) circle [radius=0.11];
\draw[fill] (v4) circle [radius=0.11];
\draw[fill] (v5) circle [radius=0.11];
\draw[fill] (v6) circle [radius=0.11];
\draw[fill] (v8) circle [radius=0.11];
\draw[fill] (v10) circle [radius=0.11];
\draw[fill] (v11) circle [radius=0.11];
\draw[fill] (v12) circle [radius=0.11];
\draw[fill] (v13) circle [radius=0.11];
\draw[fill] (v15) circle [radius=0.11];

\draw  plot[smooth, tension=1] coordinates {(v1) (-2.5,5)  (-3.5,4) (v2)}[postaction={decorate, decoration={markings,mark=at position .5 with {\arrow[black]{stealth}}}}];
\draw  plot[smooth, tension=1] coordinates {(v1) (-2,4.5)  (-3,3.5) (v2)}[postaction={decorate, decoration={markings,mark=at position .5 with {\arrow[black]{stealth}}}}];

\draw  (-3,7.5) -- (-1.5,5.5)[postaction={decorate, decoration={markings,mark=at position .5 with {\arrow[black]{stealth}}}}];
\draw  (-2,7.5) -- (-1.5,5.5)[postaction={decorate, decoration={markings,mark=at position .5 with {\arrow[black]{stealth}}}}];
\draw  (-0.5,7.5) -- (-1.5,5.5)[postaction={decorate, decoration={markings,mark=at position .5 with {\arrow[black]{stealth}}}}];

\draw  (-4.8,7.5)-- (-4,3)[postaction={decorate, decoration={markings,mark=at position .5 with {\arrow[black]{stealth}}}}];
\draw  (-1.5,5.5)  -- (-1.5,1)[postaction={decorate, decoration={markings,mark=at position .5 with {\arrow[black]{stealth}}}}];
\draw  (-4,3) -- (-1.5,1)[postaction={decorate, decoration={markings,mark=at position .5 with {\arrow[black]{stealth}}}}];

\draw (1,7.5)--(1.5,5.5)[postaction={decorate, decoration={markings,mark=at position .5 with {\arrow[black]{stealth}}}}];
\draw  (2.5,7.5) -- (1.5,5.5)[postaction={decorate, decoration={markings,mark=at position .5 with {\arrow[black]{stealth}}}}];
\draw  (1.5,5.5) -- (-1.5,1)[postaction={decorate, decoration={markings,mark=at position .5 with {\arrow[black]{stealth}}}}];
\draw  (-4,3) -- (-4.5,-1)[postaction={decorate, decoration={markings,mark=at position .5 with {\arrow[black]{stealth}}}}];
\draw  (-1.5,1) -- (-2,-1)[postaction={decorate, decoration={markings,mark=at position .65 with {\arrow[black]{stealth}}}}];
\draw  (0,-1) -- (-1.5,1)[postaction={decorate, decoration={markings,mark=at position .5 with {\arrowreversed[black]{stealth}}}}];
\draw  (1.5,5.5) -- (2,1.5)[postaction={decorate, decoration={markings,mark=at position .5 with {\arrow[black]{stealth}}}}];
\draw  (2,1.5) -- (2,-1)[postaction={decorate, decoration={markings,mark=at position .5 with {\arrow[black]{stealth}}}}];

\node (v17) at (6.5,7.5) {};
\node [right][scale=0.8] at (6.5,3.5) {$19$};
\node (v16) at (6.5,-1) {};
\draw[fill] (v16) circle [radius=0.11];
\draw[fill] (v17) circle [radius=0.11];
\draw  (6.5,-1) -- (6.5,7.5)[postaction={decorate, decoration={markings,mark=at position .5 with {\arrowreversed[black]{stealth}}}}];
\node (v18) at (4.5,7.5) {};
\node [above][scale=0.8] at (4.5,7.5) {};
\node (v19) at (4.5,-1) {};
\node [below][scale=0.8] at (4.5,-1) {};
\node (v20) at (4.5,3.25) {};
\draw[fill] (v18) circle [radius=0.11];
\draw[fill] (v19) circle [radius=0.11];

\draw[fill] (v20) circle [radius=0.11];
\draw  (4.5,-1) -- (4.5,3.25)[postaction={decorate, decoration={markings,mark=at position .5 with {\arrowreversed[black]{stealth}}}}];
\draw  (4.5,3.24) -- (4.5,7.5)[postaction={decorate, decoration={markings,mark=at position .5 with {\arrowreversed[black]{stealth}}}}];

\node [scale=0.8]at (-5,6.5) {$1$};
\node [scale=0.8]at (-3,6.5) {$2$};
\node [scale=0.8]at (-1.5,7) {$3$};
\node [scale=0.8]at (-0.5,6.5) {$4$};
\node [scale=0.8]at (0.6,6.5) {$10$};
\node [scale=0.8]at (2.6,6.5) {$11$};
\node [scale=0.8]at (5,6) {$17$};
\node [scale=0.8]at (-4.7,0.5) {$7$};
\node [scale=0.8]at (-2.5,0) {$13$};
\node [scale=0.8]at (0,0) {$14$};
\node [scale=0.8]at (2.5,0) {$16$};
\node [scale=0.8]at (5,0.5) {$18$};
\end{tikzpicture}

Figure $9$. A POP-graph.
\end{center}

\section{\UPO graph}
In this section, we introduce the key notion in this paper, that is, the notion of a UPO-graph and show some of its basic properties.

We first introduce some notations. Let $S$ be a finite set with a linear order $<$.  Given a subset $X\subseteq S$, we write $X^-=\min{X}$ and $X^+=\max{X}$. The  convex hull of $X$ in $S$ is $\overline{X} = \{y\in S| X^-\leq y\leq X^+\}$.

\begin{defn}\label{upo}
An \emph{upward planar order} on a directed graph $G$ is a linear order $\prec$ on $E(G)$, such that

$(U_1)$ $e_1\rightarrow e_2$ implies that $e_1\prec e_2$;

$(U_2)$ for any vertex $v$, $\overline{I(v)}\cap \overline{O(v)}=\emptyset$ and $\overline{E(v)}=\overline{I(v)}\sqcup \overline{O(v)}$;

$(U_3)$ for any two  vertices $v_1$ and $v_2$, $I(v_1)\cap \overline{I(v_2)}\neq\emptyset$ implies that $\overline{I(v_1)}\subseteq \overline{I(v_2)}$, and $O(v_1)\cap \overline{O(v_2)}\neq\emptyset$ implies that $\overline{O(v_1)}\subseteq \overline{O(v_2)}$.
\end{defn}

Figure $10$ is a typical example motivating this definition.
\begin{center}
 \begin{tikzpicture}[scale=0.55]

\draw[dashed]  (-4,2.5) rectangle (13,-1.5);
\node [scale=0.8, above] (v1) at (-2.5,2.5) {1};
\node [scale=0.8,below] (v2) at (-2.5,-1.5) {};
\node (v3) at (1,0.5) {};
\draw [fill](v3) circle [radius=0.11];
\node [scale=0.8,below](v4) at (-0.2,-1.5) {3};
\node [scale=0.8,above](v5) at (2.5,2.5) {5};
\node[scale=0.8,above] (v7) at (4,2.5) {6};
\node (v6) at (3,0.5) {};
\draw [fill](v6) circle [radius=0.11];
\node [scale=0.8,below](v8) at (3,-1.5) {7};
\node (v9) at (6.5,1.5) {};
\draw [fill](v9) circle [radius=0.11];
\node [scale=0.8,below](v10) at (4.5,-1.5) {8};
\node [scale=0.8,below](v11) at (5.5,-1.5) {9};
\node [scale=0.8,below](v13) at (6.2,-1.5) {10};
\node[scale=0.8,below] (v14) at (7.2,-1.5) {11};
\node[scale=0.8,below] (v15) at (8,-1.5) {12};
\node [scale=0.8,above](v16) at (9,2.5) {13};
\node [scale=0.8,above](v23) at (10.6,2.5) {15};
\node [scale=0.8,above](v18) at (11.5,2.5) {16};
\node (v17) at (10.5,0.5) {};
\draw [fill](v17) circle [radius=0.11];
\node (v12) at (6.5,0) {};
\draw [fill](v12) circle [radius=0.11];

\draw  (v1) -- (v2)[postaction={decorate, decoration={markings,mark=at position .5 with {\arrow[black]{stealth}}}}];
\draw  (-0.2,0.5) -- (v4)[postaction={decorate, decoration={markings,mark=at position .5 with {\arrow[black]{stealth}}}}];
\draw  (v5) -- (3,0.5)[postaction={decorate, decoration={markings,mark=at position .5 with {\arrow[black]{stealth}}}}];
\draw  (v7) -- (3,0.5)[postaction={decorate, decoration={markings,mark=at position .5 with {\arrow[black]{stealth}}}}];
\draw  (3,0.5)-- (v8)[postaction={decorate, decoration={markings,mark=at position .5 with {\arrow[black]{stealth}}}}];
\draw  (6.5,1.5)-- (4.5,-1.5)[postaction={decorate, decoration={markings,mark=at position .5 with {\arrow[black]{stealth}}}}];
\draw  (6.5,1.5) -- (v11)[postaction={decorate, decoration={markings,mark=at position .5 with {\arrow[black]{stealth}}}}];
\draw (6.5,0) -- (v13)[postaction={decorate, decoration={markings,mark=at position .5 with {\arrow[black]{stealth}}}}];
\draw (6.5,0) -- (v14)[postaction={decorate, decoration={markings,mark=at position .5 with {\arrow[black]{stealth}}}}];
\draw  (6.5,1.5) -- (v15)[postaction={decorate, decoration={markings,mark=at position .5 with {\arrow[black]{stealth}}}}];
\node [scale=0.8,below](v19) at (9,-1.5) {17};
\node (v22) at (10.5,1.5) {};
\draw [fill](v22) circle [radius=0.11];
\node [scale=0.8,above](v21) at (9.8,2.5) {14};
\draw  (v16) -- (10.5,0.5)[postaction={decorate, decoration={markings,mark=at position .5 with {\arrow[black]{stealth}}}}];
\draw  (v18) -- (10.5,0.5)[postaction={decorate, decoration={markings,mark=at position .5 with {\arrow[black]{stealth}}}}];
\draw  ((10.5,0.5) -- (v19)[postaction={decorate, decoration={markings,mark=at position .5 with {\arrow[black]{stealth}}}}];
\node [scale=0.8,below](v20) at (12,-1.5) {21};
\draw (10.5,0.5) --(v20)[postaction={decorate, decoration={markings,mark=at position .5 with {\arrow[black]{stealth}}}}];
\draw  (v21) -- (10.5,1.5)[postaction={decorate, decoration={markings,mark=at position .5 with {\arrow[black]{stealth}}}}];
\draw  (v23) -- (10.5,1.5)[postaction={decorate, decoration={markings,mark=at position .5 with {\arrow[black]{stealth}}}}];
\node (v24) at (10.5,-0.5) {};
\draw [fill](v24) circle [radius=0.11];
\node[scale=0.8,below] (v25) at (9.8,-1.5) {18};
\node [scale=0.8,below](v26) at (10.5,-1.5) {19};
\node [scale=0.8,below](v27) at (11.2,-1.5) {20};
\draw (10.5,-0.5) -- (v25)[postaction={decorate, decoration={markings,mark=at position .65 with {\arrow[black]{stealth}}}}];
\draw  (10.5,-0.5) -- (v26)[postaction={decorate, decoration={markings,mark=at position .65 with {\arrow[black]{stealth}}}}];
\draw  (10.5,-0.5) -- (v27)[postaction={decorate, decoration={markings,mark=at position .65 with {\arrow[black]{stealth}}}}];

\draw[dashed]  (-4,2.5) rectangle (13,-1.5);
\node [scale=0.8, above] (v1) at (-2.5,2.5) {1};
\node [scale=0.8,below] (v2) at (-2.5,-1.5) {};
\node (v3) at (-0.2,0.5) {};
\draw [fill](v3) circle [radius=0.11];
\node [scale=0.8,below](v4) at (1,-1.5) {};
\node [scale=0.8,above](v5) at (2.5,2.5) {};
\node[scale=0.8,above] (v7) at (4,2.5) {};
\node (v6) at (3,0.5) {};
\draw [fill](v6) circle [radius=0.11];
\node [scale=0.8,below](v8) at (3,-1.5) {};
\node (v9) at (6.5,1.5) {};
\draw [fill](v9) circle [radius=0.11];

\node (v17) at (10.5,0.5) {};
\draw [fill](v17) circle [radius=0.11];
\node (v12) at (6.5,0) {};
\draw [fill](v12) circle [radius=0.11];

\node [scale=0.8,above](v40) at (-1.2,2.5) {2};
\node (v41) at (-1.2,0.5) {};
\draw [fill](v41) circle [radius=0.11];
\draw (-1.2,2.5) -- (-1.2,0.5)[postaction={decorate, decoration={markings,mark=at position .5 with {\arrow[black]{stealth}}}}];

\node [scale=0.8,above](v50) at (1,2.5) {4};
\node (v51) at (1,0.5) {};
\draw [fill](v51) circle [radius=0.11];
\draw (1,2.5) -- (1,0.5)[postaction={decorate, decoration={markings,mark=at position .5 with {\arrow[black]{stealth}}}}];
\end{tikzpicture}

  Figure $10$. An upward planar order.
\end{center}

\begin{defn}
A directed graph together with an upward planar order is called an \emph{upward planarly ordered graph} or \emph{\UPO graph} for short.
\end{defn}
Any \UPO graph  must be acyclic. Obviously,
 $(U_1)=(P_1)$, say $\prec$ is a linear extension of $\rightarrow$.
$(U_2)$, under $(U_1)$, is equivalent to $O(v)^-=I(v)^++1$ (with respect to $\prec$) for any processive  vertex $v$.

The following lemma is an easy consequence of $(U_3)$.

\begin{lem}\label{property of UPG}
Let $G$ be an acyclic directed graph, $\prec$ a linear order on $E(G)$, and $v_1$ and $v_2$ be two vertices of $G$. If $\prec$ satisfies $(U_3)$, then

$(1)$ $\overline{I(v_1)}\cap \overline{I(v_2)}\neq \emptyset$ implies that either $\overline{I(v_1)}\subseteq \overline{I(v_2)}$ or $\overline{I(v_2)}\subseteq \overline{I(v_1)}$.

$(2)$  $\overline{O(v_1)}\cap \overline{O(v_2)}\neq \emptyset$ implies that either $\overline{O(v_1)}\subseteq \overline{O(v_2)}$ or $\overline{O(v_2)}\subseteq \overline{O(v_1)}$.
\end{lem}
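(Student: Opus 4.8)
The plan is to prove both statements by a symmetric argument, so I will focus on $(1)$; statement $(2)$ follows by reversing the roles of sources and sinks (equivalently, by applying $(1)$ to the opposite graph $G^{\mathrm{op}}$ with the reversed linear order). So suppose $\overline{I(v_1)}\cap\overline{I(v_2)}\neq\emptyset$, and assume for contradiction that neither $\overline{I(v_1)}\subseteq\overline{I(v_2)}$ nor $\overline{I(v_2)}\subseteq\overline{I(v_1)}$. Since both sets are convex hulls, i.e. intervals in the linear order $\prec$, two intervals that intersect but where neither contains the other must ``properly overlap'': after possibly swapping $v_1$ and $v_2$ we may assume $I(v_1)^- \preceq I(v_2)^- \preceq I(v_1)^+ \preceq I(v_2)^+$ with at least one of the outer inequalities strict (and in fact $I(v_1)^- \prec I(v_2)^-$ and $I(v_1)^+ \prec I(v_2)^+$ in the ``properly overlapping'' case, the other cases being nested).

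The key step is to produce an edge that witnesses $(U_3)$ and forces a containment, contradicting the overlap. Consider the endpoints: since $\overline{I(v_1)}\cap\overline{I(v_2)}\neq\emptyset$, pick any $e$ in this intersection. The idea is to find an edge of $I(v_1)$ lying in $\overline{I(v_2)}$, or an edge of $I(v_2)$ lying in $\overline{I(v_1)}$, so that the hypothesis of $(U_3)$ applies directly. In the properly overlapping configuration, $I(v_2)^-$ satisfies $I(v_1)^- \prec I(v_2)^- \preceq I(v_1)^+$, hence $I(v_2)^- \in \overline{I(v_1)}$; since $I(v_2)^-\in I(v_2)$, this gives $I(v_2)\cap\overline{I(v_1)}\neq\emptyset$, so by $(U_3)$ we get $\overline{I(v_2)}\subseteq\overline{I(v_1)}$ — contradicting that the overlap was proper (we would need $I(v_2)^+ \preceq I(v_1)^+$, false). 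The nested-but-not-equal and equal-endpoint cases are handled the same way: if $\overline{I(v_1)}$ and $\overline{I(v_2)}$ intersect, then one of $I(v_1)^\pm$ lies in $\overline{I(v_2)}$ or one of $I(v_2)^\pm$ lies in $\overline{I(v_1)}$ (because an interval meeting another interval must have an endpoint inside the other, or vice versa), and $(U_3)$ then yields exactly one of the two desired inclusions.

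The main obstacle, and the only place requiring care, is the purely order-theoretic bookkeeping: verifying that ``two intervals in a linear order intersect'' implies ``some endpoint of one lies in the other,'' and checking that in every sub-case the conclusion of $(U_3)$ (which gives $\overline{I(v_1)}\subseteq\overline{I(v_2)}$ or $\overline{I(v_2)}\subseteq\overline{I(v_1)}$) is consistent with, rather than contradicting, the case hypothesis — so that no genuine contradiction arises and we simply land on one of the two alternatives in the statement. I expect this to be short: the claim is essentially that $(U_3)$, which a priori only controls nesting when an actual edge (not just a hull element) of one vertex lies in the hull of the other, already forces the Helly-type / laminar behavior for the hulls themselves, and the bridge is the elementary observation that the minimum or maximum of an interval is itself an edge of the corresponding $I(v_i)$. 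No use of $(U_1)$ or $(U_2)$ is needed; acyclicity of $G$ is not used either beyond what is implicit in having a well-defined linear order.
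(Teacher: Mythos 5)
Your proof is correct and follows essentially the same route as the paper: since $\overline{I(v_1)}$ and $\overline{I(v_2)}$ are intervals, a nonempty intersection forces an endpoint ($I(v_1)^{\pm}$ or $I(v_2)^{\pm}$) of one hull to lie in the other, and that endpoint is an actual edge of $I(v_1)$ or $I(v_2)$, so $(U_3)$ yields the containment. The initial proof-by-contradiction framing is unnecessary overhead, as your own closing paragraph notes, but the core argument matches the paper's.
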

\begin{proof}
We only prove $(1)$. The proof of $(2)$ is similar. Both $\overline{I(v_1)}$ and $\overline{I(v_2)}$ are intervals of $(E(G),\prec)$, then $\overline{I(v_1)}\cap \overline{I(v_2)}\neq \emptyset$ implies that either $I(v_1)^+\in \overline{I(v_2)}$ or $I(v_2)^+\in \overline{I(v_1)}$. In the former case, notice that $I(v_1)^+\in I(v_1)$, so $I(v_1)\cap \overline{I(v_2)}\neq \emptyset$. By $(U_3)$, $\overline{I(v_1)}\subset \overline{I(v_2)}$. Similarly, in the latter case, $\overline{I(v_2)}\subset \overline{I(v_1)}$.
\end{proof}
An \emph{embedding} of directed graphs  $\phi\colon G_1\rightarrow G_2$ consists of a pair of injections $\phi_0\colon V(G_1) \rightarrow V(G_2)$  and $\phi_1\colon E(G_1)\rightarrow E(G_2)$, such that $s(\phi_1(e))=\phi_0(s(e))$ and $t(\phi_1(e))=\phi_0(t(e))$ for any $e\in E(G_1)$. In this case, $G_1$ is called a \emph{subgraph} of $G_2$. We freely identify the vertices and edges of $G_1$ with their images, and view $V(G_1)\subseteq V(G_2)$, $E(G_1)\subseteq E(G_2)$.

The following proposition shows that $(U_1)$ and $(U_3)$ are hereditary.

\begin{prop}\label{restriction}
Let $G$ be an acyclic directed graph with a linear order $\prec$ on $E(G)$, $H$ a subgraph of $G$ and $\prec_H$ the linear order on $E(H)$ induced from $\prec$. Then

$(1)$ $\prec$ satisfies $(U_1)$ implies that $\prec_H$ satisfies $(U_1)$.

$(2)$ $\prec$ satisfies $(U_3)$ implies that $\prec_H$ satisfies $(U_3)$.
\end{prop}

\begin{proof}
$(1)$ is a direct consequence of the facts that for $e_1, e_2\in E(H)$, $e_1\rightarrow e_2$ in $H$ if and only if $e_1\rightarrow e_2$ in $G$, and that $e_1\prec_H e_2$ if and only if $e_1\prec e_2$.

Now we prove $(2)$ by contradiction. Suppose there exist $v_1,v_2\in V(H)$, such that  $I_H(v_1)\cap \overline{I_H(v_2)}\neq \emptyset$ and $\overline{I_H(v_1)}\not\subseteq \overline{I_H(v_2)}$.
On one hand,  $I_H(v_1)\cap \overline{I_H(v_2)}\neq \emptyset$ implies that $I_G(v_1)\cap \overline{I_G(v_2)}\neq \emptyset$, then by $(U_3)$ of $\prec$,  $\overline{I_G(v_1)}\subseteq \overline{I_G(v_2)}$.

On the other hand, we must have $\overline{I_G(v_2)}\subseteq \overline{I_G(v_1)}$. In fact, $I_H(v_1)\cap \overline{I_H(v_2)}\neq \emptyset$ means that there is an edge $e\in I_H(v_1)$ such that $I_H(v_2)^-\preceq_H e\preceq_H I_H(v_2)^+$, and $\overline{I_H(v_1)}\not\subseteq \overline{I_H(v_2)}$  means that  there is an edge $h\in I_H(v_1)$ such that $h\prec_H I_H(v_2)^-$ or $I_H(v_2)^+\prec_H h$. In the former case, $I_H(v_2)^-\in [h,e]\subseteq \overline{I_H(v_1)}$. In the latter case, $I_H(v_2)^+\in [e,h]\subseteq \overline{I_H(v_1)}$. So in both cases, $I_H(v_2)\cap \overline{I_H(v_1)}\neq \emptyset$, and hence $I_G(v_2)\cap \overline{I_G(v_1)}\neq \emptyset$. Then by $(U_3)$ of $\prec$,  $\overline{I_G(v_2)}\subseteq \overline{I_G(v_1)}$.

In summary, $\overline{I_G(v_1)}= \overline{I_G(v_2)}$, that is, $v_1=v_2$, which contradicts the assumption $\overline{I_H(v_1)}\not\subseteq \overline{I_H(v_2)}$.
Similarly, we can show that $O_H(v_1)\cap \overline{O_H(v_2)}\neq \emptyset$ implies $\overline{O_H(v_1)}\subseteq \overline{O_H(v_2)}$.
\end{proof}

Let $(G,\prec)$ be a \UPO graph and $v$ a vertex of $G$. We set
\begin{equation*}U(v)=
\begin{cases}
\ \{w|w\in V(G),\ \overline{O(v)}\subsetneq \overline{O(w)}\},&  \text{if}\ O(v)\neq\emptyset;\\
\  \emptyset,&  \text{otherwise};\\
\end{cases}
\end{equation*}
\begin{equation*}D(v)=
\begin{cases}
\ \{w|w\in V(G),\ \overline{I(v)}\subsetneq \overline{I(w)}\},&  \text{if}\ I(v)\neq\emptyset;\\
\  \emptyset,&  \text{otherwise}.\\
\end{cases}
\end{equation*}

We define an order $<$ on $U(v)$ as follows. For any $w_1,w_2\in U(v)$, $w_1<w_2$ if $\overline{O(w_1)}\subsetneq \overline{O(w_2)}$. By Lemma \ref{property of UPG}, $<$ is a linear order on $U(v)$. Similarly, $D(v)$ is a linearly ordered set under the order that $w_1<w_2$ in $D(v)$ if $\overline{I(w_1)}\subsetneq \overline{I(w_2)}$.

The following theorem shows that when suitable vertices and edges are added to a \UPO graph, the resulting graph will admit a unique extended upward planar order.

\begin{thm} \label{L_1_ext}
Let $(G,\prec_G)$ be a \UPO graph.  $S(G)$ and $T(G)$ are the sets of sources and sinks of $G$, respectively.  Assume that  $\Gamma$ is a directed graph obtained by adding a new edge $e$ to $G$ in any one of the following ways (see Fig. $11$):

 $(1)$  $t(e)\in S(G)$, $U(t(e))= \emptyset$ in $(G,\prec_G)$, and $s(e)\not\in V(G)$;

 $(2)$  $t(e)\in S(G)$, $U(t(e))\ne \emptyset$ in $(G,\prec_G)$, and $s(e)= U(t(e))^-$;

$(3)$  $s(e)\in T(G)$, $D(s(e))= \emptyset$ in $(G,\prec_G)$, and $t(e)\not\in V(G)$;

 $(4)$  $s(e)\in T(G)$, $D(s(e))\ne \emptyset$ in $(G,\prec_G)$, and $t(e)= D(s(e))^-$.

\noindent Then there exists a unique upward planar order $\prec_\Gamma$ on $\Gamma$, whose restriction is $\prec_G$.
 \end{thm}

\begin{center}
\begin{tikzpicture}[scale=0.6]

\node (v2) at (-3.5,0) {};
\node (v3) at (-4.5,-2) {};
\node (v4) at (-2.5,-2) {};
\node(v1) at (-3.5,2.5) {};
\node [above] at (-3.5,2.5) {$s(e)\not\in V(G)$};
\draw[fill] (v2) circle [radius=0.07];
\draw[fill] (v1) circle [radius=0.07];
\draw [dashed] (-3.5,2.5) -- (-3.5,0)[postaction={decorate, decoration={markings,mark=at position .5 with {\arrow[black]{stealth}}}}];
\draw  (-3.5,0)-- (-4.5,-2)[postaction={decorate, decoration={markings,mark=at position .5 with {\arrow[black]{stealth}}}}];
\draw  (-3.5,0) -- (-2.5,-2)[postaction={decorate, decoration={markings,mark=at position .5 with {\arrow[black]{stealth}}}}];
\node at (-3.5,-3.5) {$(1)$};

\node (v8) at (0.5,0) {};
\node (v9) at (-0.5,-2.25) {};
\node (v10) at (1.5,-2.25) {};
\node (v5) at (0.5,2.5) {};
\node (v6) at (-1,1) {};
\node (v7) at (2,1) {};
\draw  (0.5,2.5)-- (v6)[postaction={decorate, decoration={markings,mark=at position .5 with {\arrow[black]{stealth}}}}];
\draw  (0.5,2.5) -- (v7)[postaction={decorate, decoration={markings,mark=at position .5 with {\arrow[black]{stealth}}}}];
\draw  (0.5,0) -- (v9)[postaction={decorate, decoration={markings,mark=at position .5 with {\arrow[black]{stealth}}}}];
\draw  (0.5,0) -- (v10)[postaction={decorate, decoration={markings,mark=at position .5 with {\arrow[black]{stealth}}}}];
\draw[fill] (v5) circle [radius=0.07];
\draw[fill] (v8) circle [radius=0.07];
\node at (-4,1.5) {$e$};
\draw [dashed] (0.5,2.5) -- (0.5,0)[postaction={decorate, decoration={markings,mark=at position .5 with {\arrow[black]{stealth}}}}];
\node at (0,1) {$e$};
\node at (0.5,-3.5) {$(2)$};

\node at (-3.5,-2) {$\cdots$};
\node at (0.5,-2) {$\cdots$};
\node at (0.5,3) {$U(t(e))^-$};
\node (v12) at (5,0.5) {};
\node (v14) at (5,-2) {};
\node (v11) at (4,2.75) {};
\node (v13) at (6,2.75) {};
\draw[fill] (v12) circle [radius=0.07];
\draw[fill] (v14) circle [radius=0.07];
\draw  (v11)-- (5,0.5)[postaction={decorate, decoration={markings,mark=at position .5 with {\arrow[black]{stealth}}}}];
\draw  (v13) -- (5,0.5)[postaction={decorate, decoration={markings,mark=at position .5 with {\arrow[black]{stealth}}}}];
\draw [dashed] (5,0.5) -- (5,-2)[postaction={decorate, decoration={markings,mark=at position .5 with {\arrow[black]{stealth}}}}];
\node at (4.5,-1) {$e$};
\node [below] at (5,-2) {$t(e)\not\in V(G)$};
\node at (5,-3.5) {$(3)$};

\node at (5,2.5) {$\cdots$};
\node (v16) at (9,0.5) {};
\node (v15) at (7.5,2.75) {};
\node (v17) at (10,2.75) {};
\node (v19) at (9,-2) {};
\node (v20) at (7.5,-0.5) {};
\node (v18) at (10,-0.5) {};
\draw  (v15) --(9,0.5) [postaction={decorate, decoration={markings,mark=at position .5 with {\arrow[black]{stealth}}}}];
\draw  (v17) -- (9,0.5) [postaction={decorate, decoration={markings,mark=at position .5 with {\arrow[black]{stealth}}}}];
\draw  (v18) -- (9,-2)[postaction={decorate, decoration={markings,mark=at position .5 with {\arrow[black]{stealth}}}}];
\draw  (v20) -- (9,-2)[postaction={decorate, decoration={markings,mark=at position .5 with {\arrow[black]{stealth}}}}];
\draw[fill] (v16) circle [radius=0.07];
\draw[fill] (v19) circle [radius=0.07];
\draw [dashed]  (9,0.5) -- (9,-2)[postaction={decorate, decoration={markings,mark=at position .5 with {\arrow[black]{stealth}}}}];
\node at (8.5,-0.5) {$e$};
\node at (9,-3.5) {$(4)$};
\node at (9,-2.5) {$D(s(e))^-$};
\node at (8.75,2.5) {$\cdots$};
\end{tikzpicture}

\begin{center}
Figure $11$. Local configurations of $e$ in Theorem \ref{L_1_ext}.
\end{center}
\end{center}
 \begin{proof}
 The uniqueness follows from $(U_2)$ of $\prec_\Gamma$. In fact, in cases (1) and (2), $e=O(t(e))^--1$; and in cases (3) and (4), $e=I(s(e))^++1$.
In this way, the linear order $\prec_\Gamma$ on $E(\Gamma)=E(G)\sqcup \{e\}$ is uniquely defined. To show the existence it suffices to show that $\prec_\Gamma$ is an upward planar order.

$(i)$ First we check $(U_1)$ for $\prec_\Gamma$. In case $(1)$, $e\in I(\Gamma)$, so we only need to show that  $e\rightarrow e_1$ implies that $e\prec_\Gamma e_1$. In fact,  $e\rightarrow e_1$, implies that there is an edge $e_2\in O(t(e))$ such that $e_2\rightarrow e_1$ in $G$, so $e_2\prec_G e_1$ and hence $e_2\prec_\Gamma e_1$. Then  $e=O(t(e))^--1\prec_\Gamma e_2\prec_\Gamma e_1$.

In case $(2)$, we only need to show that $e_1\rightarrow e$ and $e\rightarrow e_2$ implies that $e_1\prec_\Gamma e_2$. Set $w=U(t(e))^-$. On one hand, $\overline{O(t(e))}\subsetneq \overline{O(w)}$ and $e=O(t(e))^--1$ implies that $O(w)^-\prec_\Gamma e$. By $e_1\rightarrow e$, $e_1\rightarrow O(w)^-$ in $G$, so $e_1\prec_G O(w)^-$ and hence $e_1\prec_\Gamma O(w)^-\prec_\Gamma e$.
On the other hand, $\overline{O(t(e))}\subsetneq \overline{O(w)}$ implies the non-existence of a direct path in $G$ that starts from $t(e)$ and ends with $w$, so just as case $(1)$, $e\rightarrow e_2$ implies that there is an edge $e_3\in O(t(e))$ such that $e_3\rightarrow  e_2$ in $G$, which implies that $e\prec_\Gamma e_3\prec_\Gamma e_2$. In summary,  $e_1\prec_\Gamma O(w)^-\prec_\Gamma e\prec_\Gamma e_3\prec_\Gamma e_2$. The proofs in cases $(3)$ and $(4)$ are similar.

$(ii)$ Under $(U_1)$, $(U_2)$ is equivalent to $O(v)^-=I(v)^++1$ which is obvious from the construction of $\prec_\Gamma$.

$(iii)$ Now we are left to check $(U_3)$.
If $v_1=v_2$ or $v_1\in V(\Gamma)-V(G)$ or  $v_2\in V(\Gamma)-V(G)$, $(U_3)$ is trivial.
So we assume $v_1$ and $v_2$ are different vertices of $G$ such that $I_\Gamma(v_1)\cap \overline{I_\Gamma(v_2)}\neq \emptyset$.  There are two possibilities for $v_1$.

If $e\not\in I_\Gamma(v_1)$, then $I_\Gamma(v_1)=I_G(v_1)$. So $I_G(v_1)\cap \overline{I_G(v_2)}= I_G(v_1)\cap (\overline{I_G(v_2)}\sqcup \{e\})\supseteq I_\Gamma(v_1)\cap \overline{I_\Gamma(v_2)}\neq\emptyset$. Applying $(U_3)$ for $\prec_G$, we have $\overline{I_G(v_1)}\subseteq \overline{I_G(v_2)}$.  Then $I_\Gamma(v_1)=I_G(v_1)\subseteq \overline{I_G(v_1)}\subseteq \overline{I_G(v_2)}\subseteq \overline{I_\Gamma(v_2)}$, which implies that $\overline{I_\Gamma(v_1)}\subseteq \overline{I_\Gamma(v_2)}$.

Otherwise, $e\in I_\Gamma(v_1)$, there are three cases.  In cases $(1)$ and $(2)$, $I_\Gamma(v_1)= \overline{I_\Gamma(v_1)}=\{e\}$ and hence $\overline{I_\Gamma(v_1)}\subseteq \overline{I_\Gamma(v_2)}$.
In case (4), $v_1=t(e)=D(s(e))^-$. Note that $I_\Gamma(v_1)=I_G(v_1)\sqcup\{e\}$, so $I_\Gamma(v_1)\cap \overline{I_\Gamma(v_2)}\neq\emptyset$ implies that $I_G(v_1)\cap \overline{I_\Gamma(v_2)}\neq \emptyset$ or $e\in \overline{I_\Gamma(v_2)}$.
In the former case, since $e\not\in I_G(v_1)$, so $I_G(v_1)\cap \overline{I_G(v_2)}\neq\emptyset$, which, by applying $(U_3)$ for $\prec_G$, implies that $\overline{I_G(v_1)}\subseteq \overline{I_G(v_2)}$, and hence $\overline{I_\Gamma(v_1)}\subseteq \overline{I_\Gamma(v_2)}$.
In the later case, note that $e=I_G(s(e))^++1$ and $e\not\in I_\Gamma(v_2)$ (by $v_1\neq v_2$), so $e\in \overline{I_\Gamma(v_2)}$ implies that $I_G(s(e))^+\in \overline{I_\Gamma(v_2)}$. So $I_G(s(e))\cap \overline{I_G(v_2)}\neq\emptyset$, which, by applying $(U_3)$ for $\prec_G$, implies that $\overline{I_G(s(e))}\subseteq \overline{I_G(v_2)}$, that is, $v_2\in D(s(e))$. Since $v_1=D(s(e))^-$, so $\overline{I_G(v_1)}\subseteq \overline{I_G(v_2)}$, which implies that $\overline{I_\Gamma(v_1)}\subseteq \overline{I_\Gamma(v_2)}$.

Dually, we can show that for any different $v_1, v_2\in V(G)$, $\overline{O_\Gamma(v_1)}\subseteq \overline{O_\Gamma(v_2)}$ provided that $O_\Gamma(v_1)\cap \overline{O_\Gamma(v_2)}\neq\emptyset$. The proof is completed.
 \end{proof}
\section{Characterizations of POP-graphs}
In this section, we introduce some constraints for UPO-graphs and list some lemmas which are useful for proving new characterizations of POP-graphs.

\begin{lem}\label{progressive}
Let $G$ be a processive graph with a linear order $\prec$ on $E(G)$. Then

$(1)$ for any vertices $v_1\ne v_2$ of $G$, $I(v_1)\cap \overline{I(v_2)}\neq\emptyset$ implies that $v_2$ is processive.

$(2)$ for any vertices $v_1\ne v_2$ of $G$, $O(v_1)\cap \overline{O(v_2)}\neq\emptyset$ implies that $v_2$ is processive.
\end{lem}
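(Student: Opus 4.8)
The plan is to argue by contradiction, using nothing more than the definition of a progressive graph (every source and every sink has degree one) together with the boundary convention $\overline{\emptyset}=\emptyset$ for the convex hull. Since part $(2)$ is obtained from part $(1)$ by reversing all arrows (swap $I$ with $O$, sources with sinks, and $t$ with $s$), I would only write out the argument for $(1)$ and then remark that $(2)$ is dual.

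Assume $v_1\ne v_2$ and $I(v_1)\cap\overline{I(v_2)}\ne\emptyset$, and suppose toward a contradiction that $v_2$ is not progressive, i.e. $v_2$ is a source or a sink. The source case is disposed of immediately: if $v_2$ is a source then $I(v_2)=\emptyset$, hence $\overline{I(v_2)}=\emptyset$, which contradicts $I(v_1)\cap\overline{I(v_2)}\ne\emptyset$. For the sink case, use that $G$ is progressive: a sink of $G$ has degree one, so $I(v_2)$ consists of a single edge $f$ with $t(f)=v_2$, and therefore $\overline{I(v_2)}=\{f\}$. Now $I(v_1)\cap\overline{I(v_2)}\ne\emptyset$ forces $f\in I(v_1)$, i.e. $t(f)=v_1$; combined with $t(f)=v_2$ this yields $v_1=v_2$, contradicting the hypothesis $v_1\ne v_2$. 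This exhausts both cases, so $v_2$ must be progressive, proving $(1)$; part $(2)$ follows verbatim with $I$ replaced by $O$, "sink" by "source", and $t$ by $s$.

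There is no genuine obstacle here — the statement is elementary once one unwinds the definitions. The only two points that deserve a sentence of care are: first, the convention that $\overline{\emptyset}=\emptyset$ (consistent with its use in $(U_2)$), which is precisely what makes the "bad" endpoint of $v_2$ impossible; and second, the observation that in a \emph{progressive} graph a sink has exactly one incoming edge, so $\overline{I(v_2)}$ is a singleton rather than a proper interval, which is what lets the intersection condition collapse to an equality of vertices.
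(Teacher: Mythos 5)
Your proof is correct and follows essentially the same route as the paper: the paper compresses the argument into the observation that $v_1\ne v_2$ and $I(v_1)\cap\overline{I(v_2)}\ne\emptyset$ force the degree of $v_2$ to be different from one, whence $v_2$ is progressive since $G$ is progressive, while you make the same point via an explicit source/sink case split. Your version is slightly more detailed (and arguably clearer about why degree one is impossible), but there is no substantive difference.
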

\begin{proof}
We only prove $(1)$. By assumption $v_1\neq v_2$ and $I(v_1)\cap \overline{I(v_2)}\neq\emptyset$, we know  that the degree of $v_2$ is not equal to one.  Since $G$ is a processive graph,   $v_2$ must be processive. The proof of $(2)$ is similar.
\end{proof}
\begin{defn}
A UPO-graph is called  \emph{anchored} if $(A)$ for any different vertices $v_1$, $v_2$ of $G$, $\emptyset\neq\overline{I(v_1)}\subset \overline{I(v_2)}$ implies that $v_1\rightarrow v_2$; $\emptyset\neq\overline{O(v_1)}\subset \overline{O(v_2)}$ implies that $v_2\rightarrow v_1$, where $v\rightarrow w$ denotes that there is a directed path starting from $v$ and ending with $w$.
\end{defn}
In Section $6$, we will show that any UPO-graph has an upward planar drawing, which is called the \emph{geometric realization} of the UPO-graph. In the geometric realization of an anchored UPO-graph, all sources and sinks are drawn on the boundary of the external face; see Remark \ref{Re2} for explanation.

\begin{lem}\label{property of PPG}
Let $(G,\prec)$ be a POP-graph. Then $\prec$ satisfies  $(A)$.
\end{lem}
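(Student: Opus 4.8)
The plan is to show that the planar-order axioms $(P_1)$, $(P_2)$ on a POP-graph $(G,\prec)$ force the anchoring condition $(A)$. I will only argue the first implication, $\emptyset\neq\overline{I(v_1)}\subsetneq\overline{I(v_2)}\Rightarrow v_1\rightarrow v_2$; the statement for outgoing edges is dual (reverse all arrows, which turns a planar order into a planar order on the opposite graph). So fix distinct vertices $v_1,v_2$ with $\overline{I(v_1)}\subsetneq\overline{I(v_2)}$ and $I(v_1)\neq\emptyset$. Since $\overline{I(v_2)}$ is a $\prec$-interval strictly containing the interval $\overline{I(v_1)}$, at least one endpoint of $I(v_2)$ lies strictly outside $\overline{I(v_1)}$ on a definite side; the two cases ($I(v_2)^-\prec I(v_1)^-$, or $I(v_1)^+\prec I(v_2)^+$) are themselves interchanged by the up-down symmetry, so I will treat, say, the case where there exists $f\in I(v_2)$ with $I(v_1)^+\prec f$.

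The key step is to produce a directed path from an edge into $v_1$ to an edge into $v_2$. Pick any $e\in I(v_1)$. Then $e\preceq I(v_1)^+\prec f$, and also $I(v_1)^-\preceq e$. Because $\overline{I(v_1)}\subseteq\overline{I(v_2)}$, we have $I(v_2)^-\preceq I(v_1)^-\preceq e$, so $e\in\overline{I(v_2)}$, i.e. $e$ lies strictly between $I(v_2)^-$ and $I(v_2)^+$ in the $\prec$-order, while $e$ and $f$ are both incoming edges; in particular $I(v_2)^-$ and $f$ both end at $v_2$, so $t(I(v_2)^-)=t(f)=v_2$. I want to apply $(P_2)$ (equivalently $(\widetilde{P_2})$) to a triple $e_1\prec e_2\prec e_3$ with $t(e_1)=s(e_3)$, which I do not yet have — the edges of $I(v_2)$ share a head, not a head-to-tail relation. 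The fix is to chase along the path inside $v_2$: since both $I(v_2)^-$ and $f$ enter $v_2$, and I can compare $e$ against them, I use the characterization $(\widetilde{P_2})$ after first reducing to a head-to-tail triple. Concretely, consider the smallest incoming edge of $v_2$ that is $\succ$ some fixed edge $e\in I(v_1)$; combined with an incoming edge of $v_2$ that is $\prec e$ (namely $I(v_2)^-$, which is $\prec e$ unless $e=I(v_2)^-$, a case handled separately using $I(v_1)^+\prec f$), one gets $I(v_2)^-\prec e\prec f$ — but $t(I(v_2)^-)\ne s(f)$, so I instead look one step earlier: there is an edge $g$ with $t(g)=s(I(v_2)^-)$... this is getting circular, so the honest approach is:

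I will in fact prove the contrapositive-flavoured statement by choosing the edge $e\in I(v_1)$ to be $I(v_1)^+$ and tracking where a directed path from $I(v_1)^+$ must go. By $(P_1)$, every edge reachable from $I(v_1)^+$ is $\succ I(v_1)^+$; the set of heads of such edges is "$v_1$ and everything above it". Suppose for contradiction $v_1\not\rightarrow v_2$. Then no edge of $I(v_2)$ is reachable from $I(v_1)^+$, and in particular $I(v_1)^+\not\to f$ for the edge $f\in I(v_2)$ with $I(v_1)^+\prec f$ found above. Now $I(v_1)^+\in I(v_1)$ has head $v_1$; take any $e'\in I(v_1)$, so $e'$ and $I(v_1)^+$ share head $v_1$. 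I apply $(\widetilde{P_2})$: we need a configuration $a\prec b\prec c$, $t(a)=s(c)$. The right triple is: let $c$ be the edge realizing $I(v_1)\to$ something, ... — the cleanest route, which I expect to be the actual argument, is to invoke $(U_3)$ together with Lemma \ref{property of UPG}: since $\overline{I(v_1)}\cap\overline{I(v_2)}\neq\emptyset$ we already know the convex hulls are nested ($(U_3)$ is part of being a POP-graph); the content of $(A)$ is to upgrade the hull-nesting to a path. So the main obstacle — and the crux of the proof — is exactly this upgrade: showing that if $\overline{I(v_1)}\subsetneq\overline{I(v_2)}$ then some directed path joins $v_1$ to $v_2$, using $(\widetilde{P_2})$ applied to the triple consisting of $I(v_1)^+$, a suitable edge strictly between it and $I(v_2)^+$ with head $v_2$, obtained by repeatedly invoking $(\widetilde{P_2})$ to "walk" the witness $f$ down to an edge whose tail is a vertex from which $v_1$ has already been shown reachable. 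I expect the write-up to do this walk as a finite induction on the $\prec$-position of the witness edge, terminating because $\prec$ is a finite linear order, with the base case handled by $(P_1)$ and each inductive step by one application of $(\widetilde{P_2})$; and then the dual argument for $O$.
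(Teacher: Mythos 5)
There is a genuine gap: your write-up never arrives at a proof. You correctly identify the obstacle --- $(\widetilde{P_2})$ needs a head-to-tail triple $e_1\prec e_2\prec e_3$ with $t(e_1)=s(e_3)$, whereas the edges of $I(v_2)$ only share a head --- but you never resolve it; the argument trails off into an unexecuted plan (``repeatedly invoking $(\widetilde{P_2})$ to walk the witness $f$ down'', ``I expect the write-up to do this walk as a finite induction''), with no definition of the induction, no proof that each step is legitimate, and no termination argument. The missing idea is simple and requires no walk at all: since $v_1\neq v_2$ and $I(v_1)\cap\overline{I(v_2)}\neq\emptyset$, the vertex $v_2$ has degree at least $2$, hence is progressive because $G$ is a progressive graph (this is Lemma \ref{progressive}), so $O(v_2)\neq\emptyset$. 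Now take any $e\in I(v_1)$. Since $e\notin I(v_2)$, one has the strict chain $I(v_2)^-\prec e\prec I(v_2)^+\prec O(v_2)^-$, the last inequality by $(P_1)$ applied to $I(v_2)^+\rightarrow O(v_2)^-$. The triple $\bigl(I(v_2)^-,\,e,\,O(v_2)^-\bigr)$ is exactly a $(\widetilde{P_2})$ configuration, because $t(I(v_2)^-)=v_2=s(O(v_2)^-)$. The alternative $I(v_2)^-\rightarrow e$ is impossible (it would give $I(v_2)^+\rightarrow e$, hence $I(v_2)^+\prec e$ by $(P_1)$, contradicting $e\prec I(v_2)^+$), so $e\rightarrow O(v_2)^-$, and since $t(e)=v_1$ and $s(O(v_2)^-)=v_2$ this yields $v_1\rightarrow v_2$.

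Two further remarks. First, your preliminary case split (an edge of $I(v_2)$ strictly below $I(v_1)^-$ versus strictly above $I(v_1)^+$) is both unnecessary and not justified as stated: the ``up--down symmetry'' exchanges incoming with outgoing edges (and would be used for the second half of $(A)$), but it does not interchange the two ends of the interval $\overline{I(v_2)}$, so those two cases are not dual to each other. Second, your closing suggestion that $(U_3)$ plus hull-nesting is the route is a detour: the paper's argument (and the one sketched above) goes directly from $(P_1)$, $(P_2)$ and progressiveness of $v_2$ to the path $v_1\rightarrow v_2$, with a single application of $(\widetilde{P_2})$.
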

\begin{proof}
We only prove the first part of $(A)$. The proof of the second part is similar and we omit it here.
Let $v_1\ne v_2$ be two vertices of $G$, such that $\overline{I(v_1)}\subset \overline{I(v_2)}$ and $\overline{I(v_1)}\neq \emptyset$.
Then $I(v_1)\cap \overline{I(v_2)}\neq\emptyset$, and therefore by Lemma \ref{progressive} $(1)$, $v_2$ is processive.

Take $e\in I(v_1)\subset \overline{I(v_2)}$. Then $I(v_2)^-\prec e\prec I(v_2)^+$, and hence $I(v_2)^-\prec e\prec O(v_2)^-$, where the last equality follows from $(P_1)$ and the fact $I(v_2)^+\to O(v_2)^-$. By $(P_2)$, $I(v_2)^-\rightarrow O(v_2)^-$ implies that either $I(v_2)^-\rightarrow e$ or $e\rightarrow O(v_2)^-$. If $I(v_2)^-\rightarrow e$, then $I(v_2)^+\rightarrow e$, and by $(P_1)$, $I(v_2)^+\prec e$, a contradiction. Thus we must have $e\rightarrow O(v_2)^-$, which implies that $v_1\rightarrow v_2$.
\end{proof}

\begin{lem}\label{A and U_4}
Let $(G,\prec)$ be a \UPO graph. If $G$ is a processive graph, then  $(A)$ is equivalent to the following condition:

$(U_4)$ for any processive vertex $v$ of $G$, $I(G)\cap \overline{O(v)}=\emptyset$ and $O(G)\cap \overline{I(v)}=\emptyset$.
\end{lem}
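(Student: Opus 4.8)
The plan is to prove the two directions of the equivalence separately, both for an arbitrary progressive vertex $v$; by the left-right (input-output) symmetry of the definitions it suffices to treat the ``input'' halves, i.e.\ the statements involving $\overline{O(v)}$ for $(U_4)$ and $\emptyset\neq\overline{I(v_1)}\subset\overline{I(v_2)}$ for $(A)$, and the dual halves follow verbatim. So really there are two implications to establish.

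\medskip

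\noindent\emph{From $(A)$ to $(U_4)$.}
First I would argue the contrapositive. Suppose $v$ is progressive but $I(G)\cap\overline{O(v)}\neq\emptyset$, so there is an input edge $f$ (i.e.\ $s(f)\in S(G)$, so $I(s(f))=\emptyset$) with $O(v)^-\preceq f\preceq O(v)^+$. Write $w=s(f)$. Since $f\in O(w)$ and $f\in\overline{O(v)}$, we have $O(w)\cap\overline{O(v)}\neq\emptyset$, and by $(U_3)$, $\overline{O(w)}\subseteq\overline{O(v)}$. Now $\overline{O(w)}\neq\emptyset$ (it contains $f$) and $v\neq w$ because $w$ is a source while $v$ is progressive; if the inclusion is strict, $(A)$ forces $v\rightarrow w$, which is impossible since $w$ is a source (it has no incoming edge and a directed path into $w$ must use one). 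So $\overline{O(w)}=\overline{O(v)}$; but I still get a contradiction, because then $O(v)^-\in\overline{O(w)}$ gives, combined with the fact that $w$ is a source so there is no path from $v$ to $w$ and no path from $w$ to $v$ either (again $w$ sourceness blocks paths into $w$)\,---\, actually the cleanest way is: $O(v)^-=O(w)^-=f$ since $w$ is a source forces $\overline{O(w)}$ to be as small as it can be and $f$ is its minimum; then the edge $O(v)^-=f$ starts at $w=s(f)$, yet it also starts at $v$, i.e.\ $v=s(O(v)^-)$, a contradiction. (I will need to double-check which of these last-step arguments is airtight; see the obstacle paragraph.) The same argument with $I$ in place of $O$, using sinks and the second half of $(U_3)$ and $(A)$, gives $O(G)\cap\overline{I(v)}=\emptyset$.

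\medskip

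\noindent\emph{From $(U_4)$ to $(A)$.}
Assume $(U_4)$ and suppose $v_1\neq v_2$ with $\emptyset\neq\overline{I(v_1)}\subset\overline{I(v_2)}$; I must produce a directed path $v_1\rightarrow v_2$. Since $\overline{I(v_1)}\neq\emptyset$, pick $e\in I(v_1)$; then $e\in\overline{I(v_2)}$, so $I(v_2)^-\preceq e\preceq I(v_2)^+$. Because $e\in I(v_1)$ and $v_1$ has an incoming edge, and because the inclusion is strict, $v_1$ cannot be equal to $v_2$ and in fact $v_2$ is progressive: if $v_2$ were a source, $I(v_2)=\emptyset$, contradicting $\overline{I(v_2)}\supsetneq\overline{I(v_1)}\neq\emptyset$; if $v_2$ were a sink then $O(v_2)=\emptyset$ but also a sink has degree one so $|I(v_2)|=1$, making $\overline{I(v_2)}$ a singleton that cannot strictly contain the nonempty $\overline{I(v_1)}$. (Alternatively, invoke Lemma~\ref{progressive}$(1)$ directly once we note $I(v_1)\cap\overline{I(v_2)}\neq\emptyset$.) Now I want to use $(U_4)$ applied to the progressive vertex $v_2$: it says $I(G)\cap\overline{O(v_2)}=\emptyset$ and $O(G)\cap\overline{I(v_2)}=\emptyset$. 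The second of these tells me $e\notin O(G)$, i.e.\ $e$ is not an output edge, so $t(e)$ is not a sink, i.e.\ $O(t(e))\neq\emptyset$. Then by $(U_2)$ for $v_1$, $O(v_1)^-=I(v_1)^++1$, and by $(U_2)$ for $v_2$, $O(v_2)^-=I(v_2)^++1$; since $I(v_1)^+\preceq I(v_2)^+$ (as $I(v_1)^+\in\overline{I(v_2)}$), we get $O(v_1)^-\preceq O(v_2)^-$. I would then chase the edge $O(v_1)^-$: it lies in $\overline{E(v_1)}$, and I want to locate it relative to $\overline{O(v_2)}$ and apply $(U_1)$/$(U_3)$ to build the path. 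The idea is that $\overline{I(v_1)}\subset\overline{I(v_2)}$ propagates ``upward'' along outgoing edges using $(U_2)$ and the absence of input edges inside $\overline{O(v_2)}$ (from $(U_4)$), forcing some outgoing edge of $v_1$ to lie in $\overline{O(v_2)}$, whence $(U_3)$ gives $\overline{O(v_1)}\subseteq\overline{O(v_2)}$; combined with a path argument this yields $v_1\rightarrow v_2$. A cleaner route may be to argue directly: from $I(v_2)^-\prec e$ or $e=I(v_2)^-$, together with $I(v_2)^+\to O(v_2)^-$ and $(P_1)$-type reasoning (which here we must re-derive from $(U_1)$–$(U_3)$ rather than cite, since $\prec$ is only assumed UPO, not POP) — but in fact it is legitimate here to run the \emph{same} argument as in the proof of Lemma~\ref{property of PPG}, because that proof only used $(P_1)$ (=$(U_1)$) and $(P_2)$, and under $(U_1)$,$(U_2)$ the condition $(U_4)$ is exactly what is needed to recover the relevant instance of $(P_2)$. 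So the key sub-step is: \emph{show that $(U_1)\wedge(U_2)\wedge(U_4)$ implies the instance of $(P_2)$ with $e_1=I(v_2)^-$, $e_3=O(v_2)^-$} (and dually), and then quote the argument of Lemma~\ref{property of PPG} verbatim.

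\medskip

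\noindent\textbf{The main obstacle} is the $(U_4)\Rightarrow(A)$ direction, and within it the point where one must convert the interval containment $\overline{I(v_1)}\subset\overline{I(v_2)}$ into an honest directed path $v_1\rightarrow v_2$ using only $(U_1)$, $(U_2)$, $(U_3)$, $(U_4)$ — we cannot simply cite $(P_2)$ since we are not assuming $(G,\prec)$ is a POP-graph (that equivalence is presumably assembled from these lemmas later). I expect the resolution is exactly the reduction described above: $(U_4)$ guarantees there are no input edges in $\overline{O(v)}$ and no output edges in $\overline{I(v)}$ for progressive $v$, which is precisely the ``local well-nestedness'' that, together with $(U_2)$'s identity $O(v)^-=I(v)^++1$, makes the middle edge $e$ with $I(v_2)^-\prec e\prec O(v_2)^-$ satisfy $t(I(v_2)^-)=s(O(v_2)^-)$ and lets the $(P_2)$-style dichotomy go through. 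A secondary, more bookkeeping-level obstacle is making sure the degenerate cases are handled — $e$ equal to $I(v_2)^-$ or to $I(v_1)^+$, $v_1$ or $v_2$ being a source/sink — but Lemma~\ref{progressive} disposes of the source/sink issues immediately, and the boundary-equality cases only make the required path shorter, so these are routine.
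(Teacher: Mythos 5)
Your $(A)\Rightarrow(U_4)$ direction is essentially sound and close to the paper's; the one fact you should state explicitly is that in a progressive graph every source has degree one, so $O(w)=\{f\}$ and $\overline{O(w)}=\{f\}$. This settles the equality case you were unsure about (if $\overline{O(w)}=\overline{O(v)}$ then $O(v)=\{f\}$, forcing $s(f)=v\neq w$), and in fact it makes the detour through $(U_3)$ unnecessary: $\{f\}\subsetneq\overline{O(v)}$ is immediate, and $(A)$ then gives $v\rightarrow w$, contradicting $w\in S(G)$ — which is exactly the paper's argument.

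The $(U_4)\Rightarrow(A)$ direction, however, has a genuine gap, and it is precisely the point you flag as the main obstacle. Your first sketch aims to show some outgoing edge of $v_1$ lies in $\overline{O(v_2)}$, but the relevant location is $\overline{I(v_2)}$: by $(U_2)$, $O(v_1)^-=I(v_1)^++1$, and $\overline{I(v_1)}\subset\overline{I(v_2)}$ places $O(v_1)^-$ strictly after $I(v_2)^-$ and at or before $I(v_2)^+$; moreover a containment $\overline{O(v_1)}\subseteq\overline{O(v_2)}$ would not by itself yield a directed path $v_1\rightarrow v_2$. Your second sketch — recover from $(U_1),(U_2),(U_4)$ the instance of $(P_2)$ with $e_1=I(v_2)^-$, $e_3=O(v_2)^-$ and then quote Lemma~\ref{property of PPG} — restates the problem rather than solving it: that instance of $(P_2)$ is exactly the assertion that every vertex with an incoming edge inside $\overline{I(v_2)}$ sends a directed path to $v_2$, which is the content of what is being proved, and nothing in your write-up establishes it. The missing argument (the paper's) is an explicit finite chase: set $w_0=v_1$ (progressive, as you showed); if $O(w_0)^-=I(v_2)^+$ then $w_0\rightarrow v_2$; otherwise $I(v_2)^-\prec O(w_0)^-\prec I(v_2)^+$, so $w_1=t(O(w_0)^-)$ satisfies $I(w_1)\cap\overline{I(v_2)}\neq\emptyset$, hence $\overline{I(w_1)}\subset\overline{I(v_2)}$ by $(U_3)$, and $w_1$ is not a sink by $(U_4)$ (otherwise its incoming edge $O(w_0)^-$ would be an output edge inside $\overline{I(v_2)}$); so the step can be repeated, and acyclicity plus finiteness force the chain $v_1\rightarrow w_1\rightarrow w_2\rightarrow\cdots$ to terminate at some $w_k$ with $O(w_k)^-\in I(v_2)$, giving $v_1\rightarrow v_2$. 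Without this induction (or an equivalent), the direction is not proved.
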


\begin{proof}
$(A)\Longrightarrow (U_4)$. We prove this by contradiction. Suppose there is a processive vertex $v$ of $G$ such that $I(G)\cap \overline{O(v)}\neq\emptyset$.  Take $i\in I(G)\cap \overline{O(v)}$ and set $w=s(i)$. Clearly, by definition, $\overline{O(w)}=\{i\}$. Then  $\overline{O(w)}\subset \overline{O(v)}$ and  $\overline{O(w)}\neq\emptyset$. Thus by $(A)$ we have $v\rightarrow w$, which contradicts  $i\in I(G)$.
Similarly, we can prove that for any processive vertex $v$ of $G$, $O(G)\cap \overline{I(v)}=\emptyset$.

$(U_4)\Longrightarrow (A)$. Let $v_1$, $v_2$ be two different vertices of $G$ with  $\overline{I(v_1)}\subset \overline{I(v_2)}$ and $\overline{I(v_1)}\neq \emptyset$, we want to prove that $v_1\rightarrow v_2$. First, $\overline{I(v_1)}\subset \overline{I(v_2)}$ and $\overline{I(v_1)}\neq \emptyset$ imply that $I(v_1)\cap \overline{I(v_2)}=I(v_1)\neq \emptyset$, and hence by Lemma \ref{progressive} $(1)$, $v_2$ is a processive vertex.

Next, we claim that $v_1$ must not be a sink. If not, by the fact that $G$ is a processive graph, $\overline{I(v_1)}=\{e\}\subseteq I(G)$, which implies that $I(G)\cap \overline{I(v_2)}\supseteq\{e\}\neq\emptyset$, contradicting $(U_4)$. So $v_1$ is a processive vertex.

Now by $(U_2)$, $\overline{I(v_1)}\subset \overline{I(v_2)}$ implies that $I(v_2)^-\prec O(v_1)^-=I(v_1)^++1\preceq I(v_2)^+$. If $O(v_1)^-=I(v_2)^+$, then $v_1\rightarrow v_2$ and we complete the proof. Otherwise, $I(v_2)^-\prec O(v_1)^-\prec I(v_2)^+$, which implies that $I(w_1)\cap \overline{I(v_2)}\neq \emptyset$,  where $w_1=t(O(v_1)^-)$.  Clearly, $v_1\rightarrow w_1$ and by $(U_3)$, $\overline{I(w_1)}\subset \overline{I(v_2)}$. If $w_1\rightarrow v_2$, then $v_1\rightarrow v_2$ and we complete the proof. Otherwise, note that $O(v_1)^-\in \overline{I(w_1)}\neq \emptyset$, similar as $v_1$, $w_1$ must not be a sink, so we can repeat the above procedure to find $w_2\rightarrow w_3\rightarrow \cdots$, until we find a $w_k$ $(k\geq 1)$ such that $w_k\rightarrow v_2$. Since $G$ has only finite vertices and the above procedure never reaches a sink, so such a $w_k$ must exist, and hence we have $v_1\rightarrow v_2$.

Similarly, we can prove that $\overline{O(v_1)}\subset \overline{O(v_2)}$ and $\overline{O(v_1)}\neq\emptyset$ imply $v_2\rightarrow v_1$.
\end{proof}

\begin{lem}\label{P_2 tilde and P_3}
Let $G$ be a processive graph and $\prec$ a linear order on $E(G)$. If $\prec$ satisfies $(U_1)$ and $(U_2)$, then the following  conditions are equivalent:

$(\widetilde{P_2})$  if $e_1\prec e_2\prec e_3$ and $t(e_1)=s(e_3)$, then either $e_1\rightarrow e_2$ or $e_2\rightarrow e_3$.

$(P_3)$ for any two vertices $v_1$ and $v_2$, $I(v_1)\cap \overline{I(v_2)}\neq\emptyset$ implies that $v_1\rightarrow v_2$ and $O(v_1)\cap \overline{O(v_2)}\neq\emptyset$ implies that $v_2\rightarrow v_1$.

\end{lem}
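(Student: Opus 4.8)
The plan is to prove the equivalence $(\widetilde{P_2}) \Longleftrightarrow (P_3)$ under the standing assumptions that $G$ is progressive and $\prec$ satisfies $(U_1)$ and $(U_2)$. Since $(U_1)$ is exactly $(P_1)$ and $(U_2)$ under $(U_1)$ says $O(v)^- = I(v)^+ + 1$ for every progressive vertex, these are the tools I expect to use repeatedly; note also that $(\widetilde{P_2})$ together with $(U_1)$ is precisely what makes $\prec$ a planar order, so half of this lemma is essentially recalling that $(P_3)$ is equivalent to planarity.

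\textbf{Direction $(\widetilde{P_2}) \Longrightarrow (P_3)$.} Suppose $I(v_1) \cap \overline{I(v_2)} \neq \emptyset$ with $v_1 \neq v_2$; I want $v_1 \rightarrow v_2$. By Lemma \ref{progressive}(1), $v_2$ is progressive, so $I(v_2)^+ \to O(v_2)^-$ and $O(v_2)^- = I(v_2)^+ + 1$. Pick $e \in I(v_1) \cap \overline{I(v_2)}$, so $I(v_2)^- \preceq e \preceq I(v_2)^+$. If $e = I(v_2)^+$ then $t(e) = v_2$, but also $t(e) = v_1$, contradicting $v_1\neq v_2$; hence $I(v_2)^- \preceq e \prec I(v_2)^+ \prec O(v_2)^-$. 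Then $I(v_2)^- \preceq e \prec O(v_2)^-$ with $t(I(v_2)^-) = v_2 = s(O(v_2)^-)$ (using that $v_2$ is progressive so $O(v_2)\ne\emptyset$), so $(\widetilde{P_2})$ (applied, if $e = I(v_2)^-$, trivially, else to the chain $I(v_2)^- \prec e \prec O(v_2)^-$) gives $I(v_2)^- \rightarrow e$ or $e \rightarrow O(v_2)^-$. The first alternative forces $I(v_2)^+ \rightarrow e$ hence $I(v_2)^+ \prec e$ by $(U_1)$, contradicting $e \prec I(v_2)^+$; so $e \rightarrow O(v_2)^-$, which gives a directed path from $v_1 = t(e)$ through $s(O(v_2)^-) = v_2$, i.e.\ $v_1 \rightarrow v_2$. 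This is exactly the argument of Lemma \ref{property of PPG}. The outgoing half ($O(v_1)\cap\overline{O(v_2)}\ne\emptyset \Rightarrow v_2\rightarrow v_1$) is dual, using $I(v_2)^+ + 1 = O(v_2)^-$ in the symmetric way.

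\textbf{Direction $(P_3) \Longrightarrow (\widetilde{P_2})$.} Suppose $e_1 \prec e_2 \prec e_3$ and $t(e_1) = s(e_3) =: v$; I want $e_1 \rightarrow e_2$ or $e_2 \rightarrow e_3$. Since $e_1 \in I(v)$ and $e_3 \in O(v)$, by $(U_2)$ we have $e_1 \preceq I(v)^+ < O(v)^- = I(v)^+ + 1 \preceq e_3$, and $e_2$ lies strictly between $e_1$ and $e_3$, so either $e_2 \preceq I(v)^+$ or $e_2 \succeq O(v)^-$. In the first case $e_2 \in \overline{I(v)}$ (as $I(v)^- \preceq e_1 \prec e_2 \preceq I(v)^+$), so setting $v_2 = t(e_2)$ we get $e_2 \in I(v_2) \cap \overline{I(v)}$; if $v_2 = v$ then $e_2 \in I(v)$ and $e_1 \to e_1$ trivially — more precisely $e_1, e_2 \in I(v)$ means $e_1 \to e_1 = \cdots$; cleaner: if $v_2 = v$ then since $e_2 \in I(v)$, the trivial path at $v$ gives $e_2 \to e_3$ (as $e_3\in O(v)$), done. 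If $v_2 \neq v$ then $(P_3)$ gives $v_2 \rightarrow v$, hence a directed path from $v_2 = t(e_2)$ to $s(e_3) = v$, i.e.\ $e_2 \rightarrow e_3$. The case $e_2 \succeq O(v)^-$ is dual: $e_2 \in \overline{O(v)}$, set $v_1 = s(e_2) \in O(v_1)\cap\overline{O(v)}$; if $v_1 = v$ then $e_1 \to e_2$ via $v$, else $(P_3)$ gives $v \rightarrow v_1$, so $e_1 \rightarrow e_2$.

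\textbf{Main obstacle.} The structure of the proof is routine once one notices that $(U_2)$ forces every element strictly between an incoming and an outgoing edge of $v$ to fall into the closed hull of one side or the other; the genuine care needed is in the boundary bookkeeping — handling the degenerate cases where a chosen intermediate edge equals $I(v)^+$ or $O(v)^-$, or where the second vertex coincides with $v$, and verifying that $v$ is progressive (so that $O(v)$ is nonempty and $(U_2)$ applies) before invoking $O(v)^- = I(v)^+ + 1$. I expect the $(\widetilde{P_2}) \Rightarrow (P_3)$ direction to be the slightly more delicate one, since it reuses the $(P_2)$-style case split that can produce the spurious alternative $I(v_2)^- \rightarrow e$, which must be ruled out via $(U_1)$.
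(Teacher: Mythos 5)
Your proof is correct and follows essentially the same route as the paper: the $(\widetilde{P_2})\Rightarrow(P_3)$ direction is the paper's argument (as in Lemma \ref{property of PPG}), applying $(\widetilde{P_2})$ to the chain $I(v_2)^-\prec e\prec O(v_2)^-$ and ruling out $I(v_2)^-\rightarrow e$ via $(U_1)$, and the converse uses $(U_2)$ to place $e_2$ in $\overline{I(v)}$ or $\overline{O(v)}$ and then invokes $(P_3)$. Your extra bookkeeping on the degenerate cases ($e=I(v_2)^+$, $t(e_2)=v$) only sharpens steps the paper leaves implicit.
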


\begin{proof}
$(\widetilde{P_2})\Longrightarrow (P_3)$.  We only prove the first part of $(P_3)$, the proof of the second part is similar.  First by Lemma \ref{progressive} $(1)$, $I(v_1)\cap \overline{I(v_2)}\neq\emptyset$ implies that $v_2$ is processive.

Now take $e\in I(v_1)\cap \overline{I(v_2)}$,  then $ I(v_2)^-\prec e\prec I(v_2)^+\prec O(v_2)^-$, where the last equality follows from $(U_2)$. Clearly, $t(I(v_2)^-)=v_2=s(O(v_2)^-)$, then by $(\widetilde{P_2})$ we have either $I(v_2)^-\rightarrow e$ or $e\rightarrow O(v_2)^-$.  If $I(v_2)^-\rightarrow e$, then $I(v_2)^+\rightarrow e$, which contradicts $e\prec I(v_2)^+$ and $(U_1)$. So we must have $e\rightarrow O(v_2)^-$, which implies that $v_1\rightarrow v_2$.

$(P_3)\Longrightarrow (\widetilde{P_2})$.  Assume $e_1\prec e_2\prec e_3$ and $v=t(e_1)=s(e_3)$. By $(U_2)$, $\overline{E(v)}=\overline{I(v)}\sqcup \overline{O(v)}$.  Then $e_2\in [e_1,e_3]\subseteq \overline{E(v)}$ implies that  either $e_2\in \overline{I(v)}$ or $e_2\in \overline{O(v)}$. If $e_2\in \overline{I(v)}$, then $e_2\in I(t(e_2))\cap \overline{I(v)}\neq\emptyset$. So by $(P_3)$, we have $t(e_2)\rightarrow v$, hence $e_2\rightarrow e_3$. Similarly, $e_2\in \overline{O(v)}$ implies that $e_1\rightarrow e_2$.
\end{proof}

Now we give several characterizations of POP-graphs.

\begin{thm}\label{PP and AUP equivalence}
Let $G$ be a processive graph with a linear order $\prec$ on $E(G)$ satisfying $(P_1)$. Then the following statements are equivalent:

$(1)$ $(G,\prec)$ is a POP-graph.

$(2)$ $\prec$ satisfies $(U_2)$ and $(P_3)$.

$(3)$ $(G,\prec)$ is an anchored \UPO graph.

$(4)$ $\prec$ satisfies $(U_2)$, $(U_3)$ and $(U_4)$.
\end{thm}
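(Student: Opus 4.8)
The plan is to prove the four statements equivalent by establishing a cycle of implications, leaning heavily on the preparatory lemmas already in hand. The natural route is $(1)\Rightarrow(3)\Rightarrow(4)\Rightarrow(2)\Rightarrow(1)$, with a few shortcuts where the lemmas make a direct bidirectional passage cheap. Recall that $(P_1)=(U_1)$ is assumed throughout, and that for a progressive graph with $(U_1)$, condition $(U_2)$ is just $O(v)^-=I(v)^++1$ for every progressive vertex $v$; I will use this reformulation freely.

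\textbf{Step 1: $(1)\Rightarrow(3)$ and the core of $(1)\Leftrightarrow(2)$.}
First I would show that a POP-graph satisfies $(U_2)$, $(U_3)$, $(P_3)$ and $(A)$ all at once. Condition $(A)$ is exactly Lemma~\ref{property of PPG}. For $(U_2)$: since $\prec$ is a planar order, $(P_1)$ gives $I(v)^+\prec O(v)^-$ for any progressive $v$; if some edge $f$ satisfied $I(v)^+\prec f\prec O(v)^-$, then applying $(\widetilde{P_2})$ to $I(v)^+\prec f\prec O(v)^-$ (legitimate since $t(I(v)^+)=v=s(O(v)^-)$) forces $I(v)^+\rightarrow f$ or $f\rightarrow O(v)^-$; the first contradicts $f\prec I(v)^+$ being false — more precisely it would force $I(v)^+\prec f$ wrongly — so one gets a contradiction either way, establishing $O(v)^-=I(v)^++1$, i.e. $(U_2)$ (one should also rule out edges below $I(v)^-$ or above $O(v)^+$ interfering, but convexity makes $\overline{E(v)}$ an interval automatically). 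Given $(U_1)$ and $(U_2)$, Lemma~\ref{P_2 tilde and P_3} says $(\widetilde{P_2})\Leftrightarrow(P_3)$, and since $(\widetilde{P_2})$ is equivalent to $(P_2)$ by the remark in Section~2, we get $(1)\Rightarrow(2)$. For $(U_3)$: note $(P_3)$ trivially implies the two inclusions demanded by $(U_3)$ (if $I(v_1)\cap\overline{I(v_2)}\neq\emptyset$ then $v_1\rightarrow v_2$, whence $\overline{I(v_1)}\subseteq\overline{I(v_2)}$ because every edge into $v_1$ has a directed path to every edge into $v_2$ through the path $v_1\rightarrow v_2$; one must check this inclusion at the level of convex hulls, using $(U_1)$). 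So from $(2)$ we extract $(U_3)$, hence $(G,\prec)$ is a \UPO\ graph, and with $(A)$ in hand (which we still need to derive from $(2)$ — but $(P_3)$ immediately implies $(A)$, since $\overline{I(v_1)}\subset\overline{I(v_2)}$ with $\overline{I(v_1)}\neq\emptyset$ gives $I(v_1)\cap\overline{I(v_2)}\neq\emptyset$, hence $v_1\rightarrow v_2$ by $(P_3)$) we obtain $(2)\Rightarrow(3)$ as well.

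\textbf{Step 2: $(3)\Leftrightarrow(4)$ and closing the loop.}
An anchored \UPO\ graph satisfies $(U_1),(U_2),(U_3)$ by definition and $(A)$ by definition; Lemma~\ref{A and U_4} (which requires $G$ progressive and $(G,\prec)$ a \UPO\ graph — both available) gives $(A)\Leftrightarrow(U_4)$, so $(3)\Leftrightarrow(4)$ is immediate. It remains to close the cycle, e.g. $(2)\Rightarrow(1)$ or $(4)\Rightarrow(1)$. Take $(2)$: $\prec$ satisfies $(P_1)$ by hypothesis; $(U_2)$ plus $(P_3)$ gives, via Lemma~\ref{P_2 tilde and P_3}, condition $(\widetilde{P_2})$, hence $(P_2)$; so $\prec$ is a planar order and $(G,\prec)$ is a POP-graph. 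This yields $(2)\Rightarrow(1)$, completing a cycle $(1)\Rightarrow(2)\Rightarrow(3)\Leftrightarrow(4)$ together with $(2)\Rightarrow(1)$, which suffices for full equivalence.

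\textbf{Anticipated main obstacle.}
The routine lemma-invocations ($(A)\Leftrightarrow(U_4)$ and $(\widetilde{P_2})\Leftrightarrow(P_3)$) are already done for us, so the real work is the hands-on argument that a planar order forces $(U_2)$, namely that no edge can be wedged strictly between $I(v)^+$ and $O(v)^-$ and that $\overline{E(v)}=\overline{I(v)}\sqcup\overline{O(v)}$ genuinely splits as a disjoint union of intervals. The subtlety is bookkeeping with convex hulls: I must be careful that $(P_2)/(\widetilde{P_2})$ is applied only to triples $e_1\prec e_2\prec e_3$ with an actual directed relation $e_1\rightarrow e_3$ (or $t(e_1)=s(e_3)$), and that the contradiction derived ("$I(v)^+\rightarrow f$ together with $f\prec I(v)^+$") really is a contradiction with $(P_1)$. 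I expect this to be the one spot needing genuine care; everything else is assembling the cited lemmas in the right order.
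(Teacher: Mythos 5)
Your overall architecture (reformulating $(U_2)$ as $O(v)^-=I(v)^++1$, invoking Lemmas \ref{property of PPG}, \ref{P_2 tilde and P_3} and \ref{A and U_4}) is the same as the paper's, but the set of implications you actually establish does not close into an equivalence. You prove $(1)\Rightarrow(2)$, $(2)\Rightarrow(1)$, $(2)\Rightarrow(3)$ and $(3)\Leftrightarrow(4)$; from these there is no way to pass from $(3)$ or $(4)$ back to $(1)$ or $(2)$, so the claim that this "suffices for full equivalence" is wrong: you never show that an anchored UPO-graph is a POP-graph. The paper supplies exactly this missing arrow as $(3)\Rightarrow(2)$, via the observation that $(U_3)+(A)\Rightarrow(P_3)$: if $I(v_1)\cap\overline{I(v_2)}\neq\emptyset$ with $v_1\neq v_2$, then $(U_3)$ gives $\overline{I(v_1)}\subseteq\overline{I(v_2)}$, the inclusion is proper (distinct vertices cannot share the edge $I(v)^+$), and then $(A)$ yields $v_1\rightarrow v_2$. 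Some step of this kind is indispensable.

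Two of your sub-arguments are also defective. First, in extracting $(U_2)$ from the planar order: if $I(v)^+\prec f\prec O(v)^-$ and $(\widetilde{P_2})$ gives $I(v)^+\rightarrow f$, this does \emph{not} contradict $(P_1)$ — $(P_1)$ then yields $I(v)^+\prec f$, which is precisely your standing assumption, so the contradiction you cite ("$I(v)^+\rightarrow f$ together with $f\prec I(v)^+$") rests on a misstatement. The genuine argument, as in the paper, is that any directed path from $I(v)^+$ to $f$ passes through $v$, hence through some $e'\in O(v)$ with $e'\preceq f\prec O(v)^-$, contradicting the minimality of $O(v)^-$; symmetrically, $f\rightarrow O(v)^-$ forces some $e''\in I(v)$ with $I(v)^+\prec f\preceq e''$, contradicting the maximality of $I(v)^+$ — a branch you leave entirely unaddressed. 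Second, your justification of $(P_3)\Rightarrow(U_3)$ ("every edge into $v_1$ has a directed path to every edge into $v_2$") is false — a path from $v_1$ reaches $v_2$ only through its last edge — and even the corrected version only bounds $I(v_1)^+$ above by $I(v_2)^+$; it gives no control over edges of $I(v_1)$ lying below $I(v_2)^-$, which is where the inclusion of hulls could fail. The paper's proof handles this by contradiction: if $\overline{I(v_1)}\not\subseteq\overline{I(v_2)}$ while the hulls meet, then an endpoint of $\overline{I(v_2)}$ lies in $\overline{I(v_1)}$, so $(P_3)$ also gives $v_2\rightarrow v_1$, contradicting acyclicity. With these three repairs your proposal would coincide with the paper's proof.
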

\begin{proof}

$(1)\Longleftrightarrow (2)$.  By Lemma \ref{P_2 tilde and P_3} and the fact that $(P_2)\Longleftrightarrow (\widetilde{P_2})$, we see that $(P_2)\Longleftrightarrow (P_3)$ under $(U_1)$  and $(U_2)$. Since $\prec$ satisfies $(P_1)=(U_1$), then to prove $(1)\Longleftrightarrow (2)$ we only need to prove $(P_1)+(P_2)\Longrightarrow (U_2)$.

In fact, let $v$ be a processive vertex of $G$, $e_1=I(v)^+$ and $e_2=O(v)^-$. Clearly, $e_1\rightarrow e_2$, and by $(P_1)$, $e_1\prec e_2$. Now we prove $e_2=e_1+1$ by contradiction. Suppose there exists an edge $e$ with $e_1\prec e\prec e_2$, then by $(P_2)$ we have either $e_1\rightarrow e$ or $e\rightarrow e_2$. If $e_1\rightarrow e$, then there must exist an edge $e'\in O(v)$ such that $e'\rightarrow e$ or $e'=e$, which follows $e'\preceq e$ by $(P_1)$. Hence $e'\prec e_2$, which contradicts the facts that $e'\in O(v)$ and $e_2=O(v)^-$. Similarly, $e\rightarrow e_2$ also leads a contradiction.

$(1)\Longrightarrow (3)$. We have proved $(P_1)+(P_2)\Longrightarrow (U_2)$ and  Lemma  \ref{property of PPG}  shows that $(P_1)+(P_2)\Longrightarrow (A)$, thus to prove $(1)\Longrightarrow (3)$ we only need to prove $(P_1)+(P_2)\Longrightarrow (U_3)$.  By $(1)\Longleftrightarrow (2)$, it suffices to show  $(P_3)\Longrightarrow (U_3)$.

We prove this by contradiction. Suppose $I(v_1)\cap \overline{I(v_2)}\neq \emptyset$ and $\overline{I(v_1)}\not\subseteq \overline{I(v_2)}$. On one hand, by $(P_3)$, $I(v_1)\cap \overline{I(v_2)}\neq \emptyset$ implies $v_1\rightarrow v_2$.
On the other hand, $I(v_1)\cap \overline{I(v_2)}\neq \emptyset$ implies that $\overline{I(v_1)}\cap \overline{I(v_2)}\neq \emptyset$.
Assume $\overline{I(v_1)}=[e_1, e_2]$ and $\overline{I(v_2)}=[h_1, h_2]$.  Since $\overline{I(v_1)}\not\subseteq \overline{I(v_2)}$, so we have either $e_1\prec h_1\prec e_2$ or $e_1\prec h_2\prec e_2$. Both cases imply $I(v_2)\cap \overline{I(v_1)}\neq \emptyset$, then  by $(P_3)$, we have $v_2\rightarrow v_1$, a contradiction with the acyclicity of $G$.
The second part of $(U_3)$ can be proved similarly.

$(3)\Longrightarrow (2)$.  This is a direct consequence of the fact that $(U_3)+(A)\Longrightarrow (P_3)$.

$(3)\Longleftrightarrow(4)$. This is a direct consequence of Lemma \ref{A and U_4}.

\end{proof}

\section{CPP-extension}
In this section, we introduce the notion of a CPP extension for a directed graph, and show that CPP extensions are naturally in bijective with upward planar orders.
\begin{defn}\label{cpp}
A \emph{canonical processive planar extension},  or \emph{\CPP extension},  of a directed graph $G$ is a POP-graph $(\Gamma, \prec)$ together with an embedding $\phi\colon G\rightarrow \Gamma$, such that

$(E_1)$ $\phi_0(V(G))= V(\Gamma)-(S(\Gamma)\sqcup T(\Gamma))$;

$(E_2)$ $|E(\Gamma)|=|E(G)|+|S(G)|+|T(G)|$, where $|X|$ denotes the cardinality of $X$;

$(E_3)$ $I(\phi_0(v))\cap O(\phi_0(w))= \emptyset$ for any $v\in S(G)$ and $w\in T(G)$;

$(E_4)$  $e\in E(\Gamma)-(\phi_1(E(G))\cup I(\Gamma)\cup O(\Gamma))$ implies that either $O(s(e))^-\prec e \prec O(s(e))^+$ or $I(t(e))^-\prec e \prec I(t(e))^+$.
\end{defn}
Clearly, $G$ must be acyclic if it has a CPP extension. Any \CPP extension of $G$ is obtained from $G$ by adding some new vertices and edges, with local configurations as listed in Fig. $11$. Since $\Gamma$ is processive, $(E_1)$ says that the vertices of $G$  exactly correspond to the processive vertices of $\Gamma$.   $(E_2)$ says that the number of newly added edges are exactly the number $|S(G)|+|T(G)|$ of sources and sinks, and $(E_3)$ says that any newly added edge should not connect a source and a sink of $G$. $(E_4)$ says that if a newly added edge is neither an input nor output edge of $\Gamma$, then its local configuration should be the case $(2)$ or $(4)$ in Fig. $11$.

\begin{rem} In general, \CPP extensions may not exist; and it may not be unique even if it exists. While for a \UPO graph, there exists a unique compatible \CPP extension; see Theorem \ref{P1} below.
\end{rem}

 The following lemma characterizes the newly added edges.

\begin{lem}\label{CPP-1}
Let $G$ be an acyclic directed graph and $\phi\colon G\rightarrow (\Gamma,\prec)$ a \CPP extension. Then
$$\bigsqcup_{v\in S(G)}I(\phi_0(v)) \bigsqcup\bigsqcup_{w\in T(G)}O(\phi_0(w))=E(\Gamma)-\phi_1(E(G)),$$ and  $|I(\phi_0(v))|=1$ for any $v\in S(G)$, and $|O(\phi_0(w))|=1$ for any $w\in T(G)$.
\end{lem}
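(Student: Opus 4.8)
The plan is to prove the two claimed facts (the disjoint-union identity and the cardinality statements) in tandem, using only the defining properties $(E_1)$–$(E_4)$ of a \CPP extension together with the structural results already available, in particular Lemma~\ref{progressive} and Theorem~\ref{PP and AUP equivalence}. First I would unwind what $(E_1)$ says: the vertices of $G$ are exactly the progressive vertices of $\Gamma$, so $S(\Gamma)\sqcup T(\Gamma)$ consists precisely of the newly added vertices, together with those $\phi_0(v)$ that become... no — by $(E_1)$, $\phi_0(v)$ is always progressive in $\Gamma$, so in fact the source/sink vertices of $\Gamma$ are all new. Consequently every input edge of $\Gamma$ ends at some vertex, and that vertex is either a new vertex or some $\phi_0(v)$; tracing through the local configurations in Fig.~$5$ (which, as the paper notes, exhaust all ways $\Gamma$ can arise from $G$), an input edge of $\Gamma$ that is incident to a vertex of $G$ must end at $\phi_0(v)$ for $v$ a source of $G$, and dually for output edges.

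The key bookkeeping step is to show that an edge of $\Gamma$ lies in $\phi_1(E(G))$ if and only if it is neither an input nor an output edge of $\Gamma$ that is incident to the image of $G$, and more precisely to identify $E(\Gamma)-\phi_1(E(G))$ with the union $\bigsqcup_{v\in S(G)} I(\phi_0(v)) \sqcup \bigsqcup_{w\in T(G)} O(\phi_0(w))$. For the inclusion $\supseteq$: if $v\in S(G)$, then $v$ has no incoming edge in $G$, so no edge of $\phi_1(E(G))$ can enter $\phi_0(v)$; hence $I(\phi_0(v))\subseteq E(\Gamma)-\phi_1(E(G))$, and dually for $w\in T(G)$. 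Disjointness of the two families is exactly $(E_3)$: $I(\phi_0(v))\cap O(\phi_0(w))=\emptyset$ for $v\in S(G)$, $w\in T(G)$; and disjointness within each family is automatic since incoming edge sets of distinct vertices are disjoint. For the inclusion $\subseteq$: let $e\in E(\Gamma)-\phi_1(E(G))$ be a newly added edge. If $e$ is neither an input nor an output edge of $\Gamma$, then $(E_4)$ forces $O(s(e))^-\prec e\prec O(s(e))^+$ or $I(t(e))^-\prec e\prec I(t(e))^+$; I would argue this is impossible, because in the first case $s(e)$ has at least two outgoing edges $O(s(e))^-$ and $O(s(e))^+$ flanking $e$, and then $e\in O(s(e))$ is one of them, contradiction — so actually I need to be more careful and instead use $(E_4)$ to locate $e$ as a middle outgoing (resp. incoming) edge of a genuine vertex and derive that such an $e$ would have to already be an edge forced by the POP structure in $\phi_1(E(G))$, using that the only new vertices/edges are those of Fig.~$5$ types $(1)$–$(4)$. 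The cleaner route: since $\Gamma$ is obtained from $G$ by iterated additions of the four types, a newly added edge $e$ is always either an input edge of $\Gamma$ ending at a source of $G$ (type $(2)$, or type $(1)$ creating a new source), an output edge starting at a sink of $G$ (type $(4)$, or type $(3)$), and in the type $(1)$/$(3)$ cases it is an input/output edge of $\Gamma$ at a new vertex; but $(E_1)$ rules out the type $(1)$/$(3)$ additions that would create new progressive vertices, so every newly added edge is incident to the image of $G$ at a source or a sink of $G$.

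Once the set identity is established, the cardinality claims follow by a counting argument. By $(E_2)$, $|E(\Gamma)-\phi_1(E(G))| = |S(G)|+|T(G)|$. On the other hand the set identity exhibits $E(\Gamma)-\phi_1(E(G))$ as a disjoint union of the $|S(G)|+|T(G)|$ sets $I(\phi_0(v))$ ($v\in S(G)$) and $O(\phi_0(w))$ ($w\in T(G)$), each of which is nonempty — $I(\phi_0(v))\ne\emptyset$ because $\phi_0(v)$ is progressive in $\Gamma$ by $(E_1)$, hence not a source, so it has an incoming edge, which must be a new one. Therefore a disjoint union of $|S(G)|+|T(G)|$ nonempty sets has total cardinality $|S(G)|+|T(G)|$, which forces each set to be a singleton: $|I(\phi_0(v))|=1$ for all $v\in S(G)$ and $|O(\phi_0(w))|=1$ for all $w\in T(G)$.

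I expect the main obstacle to be the inclusion $E(\Gamma)-\phi_1(E(G))\subseteq \bigsqcup_v I(\phi_0(v))\sqcup\bigsqcup_w O(\phi_0(w))$, i.e.\ showing that a newly added edge cannot be an "interior" edge as permitted by the wording of $(E_4)$. The resolution is that $(E_4)$ is a constraint that, combined with $(E_1)$ (no new progressive vertices) and the fact that $\Gamma$ is progressive with all sources/sinks of degree one, leaves no room for such an edge: an interior outgoing edge $e$ with $O(s(e))^-\prec e\prec O(s(e))^+$ has $s(e)$ progressive, hence $s(e)=\phi_0(u)$ for some $u\in V(G)$, and $e\in O(s(e))$; but then either $e\in\phi_1(E(G))$ (if $e$ corresponds to an edge of $G$ out of $u$), or $e$ is new — and a new edge out of a vertex $\phi_0(u)$ with $u$ progressive in $G$ contradicts the enumeration of admissible additions in Fig.~$5$, all of which attach new edges only at sources or sinks of $G$. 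Spelling out this last appeal to Fig.~$5$ rigorously (rather than by picture) is the delicate point; I would phrase it as: every new edge $e$ satisfies $s(e)\in T(G)$ or $t(e)\in S(G)$ by construction, which is precisely what is needed.
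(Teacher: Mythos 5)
Your inclusion $\bigsqcup_{v\in S(G)}I(\phi_0(v))\,\sqcup\,\bigsqcup_{w\in T(G)}O(\phi_0(w))\subseteq E(\Gamma)-\phi_1(E(G))$, the pairwise disjointness (automatic within each family, and across the two families by $(E_3)$), and the nonemptiness of each $I(\phi_0(v))$ and $O(\phi_0(w))$ via $(E_1)$ are all correct and agree with the paper. The genuine gap is in your reverse inclusion. You ultimately justify it by the claim that ``every new edge $e$ satisfies $s(e)\in T(G)$ or $t(e)\in S(G)$ by construction,'' appealing to the enumeration of admissible additions in Fig.~$5$. But a CPP-extension is defined axiomatically by $(E_1)$--$(E_4)$, not by a construction; the paper's sentence that every CPP-extension is obtained by Fig.~$5$-type additions is an informal gloss whose rigorous content is essentially this lemma (together with Lemma \ref{Claim} and Proposition \ref{P1}), so invoking it here is circular. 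Your direct attempt via $(E_4)$ also does not work as stated: $(E_4)$ does not make ``interior'' new edges impossible --- the relation $O(s(e))^-\prec e\prec O(s(e))^+$ merely says $e$ is neither the extreme outgoing edge of $s(e)$, so there is no contradiction, and indeed interior new edges do occur (configurations $(2)$ and $(4)$); what has to be shown is that any new edge nevertheless lies in some $I(\phi_0(v))$ with $v\in S(G)$ or $O(\phi_0(w))$ with $w\in T(G)$, and the axioms give you no edge-by-edge argument for that.

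The repair is a small reordering of what you already have, and it is exactly the paper's proof: do not try to establish the set identity first. From the $\supseteq$ inclusion, the left-hand side is a disjoint union of $|S(G)|+|T(G)|$ nonempty subsets of $E(\Gamma)-\phi_1(E(G))$, while by $(E_2)$ the latter has exactly $|S(G)|+|T(G)|$ elements; being a disjoint union of that many nonempty sets inside a set of that cardinality forces, simultaneously, that each constituent is a singleton and that the union exhausts $E(\Gamma)-\phi_1(E(G))$. In particular $(E_4)$ is not needed in this lemma at all (the paper uses it only later, in Lemma \ref{Claim}). Your final paragraph contains precisely this counting argument, but as written you make it conditional on the set identity, which you had not legitimately established; run the count first and the identity falls out.
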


\begin{proof}
 Assume $S(G)=\{v_1,\cdots,v_m\}$ and $T(G)=\{w_1,\cdots, w_n\}$. By $(E_1)$, $\phi_0(v_k)$ $(1\leq k\leq m)$ and $\phi_0(w_l)$ $(1\leq l\leq n)$ are all processive vertices of $\Gamma$.  Thus $I(\phi_0(v_k))$ $(1\leq k\leq m)$ and $O(\phi_0(w_l))$ $(1\leq l\leq n)$ are not empty. Clearly, $$\bigcup_{1\leq k\leq m}I(\phi_0(v_k)) \bigcup \bigcup_{1\leq l\leq n}O(\phi_0(w_l))\subseteq E(\Gamma)-\phi_1(E(G)),$$ and $I(\phi_0(v_k))\cap I(\phi_0(v_l))= \emptyset$ for any  $1\leq k< l\leq m$ and $O(\phi_0(w_k))\cap O(\phi_0(w_l))= \emptyset$  for any $1\leq k<l\leq n$.  Then  by $(E_2)$ the cardinal number of $E(\Gamma)-\phi_1(E(G))$ is  $m+n$ and by $(E_3)$ the cardinal number of $\bigcup_{1\leq k\leq m}I(\phi_0(v_k)) \bigcup \bigcup_{1\leq l\leq n}O(\phi_0(w_l))$ is also  $m+n$. So we have  $\bigsqcup_{1\leq k\leq m}I(\phi_0(v_k)) \bigsqcup \bigsqcup_{1\leq l\leq n}O(\phi_0(w_l))=E(\Gamma)-\phi_1(E(G))$ and  $|I(\phi_0(v_k))|=1$ $(1\leq k\leq m)$, $|O(\phi_0(w_l))|=1$ $(1\leq l\leq n)$.
\end{proof}

Let $\prec_G$ be the linear order on $E(G)$ induced from $\prec$.
The following lemma is a direct consequence of $(E_4)$, which says that some properties of an edge of $G$ with respect to $\prec_G$ are preserved by the embedding $\phi: G\rightarrow \Gamma$.
\begin{lem}\label{Claim}
Let $G$ be an acyclic directed graph and $\phi \colon G\rightarrow (\Gamma,\prec)$ a \CPP extension of $G$.
For any edge  $e$ of $G$, we set $e'=\phi_1(e)$. Then we have:

$(1)$ $e=I(t(e))^-$ in $(G,\prec_G)$ $\Longleftrightarrow$  $e'=I(t(e'))^-$ in $(\Gamma, \prec)$;

$(2)$ $e=I(t(e))^+$ in $(G,\prec_G)$ $\Longleftrightarrow$  $e'=I(t(e'))^+$ in $(\Gamma, \prec)$;

$(3)$ $e=O(s(e))^-$ in $(G,\prec_G)$ $\Longleftrightarrow$  $e'=O(s(e'))^-$ in $(\Gamma, \prec)$;

$(4)$ $e=O(s(e))^+$ in $(G,\prec_G)$ $\Longleftrightarrow$  $e'=O(s(e'))^+$ in $(\Gamma, \prec)$.

\end{lem}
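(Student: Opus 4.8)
The plan is to prove each of the four biconditionals by exploiting the fact, from Lemma \ref{CPP-1}, that $\phi$ adds to $G$ exactly $|S(G)|+|T(G)|$ new edges, each being either the unique input edge of (the image of) a source of $G$ or the unique output edge of (the image of) a sink of $G$. The key structural input is $(E_4)$: a newly added edge that is not an input or output edge of $\overline{G}$ must sit strictly between the two extreme outgoing edges of its source, or strictly between the two extreme incoming edges of its target. The four statements are pairwise dual (reversing all arrows swaps $I\leftrightarrow O$, $s\leftrightarrow t$, sources $\leftrightarrow$ sinks), so it suffices to prove, say, $(1)$ and $(2)$, and the proofs of $(1)$ and $(2)$ themselves are parallel (swapping $\min\leftrightarrow\max$); so the real content is a single argument.

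First I would fix an edge $e$ of $G$ and write $v=t(e)$, $v'=t(e')=\phi_0(v)$. The crucial observation is that the set of incoming edges of $v'$ in $\overline{G}$ either equals $\phi_1(I(v))$, or is $\phi_1(I(v))\sqcup\{f\}$ for a single new edge $f$ with $t(f)=v'$; the latter happens precisely when $v$ is a sink of $G$, in which case $|I(v)|=1$ by Lemma \ref{CPP-1} applied dually. In the ``equal'' case, since $\phi_1$ preserves the order ($\prec_G$ is by definition the restriction of $\overline\prec$), we have $e=I(v)^-$ in $(G,\prec_G)$ iff $e'=I(v')^-$ in $(\overline G,\overline\prec)$, and likewise for the maximum; this disposes of both directions at once whenever $v$ is not a sink of $G$.

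The remaining case is $v$ a sink of $G$: then $I(v)=\{e\}$, so trivially $e=I(v)^-=I(v)^+$ in $(G,\prec_G)$; I must show $e'=I(v')^-$ and $e'=I(v')^+$ in $(\overline G,\overline\prec)$, i.e. that the possible extra new edge $f\in I(v')$ cannot exist, or rather that $I(v')=\{e'\}$. Here is where $(E_4)$ enters: such an $f$ is a new edge with $t(f)=v'$; if $f$ were an input edge of $\overline G$ then $s(f)$ would be a source of $\overline G$, but $v'=\phi_0(v)$ is a progressive vertex of $\overline G$ by $(E_1)$ while $v$ being a sink of $G$ means... — actually the cleanest route is: by Lemma \ref{CPP-1} the new edges incident to $v'$ on the input side would force $v\in S(G)$, contradicting $v\in T(G)$ together with $(E_3)$-style disjointness, so no such $f$ exists and $I(v')=\phi_1(I(v))=\{e'\}$. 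I expect the main obstacle to be bookkeeping the case analysis cleanly: precisely pinning down, via Lemma \ref{CPP-1} and $(E_1)$–$(E_4)$, that the only way $I(v')\ne\phi_1(I(v))$ is when $v$ is a sink of $G$, and in that subcase showing $I(v')$ gains nothing — once that dichotomy is established, monotonicity of $\phi_1$ finishes all four statements immediately.
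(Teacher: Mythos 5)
Your central dichotomy is wrong, and it hides exactly the case the lemma is about. You claim that $I_{\overline G}(v')$ can differ from $\phi_1(I_G(v))$ only when $v=t(e)$ is a sink of $G$. But by Lemma \ref{CPP-1} the new edges are the unique new incoming edge at each source-image and the unique new outgoing edge at each sink-image; an edge of the latter kind has its \emph{target} at the image of an arbitrary vertex of $G$ with nonempty in-set (this is precisely configuration $(4)$ of Fig $5$, which $(E_4)$ is designed to govern). So the image of a progressive vertex of $G$ may well acquire an extra incoming edge in $\overline G$, and your ``equal'' case, which you claim settles every $v\notin T(G)$ by order-preservation alone, does not cover this situation at all. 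The subsidiary claims in your sink case are also false: $v\in T(G)$ does not force $|I_G(v)|=1$ (Lemma \ref{CPP-1} says $|O(\phi_0(w))|=1$ for $w\in T(G)$, it says nothing about $I_G(w)$), and $(E_3)$ only forbids new edges from sink-images to source-images, so it does not rule out a new incoming edge at a sink-image either.

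What the proof actually needs --- and where $(E_4)$ enters, which you announce as the key input but never actually use --- is this: let $f$ be any new edge with $t(f)=t(e')$. Since $t(e')$ is not a source-image, Lemma \ref{CPP-1} forces $s(f)=\phi_0(w)$ for some $w\in T(G)$, and then $O_{\overline G}(s(f))=\{f\}$; moreover $f\notin I(\overline G)\cup O(\overline G)$ because both endpoints of $f$ are images of vertices of $G$, hence progressive in $\overline G$ by $(E_1)$. So in $(E_4)$ the alternative $O(s(f))^-\mathbin{\overline\prec} f\mathbin{\overline\prec} O(s(f))^+$ is impossible, and we conclude $I(t(f))^-\mathbin{\overline\prec} f\mathbin{\overline\prec} I(t(f))^+$: every new incoming edge at $t(e')$ is non-extremal in $I_{\overline G}(t(e'))$. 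Hence the minimum and maximum of $I_{\overline G}(t(e'))$ lie in $\phi_1(I_G(t(e)))$, and order-preservation of $\phi_1$ yields $(1)$ and $(2)$; $(3)$ and $(4)$ follow dually. This is the ``direct consequence of $(E_4)$'' the paper alludes to (it prints no detailed proof); your route, as written, fails on configuration $(4)$ of Fig $5$.
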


The following result shows that for a UPO-graph, there is a \CPP extension uniquely determined by its upward planar order.
\begin{thm}\label{P1}
Let $(G,\prec)$ be  a  \UPO graph. Then there exists a unique \CPP extension $\phi: G\rightarrow (\overline{G},\overline{\prec})$ such that for any $e_1, e_2\in E(G)$, $e_1\prec e_2$ implies that $\phi_1(e_1)\overline{\prec} \phi_1(e_2)$.
\end{thm}
\begin{proof}
We construct a POP-graph $(\overline{G},\overline{\prec})$ by adding vertices and edges to $(G,\prec)$ just in the ways of Theorem \ref{L_1_ext}.

$(1)$ For each $v\in S(G)$, if $U(v)=\emptyset$, we add to $G$ a source $v^-$ and an input edge $e_v$ with $s(e_v)=v^-$, $t(e_v)=v$. Otherwise, we add to $G$ an edge $e_v$ with $s(e_v)=U(v)^-$, $t(e_v)=v$. For both cases, we set the order $e_v=O(v)^--1$.

$(2)$ For each $v\in T(G)$, if $D(v)=\emptyset$, we add to $G$ a sink $v^+$ and an output edge $e_v$ with $s(e_v)=v$, $t(e_v)=v^+$. Otherwise, we add to $G$ an edge $e_v$ with $s(e_v)=v$, $t(e_v)=D(v)^-$. For both cases, we set the order $e_v=I(v)^++1$.

Clearly, the order of adding edges are unimportant and  the above construction produces a unique  processive  graph $\overline{G}$, a unique linear order $\overline{\prec}$ on $E(\overline{G})$ and a unique embedding $\phi:G\rightarrow \overline{G}$ which preserves the orders on edges and satisfies $(E_1), (E_2), (E_3), (E_4)$.
Iteratively applying Theorem \ref{L_1_ext}, we see that $(\overline{G},\overline{\prec})$ is a \UPO graph.

To show that $(\overline{G},\overline{\prec})$ is a POP-graph, by Theorem \ref{PP and AUP equivalence}, it suffices to show that $(\overline{G},\overline{\prec})$ satisfies $(U_4)$. We prove this by contradiction. Suppose there exists a  processive vertex $v$ of $\overline{G}$ with $I(\overline{G})\cap \overline{O(v)}\neq\emptyset$. Clearly, $v\in V(G)$. Take an edge $e\in I(\overline{G})\cap \overline{O(v)}$ and set $w=t(e)$. By the construction of $(\overline{G},\overline{\prec})$, $e\in I(\overline{G})$ implies that $w\in S(G)$, $U(w)=\emptyset$ in $(G,\prec)$, $e=e_{w}$, and $O(w)^-=e+1$. Then $e\in \overline{O(v)}$ implies that $O(v)^-\overline{\prec} O(w)^-\overline{\preceq}O(v)^+$. If $O(v)^+= O(w)^-$, then $v=w$, and hence $e\in \overline{O(v)}\cap I(v)$ in $(\overline{G},\overline{\prec})$, which contradicts the fact that $\overline{\prec}$ satisfies $(U_2)$. So we must have $O(v)^-\overline{\prec} O(w)^-\overline{\prec}O(v)^+$, which means $O(w)\cap \overline{O(v)}\neq\emptyset$ in $(\overline{G},\overline{\prec})$. Then by $(U_3)$ for $\overline{\prec}$,  we have $\overline{O(w)}\subset \overline{O(v)}$ in $(\overline{G},\overline{\prec})$, which, by the construction of $(\overline{G},\overline{\prec})$, implies that $\overline{O(w)}\subset \overline{O(v)}$ in $(G,\prec)$, that is, $v\in U(w)\neq\emptyset$ in $(G,\prec)$, a contradiction.
Similarly, we can prove that for any  processive  vertex $v$ of $\overline{G}$, $O(\overline{G})\cap \overline{I(v)}=\emptyset$.

Now we show the uniqueness of the CPP-extension. Suppose $\varphi\colon G\rightarrow (G_1,\prec_1)$ is a \CPP extension of $G$ that preserves the upward planar orders. By Lemma \ref{CPP-1}, for any $e\in E(G_1)-\varphi_1(E(G))$, there exists a unique $v\in S(G)\sqcup T(G)$, such that $\{e\}=I(\varphi_0(v))$ or $\{e\}=O(\varphi_0(v))$. Since $(G_1,\prec_1)$ is a \UPO graph, then $e=O(t(e))^-+1$ or $e=I(s(e))^++1$. Comparing to the construction of $(\overline{G},\overline{\prec})$, it is not difficult to see that there exists a canonical order-preserving isomorphism $\lambda\colon G_1\rightarrow \overline{G}$ such that $\phi=\lambda\circ \varphi$. Thus all order-preserving \CPP extensions of $(G,\prec)$ are canonically isomorphic to each other.
\end{proof}

Conversely, a CPP extension always induces an upward planar order.
\begin{prop}\label{P2}
Any \CPP extension of an acyclic directed graph $G$ induces an upward planar order on $E(G)$.
\end{prop}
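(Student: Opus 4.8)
Write the CPP-extension as $\phi\colon G\to(\Gamma,\prec)$, and let $\prec_G$ be the linear order it induces on $E(G)$, so that (identifying $E(G)$ with $\phi_1(E(G))\subseteq E(\Gamma)$) the map $\phi_1$ is an order-preserving injection from $(E(G),\prec_G)$ into $(E(\Gamma),\prec)$. The plan is to verify the three axioms $(U_1)$, $(U_2)$, $(U_3)$ for $\prec_G$ directly, using that $(\Gamma,\prec)$ is a POP-graph and hence, by Theorem \ref{PP and AUP equivalence}, an anchored \UPO graph, so that $\prec$ itself satisfies $(U_1)$, $(U_2)$, $(U_3)$ (and $(U_4)$).

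\textbf{Steps.} First, since $\prec_G$ is the restriction of $\prec$ to the subgraph $G$ of $\Gamma$, Lemma \ref{restriction} gives $(U_1)$ and $(U_3)$ for $\prec_G$ at once, because these axioms are hereditary. So the only substantive point is $(U_2)$. For a source or a sink $v$ of $G$ one of $I_G(v)$, $O_G(v)$ is empty, and $(U_2)$ holds trivially since then $\overline{E_G(v)}=\overline{I_G(v)}\sqcup\overline{O_G(v)}$ and $\overline{I_G(v)}\cap\overline{O_G(v)}=\emptyset$ reduce to tautologies. For a progressive vertex $v$ of $G$, recall that under $(U_1)$ the condition $(U_2)$ at $v$ is equivalent to $O_G(v)^-=I_G(v)^++1$ in $\prec_G$, so it suffices to prove this equality. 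By $(E_1)$, $\phi_0(v)$ is a progressive vertex of $\Gamma$, so $I_\Gamma(\phi_0(v))$ and $O_\Gamma(\phi_0(v))$ are nonempty. Now apply Lemma \ref{Claim}$(2)$ to the edge $e=I_G(v)^+$ and Lemma \ref{Claim}$(3)$ to the edge $e=O_G(v)^-$: these give $\phi_1(I_G(v)^+)=I_\Gamma(\phi_0(v))^+$ and $\phi_1(O_G(v)^-)=O_\Gamma(\phi_0(v))^-$. Since $\prec$ satisfies $(U_2)$ and $\phi_0(v)$ is progressive, $O_\Gamma(\phi_0(v))^-=I_\Gamma(\phi_0(v))^++1$ in $\prec$, i.e. $\phi_1(O_G(v)^-)=\phi_1(I_G(v)^+)+1$ in $\prec$. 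Hence no edge of $\Gamma$ lies strictly $\prec$-between $\phi_1(I_G(v)^+)$ and $\phi_1(O_G(v)^-)$; in particular no edge of $\phi_1(E(G))$ does, and since $\phi_1$ is order-preserving and injective, no edge of $G$ lies strictly $\prec_G$-between $I_G(v)^+$ and $O_G(v)^-$. Therefore $O_G(v)^-=I_G(v)^++1$ in $\prec_G$, establishing $(U_2)$, and $\prec_G$ is an upward planar order.

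\textbf{Main obstacle.} The one place where the CPP-axioms $(E_1)$--$(E_4)$ are genuinely needed is the assertion that passing from $G$ to $\Gamma$ does not insert a newly-added edge incident to $\phi_0(v)$ between $\phi_1(I_G(v)^+)$ and $\phi_1(O_G(v)^-)$ in $\prec$ — equivalently, that $\phi$ preserves the properties ``is the largest incoming edge'' and ``is the smallest outgoing edge'' of a vertex. This is precisely the content of Lemma \ref{Claim}, which rests on $(E_4)$, so the delicate part has already been isolated and proved; the remainder of the argument is the bookkeeping above.
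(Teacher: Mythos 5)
Your proposal is correct and follows essentially the same route as the paper: Lemma \ref{restriction} gives $(U_1)$ and $(U_3)$ by heredity, and $(U_2)$ is reduced to the equality $O(v)^-=I(v)^++1$ at progressive vertices, which is deduced from Lemma \ref{Claim} together with $(U_2)$ for the POP-graph $(\Gamma,\prec)$. You merely spell out the bookkeeping (via Lemma \ref{Claim}$(2)$,$(3)$ and order-preservation of $\phi_1$) that the paper compresses into ``which follows from Lemma \ref{Claim}''.
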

\begin{proof}
Let $\phi\colon G\rightarrow (\Gamma,\prec)$ be a \CPP extension of $G$. By Proposition \ref{restriction}, the induced order $\prec_G$ is a linear order satisfying $(U_1)$ and $(U_3)$. To show that $\prec_G$ satisfies $(U_2)$, by $(U_1)$, it suffices to show that for any  processive  vertex $v$ of $G$, $I(v)^++1=O(v)^-$, which follows from Lemma \ref{Claim}.
\end{proof}

As a direct consequence of Theorem \ref{P1} and Proposition \ref{P2}, the following is our main result in this section.
\begin{thm}\label{T2}
For any acyclic directed graph $G$, there is a bijection between the set of upward planar orders and the set of \CPP extensions.
\end{thm}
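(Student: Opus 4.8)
The plan is to assemble Theorem \ref{T2} directly from the two propositions just established, so the main work is to verify that the two constructions --- ``take the \CPP extension of a \UPO graph'' (Proposition \ref{P1}) and ``restrict the order of a \CPP extension to $E(G)$'' (Proposition \ref{P2}) --- are mutually inverse. I would set up two maps: $\Phi$ sending an upward planar order $\prec$ on $E(G)$ to the order-preserving \CPP extension $\phi\colon G\to(\overline{G},\overline{\prec})$ furnished by Proposition \ref{P1}, and $\Psi$ sending a \CPP extension $\phi\colon G\to(\Gamma,\prec)$ to the induced order $\prec_G$ on $E(G)$ given by Proposition \ref{P2}. Strictly speaking $\Phi$ lands in isomorphism classes of \CPP extensions and $\Psi$ is defined on those classes, so I would first note that $\Psi$ is constant on isomorphism classes of \CPP extensions (an order-preserving isomorphism $\lambda\colon\Gamma_1\to\Gamma_2$ with $\phi_2=\lambda\circ\phi_1$ identifies the induced orders on $E(G)$), which makes the statement well posed.

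The first composite, $\Psi\circ\Phi$, is essentially immediate: given $\prec$, Proposition \ref{P1} builds $(\overline{G},\overline{\prec})$ together with an order-\emph{preserving} embedding $\phi$, so for $e_1,e_2\in E(G)$ we have $e_1\prec e_2\iff\phi_1(e_1)\,\overline{\prec}\,\phi_1(e_2)$; hence the order $\overline{\prec}$ restricts to exactly $\prec$ on $E(G)$, i.e. $\Psi(\Phi(\prec))=\prec$. The second composite, $\Phi\circ\Psi$, is where the real content sits: starting from an arbitrary \CPP extension $\phi\colon G\to(\Gamma,\prec)$, I form $\prec_G=\Psi(\phi)$, which is an upward planar order by Proposition \ref{P2}, and then apply Proposition \ref{P1} to get the canonical order-preserving \CPP extension $\overline{G}$ of $(G,\prec_G)$; I must show $(\Gamma,\prec)$ is isomorphic to $(\overline{G},\overline{\prec})$ as a \CPP extension. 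But $(\Gamma,\prec)$ is itself a \CPP extension of $G$ whose restricted order on $E(G)$ is $\prec_G$, and since $\Gamma$ is a POP-graph it is in particular a \UPO graph, so by Lemma \ref{CPP-1} each newly added edge $e\in E(\Gamma)-\phi_1(E(G))$ is the unique element of some $I(\phi_0(v))$ or $O(\phi_0(v))$ with $v\in S(G)\sqcup T(G)$, and $(U_2)$ forces $e=O(t(e))^--1$ or $e=I(s(e))^++1$ in $\prec$ --- the exact recipe used to build $\overline{G}$. Moreover the embedding $\phi\colon G\to\Gamma$ is order-preserving relative to $\prec_G=\prec|_{E(G)}$, so $\phi$ qualifies as an order-preserving \CPP extension of $(G,\prec_G)$; the uniqueness clause in Proposition \ref{P1} then yields a canonical order-preserving isomorphism $(\Gamma,\prec)\cong(\overline{G},\overline{\prec})$ commuting with the embeddings, which is precisely the statement that $\Phi(\Psi(\phi))$ and $\phi$ represent the same \CPP extension.

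The step I expect to be the main obstacle is bookkeeping the identification in $\Phi\circ\Psi$ cleanly --- in particular checking that the vertex/edge structure of $\Gamma$ outside $\phi_1(E(G))$ really matches the construction in Proposition \ref{P1} case by case (sources with $U(\cdot)=\emptyset$ versus $U(\cdot)\ne\emptyset$, and dually for sinks), using Lemma \ref{Claim} to transfer ``$e=O(s(e))^-$'' type facts between $G$ and $\Gamma$ so that the position $e_v=O(v)^--1$ computed from $\prec_G$ agrees with the position of the corresponding added edge in $\prec$. This is largely a matter of invoking Lemma \ref{CPP-1}, Lemma \ref{Claim}, and the uniqueness in Proposition \ref{P1} rather than any new argument, so I would keep it brief, remark that both composites are the identity, and conclude that $\Phi$ and $\Psi$ are mutually inverse bijections between upward planar orders on $E(G)$ and (isomorphism classes of) \CPP extensions of $G$.
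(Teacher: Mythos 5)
Your proposal is correct and follows exactly the paper's route: the paper derives Theorem \ref{T2} as a direct consequence of Propositions \ref{P1} and \ref{P2}, with the two constructions serving as mutually inverse maps. You simply make explicit the bookkeeping (order-preservation giving $\Psi\circ\Phi=\mathrm{id}$, and the uniqueness clause of Proposition \ref{P1} giving $\Phi\circ\Psi=\mathrm{id}$ up to canonical isomorphism) that the paper leaves implicit.
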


\section{Justifying UPO-graph}

In this section, we will prove our main result, Theorem \ref{T3}, which shows that upward planar orders indeed characterizes upward planarity.

\begin{thm}\label{T3} Any \UPO graph has a unique upward planar drawing up to planar isotopy; and conversely, there is an upward planar order on $E(G)$ for any upward plane graph $G$.
\end{thm}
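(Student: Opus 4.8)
The plan is to prove the two directions separately, using the machinery of CPP-extensions and the already-established equivalence (Theorem \ref{T1}) between POP-graphs and progressive plane graphs.

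For the forward direction, let $(G,\prec)$ be a UPO-graph. By Proposition \ref{P1} there is a CPP-extension $\phi\colon G\rightarrow(\overline{G},\overline{\prec})$, and $(\overline{G},\overline{\prec})$ is a POP-graph. By Theorem \ref{T1} the POP-graph $(\overline{G},\overline{\prec})$ is realized by a progressive plane graph, i.e.\ a boxed, upward planar embedding of the progressive graph $\overline{G}$ in a plane box (drawn, say, downward). Restricting this embedding to the subgraph $\phi(G)\cong G$ gives a planar embedding of $G$ in which every edge is still monotone in the vertical direction; after deleting the added sources, sinks, and edges $\{e_v\}$ we obtain an upward (downward) planar drawing of $G$. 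Thus $G$ is upward planar. The only point requiring care is that the restriction of a planar embedding to a subgraph is again a planar embedding with the monotonicity preserved — this is immediate since we only erase arcs.

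For the converse, suppose $G$ is an upward planar graph. First, reduce to the planar $st$-graph case: by the Battista--Tamassia / Kelly characterization cited in the introduction, $G$ is a spanning subgraph of a planar $st$-graph $G'$; and a planar $st$-graph, after suppressing the degree-constraint issue, is equivalent to a progressive plane graph (as noted via \cite{[HLY16]}). More directly, I would argue: take an upward planar embedding of $G$, subdivide if necessary and add a dummy source below and a dummy sink above, connected appropriately, and "split" each source/sink of $G$ so that it becomes degree one, to produce a progressive plane graph $P$ containing $G$ as the subgraph on the progressive vertices. By Theorem \ref{T1}, $P$ corresponds to a POP-graph $(P,\prec_P)$. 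By Lemma \ref{restriction}, the induced order $\prec_G$ on $E(G)$ satisfies $(U_1)$ and $(U_3)$; by Lemma \ref{Claim} (or directly from the construction, since each original progressive vertex $v$ keeps $I(v)^+$ immediately followed by $O(v)^-$), $\prec_G$ also satisfies $(U_2)$. Hence $\prec_G$ is an upward planar order on $E(G)$.

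The main obstacle is the converse, specifically constructing the ambient progressive plane graph $P$ from an arbitrary upward planar $G$ in a way that makes $G$ sit inside $P$ exactly as the progressive part and makes the bookkeeping of Lemma \ref{Claim} apply. One must check that one can add the dummy edges at each source and each sink of $G$ within the given planar embedding without destroying planarity or upwardness — this is where the hypothesis that sources and sinks lie on the outer face (equivalently, the planar $st$-structure) is used, so that there is "room" on the correct side of each such vertex to attach the new pendant edge into a face. Once $P$ is available as a genuine progressive plane graph, everything else is a direct application of Theorem \ref{T1}, Lemma \ref{restriction}, and Lemma \ref{Claim}. A secondary subtlety is handling multiple sources and multiple sinks: one should attach them in a nested fashion consistent with the $U(v)$/$D(v)$ orders, exactly mirroring the local configurations $(1)$--$(4)$ of Fig.\ $5$, so that the resulting order on $E(P)$ restricts correctly.
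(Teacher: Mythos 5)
Your forward direction is exactly the paper's argument: Proposition \ref{P1} (equivalently Theorem \ref{T2}) produces the order-compatible \CPP extension $(\overline{G},\overline{\prec})$, Theorem \ref{T1} realizes it as a progressive plane graph, and restricting the drawing to $G$ preserves planarity and monotonicity. No issue there.

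The converse, however, has a genuine gap: everything you label ``the main obstacle'' is precisely the content of the paper's Proposition \ref{P3}, and neither of your two suggested routes fills it. Route (a), via the Battista--Tamassia/Kelly theorem, gives $G$ as a spanning subgraph of a planar $st$-graph, but the edges added in such a completion may be incident to arbitrary vertices of $G$; Lemma \ref{restriction} only makes $(U_1)$ and $(U_3)$ hereditary, and $(U_2)$ can fail for the restricted order (an added edge entering a progressive vertex $v$ can push $I(v)^+$ past intervening $G$-edges). This is exactly why the \CPP{}conditions $(E_3)$ and $(E_4)$ exist and why Lemma \ref{Claim} needs $(E_4)$; a generic $st$-completion provides no such guarantee, so ``apply Lemma \ref{restriction} and Lemma \ref{Claim}'' does not go through. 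Route (b) rests on the claim that there is ``room'' to attach a pendant edge because sources and sinks lie on the outer face, but for a general upward planar graph $G$ this is false: sources and sinks of $G$ may lie inside internal faces (it is only the $st$-graph one is trying to build that has its $s$ and $t$ on the outer boundary), and for such a vertex the new edge cannot be routed monotonically to the box boundary at all -- it must terminate at a suitable existing vertex of $G$ (the configurations $(2)$ and $(4)$ of Fig.\ $5$, i.e.\ $U(v)^-$ or $D(v)^-$). Showing that this attachment can always be made while keeping the embedding planar, upward, and compatible with $(E_3)$, $(E_4)$ is the nontrivial geometric induction in the paper's proof of Proposition \ref{P3} (eliminating sinks one by one via the horizontal strip, the path $e_0e_1\cdots e_r$ with $e_i=O(s(e_i))^+$, subcases $(1.1)$/$(1.2)$, and the auxiliary edge $e_0$ in case $(3)$), after which Theorem \ref{T1} and Proposition \ref{P2} finish the argument. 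Also, a minor but real confusion: in the extension the sources and sinks of $G$ are not ``split'' into degree-one vertices; they become progressive vertices of $\Gamma$ by attaching one new edge each, the degree-one endpoints being newly added vertices -- this matters because $(E_1)$ and Lemma \ref{CPP-1} are stated in those terms. As written, your converse is a plan for a proof rather than a proof.
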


The first part follows from Theorem \ref{T2} and Theorem \ref{T1}.  For the converse part, we need the following proposition, which is a geometric counterpart of Theorem \ref{P1}.
\begin{prop}\label{P3}
Let $G$ be an upward plane graph. Then there exists a PPG $\Gamma$ and a (geometric) embedding $\phi\colon G\rightarrow \Gamma$, such that

$(1)$ $\phi_0(V(G))=V(\Gamma)-(S(\Gamma)\sqcup T(\Gamma))$;

$(2)$ $|E(\Gamma)|=|E(G)|+|S(G)|+|T(G)|$;

$(3)$ for any $v\in S(G)$ and $w\in T(G)$,  $I(\phi_0(v))\cap O(\phi_0(w))= \emptyset$;

$(4)$  $e\in E(\Gamma)-(\phi_1(E(G))\cup I(\Gamma)\cup O(\Gamma))$ implies that either $O(s(e))^-<e < O(s(e))^+$ or $I(t(e))^-< e < I(t(e))^+$, where the linear orders are given by the polarization  of $\Gamma$.
\end{prop}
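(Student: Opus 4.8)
The plan is to mimic, at the geometric level, the inductive construction used in Proposition~\ref{P1}. Start from an upward planar drawing of $G$ inside a plane box, and enlarge the box vertically so that no vertex of $G$ lies on its horizontal boundaries. The only defect preventing this from being a progressive plane graph is that sources and sinks of $G$ need not have degree one, and they need not sit on the boundary. I would process the sources of $G$ one at a time (dually for the sinks). Fix a source $v$ of $G$; in the current drawing look at the outgoing edges $O(v)$, which emanate from $v$ in some cyclically-then-linearly ordered fashion dictated by the embedding. Two cases arise, exactly as in Lemma~\ref{L_1_ext}: either the region immediately counterclockwise-adjacent to the first outgoing edge of $v$ is the outer face (equivalently, no ``higher'' source sits above $v$ in the combinatorial sense captured by $U(v)=\emptyset$), in which case I attach a new pendant edge $e$ going from a new degree-one source $s(e)$ placed on the top boundary of the box down to $v$, drawn monotonically inside that face; or else there is a nearest source $w$ above $v$ whose outgoing ``fan'' contains $v$, and I route a new edge $e$ from $w$ down to $v$ through the face separating their fans. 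In either case the new edge can be drawn strictly monotone because $G$ was upward planar and the chosen face is a bigon-free region bounded by monotone arcs, so a monotone path through it exists.

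The key observation making this work is that each such move is precisely one of the four local configurations of Fig.~$5$ applied to the POP-graph obtained by reading off the planar order of the current progressive-ish drawing; after finitely many moves (one per original source plus one per original sink) we obtain a genuine progressive plane graph $\Gamma$: all its degree-one vertices lie on the horizontal boundary by construction, all former sources/sinks of $G$ have become progressive vertices of $\Gamma$, and no vertex of $G$ was deleted. This immediately gives (1). Each move adds exactly one edge and at most one vertex, and adds a vertex precisely when it does not reuse an existing source/sink of $\Gamma$; counting shows $|E(\Gamma)| = |E(G)| + |S(G)| + |T(G)|$, which is (2). Condition (3) holds because a newly added edge incident to a former source $v$ of $G$ is routed either to a new degree-one vertex or to another former source $w$ of $G$ (never to a former sink), and symmetrically for sinks; hence $I(\phi_0(v))$ and $O(\phi_0(w))$ are disjoint for $v \in S(G)$, $w \in T(G)$. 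Finally (4): a new edge $e$ that is not pendant in $\Gamma$ was inserted, in the source-case, into the fan of some former source $w = s(e)$, so in the polarization of $\Gamma$ it sits strictly between the first and last outgoing edges of $w$, i.e. $O(s(e))^- < e < O(s(e))^+$; dually for the sink-case one gets $I(t(e))^- < e < I(t(e))^+$.

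To carry this out rigorously I would: (a) set up the box and the enlargement; (b) define one elementary move and prove it produces an upward planar drawing with one more edge, by an explicit monotone-routing argument inside the relevant face; (c) show the move is well-defined, i.e. the ``nearest source above $v$'' and the face to route through are unambiguous given the planar embedding — here I can appeal to the combinatorial description of faces via the polarization induced by the embedding (Fig.~$3$), and to $U(v)$/$D(v)$ as defined for the associated upward planar order; (d) iterate and verify termination and properties (1)--(4) as above.

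The main obstacle I expect is step (b)--(c): making precise, in purely geometric terms, that the face into which the new edge must be routed is exactly the one singled out combinatorially (the ``$U(v)=\emptyset$ vs.\ $U(v)^-$'' dichotomy), and that a strictly monotone arc through that face exists and keeps the drawing planar. One clean way around this is to avoid redoing the geometry from scratch: by Theorem~\ref{T1} and Theorem~\ref{T2} an upward planar drawing of $G$ determines an upward planar order on its edge poset (after choosing, as in the first part of Theorem~\ref{T3}, the progressive structure), then apply Proposition~\ref{P1} to get the canonical CPP-extension $(\overline{G},\overline{\prec})$, and finally invoke Theorem~\ref{T1} to realize $(\overline{G},\overline{\prec})$ as a progressive plane graph $\Gamma$; properties (1)--(4) then transcribe directly from $(E_1)$--$(E_4)$ of the CPP-extension. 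The geometric argument sketched above is really only needed to guarantee that the initial upward planar drawing of $G$ does extend at all to some progressive plane graph in which $G$'s vertices are the progressive ones — equivalently, that $G$'s chosen progressive structure carries an upward planar order — after which Proposition~\ref{P1} and Theorem~\ref{T1} do the rest.
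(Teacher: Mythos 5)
Your proposed ``clean way around'' the geometry is circular. Proposition \ref{P3} is precisely the geometric input from which the paper deduces the converse half of Theorem \ref{T3}: only after one has the extension $\phi\colon G\rightarrow\Gamma$ with properties $(1)$--$(4)$ can one invoke Theorem \ref{T1} (which applies only to progressive plane graphs, not to a general upward planar $G$ whose sources and sinks have arbitrary degree) and Proposition \ref{P2} to conclude that a drawing of $G$ induces an upward planar order on $E(G)$. Proposition \ref{P1} and Theorem \ref{T2} both take an upward planar order (equivalently a CPP-extension) as input, so the step ``an upward planar drawing of $G$ determines an upward planar order on its edge poset'' assumes exactly what Proposition \ref{P3} exists to establish. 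The same circularity enters your geometric sketch wherever it appeals to $U(v)$, $D(v)$ or ``the associated upward planar order'': these are defined only for a UPO-graph, and $G$ is not yet known to carry such an order. Your closing remark that the geometry is ``only needed'' to show $G$ extends to some progressive plane graph with $G$'s vertices progressive is the same gap restated, and moreover mere existence of such an extension is not enough: condition $(4)$ (the counterpart of $(E_4)$) is what later forces $(U_2)$ on the restricted order.

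What remains is the geometric induction, which is indeed the paper's strategy (it eliminates the sinks of $G$ in order of decreasing height, then dually the sources), but the part you explicitly defer --- deciding, purely from the drawing, where the new monotone edge must end and why it can be drawn without crossings --- is the entire content of the paper's proof. The paper works with coordinates: for the sink being processed it sweeps a thin horizontal strip just below it, locates the bounding edge $e_0$ of the region containing the sink, follows the unique path $e_0e_1\cdots e_r$ with $e_i=O(s(e_i))^+$ down to the bottom vertex $v_n^+$, and then either terminates the new edge at a fresh vertex on the bottom boundary (when $e_i=I(t(e_i))^+$ for all $i$) or inserts it strictly inside the incoming fan of the first vertex $t(e_i)$ with $e_i<I(t(e_i))^+$; this dichotomy is exactly what secures $(4)$, and a separate case (the sink is the only point of its horizontal line meeting the graph) needs an auxiliary segment to a source below before the previous cases apply. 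Your dichotomy also misidentifies the attachment target --- the vertex whose outgoing fan encloses a source of $G$ need not itself be a source --- and your one-line justifications of $(3)$ and of the monotone routing (``a bigon-free region bounded by monotone arcs'') do not substitute for this case analysis. As it stands the proposal either defers or presupposes the substance of the proposition.
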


\begin{proof}
We want to extend $G$ into a PPG. Let $v_1,\cdots, v_n$ be an ordered list of $V(G)$,  with $(X_1,Y_1),\cdots,(X_n,Y_n)$ as their coordinates, such that $Y_1\geq\cdots \geq Y_n$. Then $G$ is contained in the box $D=[K-1,L+1]\times [Y_1+1,Y_n-1]$, where $K=min\{X_1,\cdots,X_n\},\ L=max\{X_1,\cdots,X_n\}$.

Assume $T(G)=\{ v_{\alpha_1},\cdots, v_{\alpha_\mu}\}$ with $1\leq\alpha_1<\cdots <\alpha_\mu\le n$. Clearly $v_n=v_{\alpha_\mu}$ is a sink. We will inductively eliminate all the sinks of $G$ by adding suitable new edges and vertices.

First, we add a vertex $v_n^+$ and an edge $h=[v_n,v_n^+]$ to $G$, where the coordinate of $v_n^+$ is $(X_n, Y_n-1)$ and  $h=[v_n,v_n^+]$ is the segment with  $s(h)=v_n$, $t(h)=v_n^+$. Denote the resulting upward plane graph as $G_1$.

Then we move to $v_{\alpha_{\mu-1}}$. If $Y_{\alpha_{\mu-1}}=Y_{\alpha_\mu}$, just as above, we add a vertex $v_{\alpha_{\mu-1}}^+$ and an edge $h_1=[v_{\alpha_{\mu-1}},v_{\alpha_{\mu-1}}^+]$ to $G_1$. Otherwise,  $Y_{\alpha_{\mu-1}}>Y_{\alpha_\mu}$. Then we consider the horizontal line $y= Y_{\alpha_{\mu-1}}$ and the set of its intersection points with $G_1$. There are three cases: $(1)$ there is an intersection point on the left of $v_{\alpha_{\mu-1}}$; (2) there is an intersection point on the right of $v_{\alpha_{\mu-1}}$; and (3) $v_{\alpha_{\mu-1}}$ is the unique intersection point of the line with $G_1$.

Case (1):  We consider the strip of the plane delimited by horizontal lines $y= Y_{\alpha_{\mu-1}}$ and $y= Y_{\alpha_{\mu-1}}-\varepsilon$, where $\varepsilon>0$ is small enough so that the strip contains no vertices in its interior, and the strip is divided by the edges of $G_1$ into (at least two) connected regions bounded by vertically monotonic curves.

Let $e_0$ be the edge of $G_1$ such that it is on the boundary of the region that contains $v_{\alpha_{\mu-1}}$ and on the left of $v_{\alpha_{\mu-1}}$ (the assumption in (1) guarantees the existence of $e_0$).
Then  either $e_0=h$, or there exists a unique directed path $e_0e_1e_2\cdots e_r$, such that $e_i=O(s(e_i))^+$ for all $1\le i\le r$ and $t(e_r)=v_n^+$ (equivalently, $e_r=h$), where the existence and the uniqueness of such path are guaranteed by the ordering of $T(G)$ and the requirement $e_i=O(s(e_i))^+$, respectively. Then we have two subcases.

Subcase $(1.1)$: For all $0\le i\le r$, $e_i=I(t(e_i))^+$ with respect to the polarization of $G$.

In this case, we add to $G_1$ a vertex $v_{\alpha_{\mu-1}}^+$  with vertical ordinate $Y_n-1$, on the right of and close enough to $v_n^+$; add an edge $h_1$ with $s(h_1)=v_{\alpha_{\mu-1}}$,  $t(h_1)=v_{\alpha_{\mu-1}}^+$, and draw it into $G_1$ as a monotonic curve on the right of and close enough to the directed path $e_0e_1\cdots e_r$; see Fig. $12$.

\begin{center}
\begin{center}
\begin{tikzpicture}[scale=0.7]

\node (v1) at (0,0) {};
\node (v2) at (6,0) {};
\node (v3) at (0,-1.5) {};
\node (v4) at (6,-1.5) {};
\node (v5) at (0,-7.5) {};
\node (v6) at (6,-7.5) {};
\draw  (v1) edge (v2);
\draw  (v3) edge (v4);
\draw  (v5) edge (v6);
\node (v7) at (1.5,-6) {};
\node [scale=0.6][left] at(1.5,-6) {$v_n$};
\draw[fill] (1.5,-6) circle [radius=0.07];
\node (v8) at (1.5,-7.5) {};
\node [scale=0.6][below] at(1.2,-7.5) {$v_n^+$};
\draw[fill] (1.5,-7.5) circle [radius=0.07];
\draw  (1.5,-6) -- (1.5,-7.5)[postaction={decorate, decoration={markings,mark=at position .5 with {\arrow[black]{stealth}}}}];
\node [scale=0.6][above]at (4.5,0) {$v_{\alpha_{\mu-1}}$};
\draw[fill] (4.5,0) circle [radius=0.07];
\node (v9) at (3.5,-2) {};
\node[scale=0.6] [left] at (3,-2.5) {$e_1$};
\draw[fill] (3.5,-2) circle [radius=0.07];

\node (v10) at (3,-3.5) {};
\node[scale=0.6] [left] at(2.3,-3.7) {$e_2$};
\draw[fill] (3,-3.5) circle [radius=0.07];

\draw  plot[smooth, tension=.7] coordinates {(2.5,1) (3.5,0.5) (3.5,0) (3,-0.5) (3,-1) (3,-1.5) (3.5,-2)}[postaction={decorate, decoration={markings,mark=at position .5 with {\arrow[black]{stealth}}}}];
\draw  plot[smooth, tension=.7] coordinates {(3.5,-2) (3.4,-2.6) (3.4,-3) (3,-3.5)}[postaction={decorate, decoration={markings,mark=at position .5 with {\arrow[black]{stealth}}}}];
\draw  plot[smooth, tension=.7] coordinates {(3,-3.5) (2.4,-4) (2.2,-4.2) (2,-4.6) (1.8,-5.2) (1.6,-5.8) (v7)}[postaction={decorate, decoration={markings,mark=at position .5 with {\arrow[black]{stealth}}}}];
\node [scale=0.6]at (7,0) {$Y_{\alpha_{\mu-1}}$};
\node[scale=0.6] at (7.4,-1.5) {$Y_{\alpha_{\mu-1}}-\varepsilon$};
\node [scale=0.6]at (7,-7.5) {$Y_n-1$};
\draw  plot[smooth, tension=.7] coordinates {(4.5,0)(3.4,-0.6) (3.4,-1.2) (3.8,-2.4) (3.4,-4) (2.6,-4.6) (2.2,-5.6) (2,-6.8) (2,-7.5)}[postaction={decorate, decoration={markings,mark=at position .5 with {\arrow[black]{stealth}}}}];
\draw[fill] (2,-7.5) circle [radius=0.07];

\node [below] at(2.6,-7.5) {$v_{\alpha_{\mu-1}}^+$};
\node [scale=0.6]at (2.5,-0.4) {$e_0$};
\node[scale=0.6][rotate=-20] at (1.5,-4.6) {$\vdots$};
\node [scale=0.6]at (0.5,-6.8) {$e_r=h$};

\node (v11) at (3,-1.8) {};
\node (v12) at (3,-2.2) {};
\node (v14) at (2.4,-3) {};
\node (v13) at (2.8,-3) {};
\node (v15) at (2.4,-3.8) {};
\draw (3,-1.8)-- (3.5,-2)[postaction={decorate, decoration={markings,mark=at position .5 with {\arrow[black]{stealth}}}}];
\draw  (3.5,-2) -- (3,-2.5)[postaction={decorate, decoration={markings,mark=at position .5 with {\arrow[black]{stealth}}}}];
\draw  (2.8,-3)-- (3,-3.5)[postaction={decorate, decoration={markings,mark=at position .5 with {\arrow[black]{stealth}}}}];
\draw  (2.4,-3)--(3,-3.5)[postaction={decorate, decoration={markings,mark=at position .5 with {\arrow[black]{stealth}}}}];
\draw(3,-3.5) -- (2.4,-3.8)[postaction={decorate, decoration={markings,mark=at position .5 with {\arrow[black]{stealth}}}}];

\node (v16) at (1.4,-5.4) {};
\node (v17) at (1,-5.4) {};
\draw   (1.4,-5.4) --(1.5,-6)[postaction={decorate, decoration={markings,mark=at position .5 with {\arrow[black]{stealth}}}}];
\draw   (1,-5.4)  -- (1.5,-6)[postaction={decorate, decoration={markings,mark=at position .5 with {\arrow[black]{stealth}}}}];

\node [scale=0.6]at (4,-3.6) {$h_1$};
\end{tikzpicture}

\end{center}
Figure  $12$. Drawing of $h_1$ in subcase $(1.1)$.
\end{center}

Subcase $(1.2)$: There exists some $i\in [0,\cdots, r-1]$ such that $e_i<I(t(e_i))^+$, and $e_j=I(t(e_j))^+$ for all $j<i$.

In this case, we  add to $G_1$ an edge $h_1$ with $s(h_1)=v_{\alpha_{\mu-1}}$, $t(h_1)=t(e_i)$, and draw it into $G_1$ as a monotonic curve on the right of and close enough to $e_0e_2\cdots e_i$; see Fig. $13$. Clearly, $e_i < h_1<I(t(e_i))^+$ with respect to the polarization of the resulting upward plane graph.

\begin{center}
\begin{tikzpicture}[scale=0.7]

\node (v1) at (-2.5,1) {};
\node (v2) at (5,1) {};
\draw  (v1) edge (v2);
\node (v19) at (2.5,1) {};
\node [scale=0.6][above]at (2.5,1) {$v_{\alpha_{\mu-1}}$};
\draw[fill] (2.5,1) circle [radius=0.07];
\node (v3) at (-2.5,-1) {};
\node (v4) at (5,-1) {};
\node [scale=0.6]at (6,1) {$Y_{\alpha_{\mu-1}}$};
\node[scale=0.6] at (6,-1) {$Y_{\alpha_{\mu-1}}-\varepsilon$};
\draw  (v3) edge (v4);
\node (v5) at (0.5,-0.5) {};
\draw[fill] (0.5,-0.5) circle [radius=0.07];
\node (v6) at (0,-2.5) {};
\draw[fill] (0,-2.5) circle [radius=0.07];
\node (v7) at (1.5,-5) {};
\draw[fill] (1.5,-5) circle [radius=0.07];

\draw  plot[smooth, tension=.7] coordinates {(1,1.5) (0.5,0.8) (0.4,0.2) (0.5,-0.5) (0.6,-1) (0.6,-1.4) (0.4,-1.9) (v6) (-0.1,-3) (0.1,-3.5) (0.4,-3.8) (0.7,-4) (1,-4.3) (1.2,-4.6) (v7) (1.8,-6.2)}[postaction={decorate, decoration={markings,mark=at position .15 with {\arrow[black]{stealth}}}}][postaction={decorate, decoration={markings,mark=at position .4 with {\arrow[black]{stealth}}}}][postaction={decorate, decoration={markings,mark=at position .7 with {\arrow[black]{stealth}}}}][postaction={decorate, decoration={markings,mark=at position .97 with {\arrow[black]{stealth}}}}];
\node (v8) at (0,-0.1) {};
\node (v9) at (-0.3,-0.2) {};
\draw  (v8) -- (0.5,-0.5)[postaction={decorate, decoration={markings,mark=at position .5 with {\arrow[black]{stealth}}}}];
\draw  (v9) -- (0.5,-0.5)[postaction={decorate, decoration={markings,mark=at position .5 with {\arrow[black]{stealth}}}}];
\node (v10) at (-0.1,-2) {};
\node (v11) at (-0.6,-2.2) {};
\draw  (v10) -- (0,-2.5)[postaction={decorate, decoration={markings,mark=at position .5 with {\arrow[black]{stealth}}}}];
\draw  (v11) -- (0,-2.5)[postaction={decorate, decoration={markings,mark=at position .5 with {\arrow[black]{stealth}}}}];
\node (v12) at (1.8,-4.4) {};
\node (v13) at (2.4,-4.5) {};
\draw  (v12) -- (1.5,-5)[postaction={decorate, decoration={markings,mark=at position .4 with {\arrow[black]{stealth}}}}];
\draw  (v13) -- (1.5,-5)[postaction={decorate, decoration={markings,mark=at position .5 with {\arrow[black]{stealth}}}}];
\node (v14) at (0.1,-0.9) {};
\node (v16) at (-0.7,-2.9) {};
\node (v15) at (-0.4,-3) {};
\node (v18) at (0.8,-5.6) {};
\node (v17) at (1.3,-5.6) {};
\node at (1.8,-5.6) {};
\draw  (0.5,-0.5) -- (v14)[postaction={decorate, decoration={markings,mark=at position .7 with {\arrow[black]{stealth}}}}];
\draw  (0,-2.5)-- (v15)[postaction={decorate, decoration={markings,mark=at position .7 with {\arrow[black]{stealth}}}}];
\draw  (0,-2.5)-- (v16)[postaction={decorate, decoration={markings,mark=at position .8 with {\arrow[black]{stealth}}}}];
\draw (1.5,-5) -- (v17)[postaction={decorate, decoration={markings,mark=at position .7 with {\arrow[black]{stealth}}}}];
\draw  (1.5,-5) -- (v18)[postaction={decorate, decoration={markings,mark=at position .7 with {\arrow[black]{stealth}}}}];
\draw  plot[smooth, tension=.7] coordinates {(v19) (1.6,0.7) (0.7,0) (0.7,-0.7) (0.8,-1.6) (0.4,-2.5) (0.2,-3.2) (0.7,-3.7) (1.3,-4.2) (1.5,-4.6) (v7)}[postaction={decorate, decoration={markings,mark=at position .5 with {\arrow[black]{stealth}}}}];
\node [scale=0.6]at (0,0.3) {$e_0$};
\node [scale=0.6]at (0.2,-1.4) {$e_1$};
\node [scale=0.6]at (-0.4,-3.3) {$e_2$};
\node [rotate=50] at (0.2,-4) {$\vdots$};
\node [scale=0.6]at (1,-4.8) {$e_i$};
\node [scale=0.6]at (2.1,-5.5) {$e_{i+1}$};
\node [scale=0.6]at (0.9,-2.1) {$h_1$};
\end{tikzpicture}

Figure $13$. Drawing of $h_1$ in case $(1.2)$.
\end{center}

Case (2):  This case is similar to Case (1), and we omit it here.

Case (3):  In this case, since $Y_{\alpha_{\mu-1}}>Y_{\alpha_\mu}$, so there must exist a source $v_k\in V(G)$ with $\alpha_{\mu-1}<k<n$ such that there is no edge of $G$ intersecting with the interior of the horizontal strip delimited by horizontal lines $y=Y_{\alpha_{\mu-1}}$ and $y=Y_k$. Take $e_0=[v_{\alpha_{\mu-1}},v_k]$ the segment with $s(e_0)=v_{\alpha_{\mu-1}}$ and $t(e_0)=v_k$. Then we reduce this case to the above case $(1)$ or case $(2)$ of the auxiliary upward plane graph $G_1'=G_1+ \{e_0\}$. Figure $14$ shows one possible subcase of case $(3)$, where $e_0$ is just an auxiliary edge for the drawing of $h_1$ and is not a really added edge of $G_1$.

\begin{center}
\begin{tikzpicture}[scale=0.7]

\node (v1) at (-2.5,1) {};
\node (v2) at (5,1) {};
\draw  (v1) edge (v2);
\node (v19) at (2.5,1) {};
\node [scale=0.6][above]at (1.6,1) {$v_{\alpha_{\mu-1}}$};
\draw[fill] (2.5,1) node (v6) {} circle [radius=0.07];
\node (v3) at (-2.5,-1) {};
\node (v4) at (5,-1) {};
\node [scale=0.6]at (6,1) {$Y_{\alpha_{\mu-1}}$};
\node [scale=0.6]at (6,-1) {$Y_k$};
\draw  (v3) edge (v4);

\node (v5) at (1.8,1.7) {};
\node (v7) at (2.4,1.8) {};
\node (v8) at (3.4,1.6) {};
\draw (v5) --(2.5,1)[scale=0.6,postaction={decorate, decoration={markings,mark=at position .4 with {\arrow[black]{stealth}}}}];
\draw  (v7) -- (2.5,1)[scale=0.6,postaction={decorate, decoration={markings,mark=at position .4 with {\arrow[black]{stealth}}}}];
\draw  (v8) -- (2.5,1)[scale=0.6,postaction={decorate, decoration={markings,mark=at position .4 with {\arrow[black]{stealth}}}}];
\node [scale=0.6] at (2.9,1.7) {$\cdots$};
\node at (0.1,-1) {};
\draw[fill](0.1,-1) node (v9) {}  circle [radius=0.07];

\draw  [dashed](v6)-- (v9)[scale=0.6,postaction={decorate, decoration={markings,mark=at position .45 with {\arrow[black]{stealth}}}}];
\node (v10) at (-2.5,0) {};
\node (v11) at (5,0) {};
\draw  (v10) edge (v11);
\node [scale=0.6]at (6,0) {$Y_{\alpha_{\mu-1}}-\varepsilon$};
\node [scale=0.6]at (-0.3,-0.7) {$v_k$};

\node (v12) at (-2.5,-4.5) {};
\node (v13) at (5,-4.5) {};
\draw  (v12) edge (v13);
\node[scale=0.6] at (6,-4.5) {$Y_n-1$};
\node[scale=0.6] at (1,0.2) {$e_0$};
\node at (0.8,-4.5) {};
\draw[fill] (0.8,-4.5) node (v17) {}circle [radius=0.07];
\node (v14) at (1,-1.8) {};
\draw[fill] (1,-1.8) circle [radius=0.07];
\node (v15) at (1.7,-2.8) {};
\draw[fill] (1.7,-2.8) circle [radius=0.07];
\node (v16) at (0.8,-4) {};
\draw[fill] (0.8,-4) circle [radius=0.07];
\draw  plot[smooth, tension=.7] coordinates {(0.1,-1)(0.5,-1.3) (1,-1.8) (v15) (v16)}[scale=0.6,postaction={decorate, decoration={markings,mark=at position .2 with {\arrow[black]{stealth}}}}][scale=0.6,postaction={decorate, decoration={markings,mark=at position .5 with {\arrow[black]{stealth}}}}][postaction={decorate, decoration={markings,mark=at position .8 with {\arrow[black]{stealth}}}}];
\draw  (0.8,-4) --((0.8,-4.5)[scale=0.6,postaction={decorate, decoration={markings,mark=at position .55 with {\arrow[black]{stealth}}}}];

\node [scale=0.6]at (0.3,-1.5) {$e_1$};
\node [scale=0.6]at (1.1,-2.3) {$e_2$};
\node [scale=0.6][rotate=50]at (1.2,-3.2) {$\cdots$};
\node[scale=0.6] at (0.3,-3.7) {$v_n$};
\node [scale=0.6]at (0.8,-4.9) {$v_n^+$};
\node[scale=0.6] at (0,-4.2) {$e_r=h$};
\node (v18) at (0.5,-1.7) {};
\node (v20) at (0.8,-2.2) {};
\draw  (v18)-- (1,-1.8)[scale=0.6,postaction={decorate, decoration={markings,mark=at position .5 with {\arrow[black]{stealth}}}}];
\draw (1,-1.8)-- (0.8,-2.2)[scale=0.6,postaction={decorate, decoration={markings,mark=at position .7 with {\arrow[black]{stealth}}}}];
\draw  plot[smooth, tension=.7] coordinates {(v6) (1.7,0.1) (0.5,-0.9) (1.1,-1.5) (1.8,-2.4) (2,-3.2) (1.5,-3.9) (1.3,-4.5)}[scale=0.6,postaction={decorate, decoration={markings,mark=at position .7 with {\arrow[black]{stealth}}}}];
\node at (1.3,-4.5) {};
\draw[fill]  (1.3,-4.5) circle [radius=0.07];
\node[scale=0.6] at (1.5,-4.9) {$v_{\alpha_{\mu-1}}^+$};
\node[scale=0.6] at (2.2,-3.3) {$h_1$};
\end{tikzpicture}

Figure $14$. One possible subcase of case $(3)$.
\end{center}

Repeating the above procedure successively for $v_{\alpha_{\mu-2}},v_{\alpha_{\mu-3}},\cdots, v_{\alpha_1}$, we can eliminate all the sinks. Similarly, we can eliminate all the sources. As a result, we get a PPG $\Gamma$ boxed in $D$ and  with $G$ as a subgraph. By the construction the resulting embedding $\psi\colon G\rightarrow \Gamma$ satisfies all the required conditions listed in the proposition. The proof is completed.
\end{proof}

By Theorem \ref{T1}, the (geometric) embedding $\phi\colon G\rightarrow \Gamma$ in Proposition \ref{P3} induces a \CPP extension of $G$, which, by Proposition \ref{P2}, implies the converse part of Theorem \ref{T3}.
\begin{rem}
Note that the extension of $G$ in Proposition \ref{P3} is not unique, so the upward planar order on $E(G)$ is not necessarily unique.
\end{rem}

\begin{rem}\label{Re2}
Note that if $(G,\prec)$ is an anchored UPO-graph, then for any $v\in S(G)$, $U(v)=\emptyset$, and for any $v\in T(G)$, $D(v)=\emptyset$. By the construction in Theorem \ref{P1}, the CPP extension $\phi: G\rightarrow (\overline{G},\overline{\prec})$ satisfies $E(\overline{G})-E(G)=I(\overline{G})\sqcup O(\overline{G})$. Therefore, in the geometric realization of $(G,\prec)$, all sources and sinks of $G$ are drawn on the boundary of the external face.
\end{rem}

Using a fundamental result independently due to F\'{a}ry \cite{[Fa48]} and Wagner \cite{[Wa36]}, we may obtain a combinatorial characterization of (non-directed) planar graphs.

\begin{cor} A (non-directed) graph $G$ has a planar drawing if and only if there exists an orientation on $G$ and an upward planar order on $E(G)$ with respect to the orientation.
\end{cor}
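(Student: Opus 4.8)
The statement to establish is the Corollary: a (non-directed) graph $G$ has a planar drawing if and only if there exists an orientation on $G$ together with an upward planar order on $E(G)$ with respect to that orientation. The plan is to reduce everything to Theorem \ref{T3}, which already handles the directed case, and to absorb the loss of directedness using the F\'{a}ry--Wagner theorem that every planar graph admits a straight-line planar drawing. So the argument splits into the two implications and is essentially a matter of converting between "planar drawing of an undirected graph" and "upward planar drawing of a suitably oriented directed graph."

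\emph{Necessity (planar $\Rightarrow$ orientation $+$ upward planar order).} Suppose $G$ has a planar drawing. By F\'{a}ry--Wagner, $G$ has a straight-line planar drawing; I would fix such a drawing and, if necessary, perturb the vertices slightly (a generic rotation of the plane) so that all vertices have pairwise distinct $y$-coordinates — this keeps the drawing planar and straight-line. Now orient each edge from its lower endpoint to its higher endpoint. Since every edge is a straight segment joining two points with distinct heights, every edge is strictly monotone in the vertical direction, so this is an upward planar drawing of the resulting directed graph $\vec{G}$; acyclicity is automatic because the orientation is induced by a height function. Then $\vec{G}$ is an upward planar graph, and by the converse part of Theorem \ref{T3} there is an upward planar order on $E(\vec{G}) = E(G)$ with respect to this orientation.

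\emph{Sufficiency (orientation $+$ upward planar order $\Rightarrow$ planar).} Conversely, suppose $G$ carries an orientation $\vec{G}$ and an upward planar order $\prec$ on $E(G)$ with respect to $\vec{G}$. By the first part of Theorem \ref{T3}, $(\vec{G},\prec)$ has an upward planar drawing; in particular this is a planar drawing of the underlying graph of $\vec{G}$, which is $G$. Hence $G$ is planar. Combining the two implications proves the Corollary.

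\emph{Main obstacle.} The only non-formal point is the necessity direction: I must make sure that after invoking F\'{a}ry--Wagner I can genuinely produce an \emph{upward} planar drawing, i.e. that the straight-line embedding can be placed in "general position" so that no edge is horizontal and the induced orientation is acyclic. The acyclicity is free (it comes from a linear height function on the vertices), and distinctness of vertex heights is achieved by a generic infinitesimal rotation, which preserves straightness and planarity; so this obstacle is mild. A secondary point worth a sentence is that Theorem \ref{T3} is stated for directed graphs, so one should note explicitly that an upward planar drawing of $\vec{G}$, after forgetting edge directions, is exactly a planar drawing of $G$ — no multi-edge or self-loop subtleties arise since $G$ is a (simple) graph and the orientation does not identify distinct edges.
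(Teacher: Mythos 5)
Your argument is essentially the paper's own: apply the F\'ary--Wagner theorem to get a straight-line drawing, rotate generically so that all vertices lie at distinct heights, orient every edge upward (acyclicity being forced by the height function), and then invoke Theorem \ref{T3} in one direction for the necessity and in the other direction for the (trivial) sufficiency. The one place you diverge is the parenthetical ``no multi-edge or self-loop subtleties arise since $G$ is a (simple) graph'': the paper does not assume $G$ simple (its running examples, e.g.\ Figures 2 and 4, have parallel edges), and F\'ary--Wagner only applies to simple graphs, so the paper first proves the claim for the associated simple graph and then closes the argument with the observation that $G$ is planar if and only if its associated simple graph is. If the corollary is read for simple graphs your proof is complete; to match the paper's generality you would need that one extra reduction step (and a word on reinstating parallel edges as monotone curves near the straight segments before applying Theorem \ref{T3}).
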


\begin{proof} We need only to show the ``only if'' part, and the other direction is obvious.

We first claim that any simple planar graph $\Gamma$ has an upward drawing. By F\'ary-Wagner theorem, there exists a planar drawing of $\Gamma$ such that all edges are straight line segment. We may rotate the plane an appropriate angle, so that any horizontal line contains at most one vertex of $\Gamma$. This can be done since $\Gamma$ has only finitely many vertices, and hence only finitely many straight lines will contain more than one vertices. Then there exists a (unique) orientation of $\Gamma$ making the resulting drawing an upward planar drawing.

Now the proof follows from the easy fact that a graph
$G$ has a planar drawing if and only if the associated simple graph of $G$ has a planar drawing.
\end{proof}

\section*{Acknowledgement}
This work is partly supported by the National Scientific Foundation of China No.11431010 and 11571329 and  "the Fundamental Research Funds for the Central Universities".

\par


\textbf{Xuexing Lu}\hfill \\  Email: xxlu@mail.ustc.edu.cn

\textbf{Yu Ye} \hfill \\ Email: yeyu@ustc.edu.cn

\end{document}